\newcommand{\IfarXivElse}[2]{
    \ifbool{arXivFormat}
        {#1}{#2}
    }
\newtheorem{theorem}{Theorem}[section]
\newtheorem{lemma}[theorem]{Lemma}
\newtheorem{prop}[theorem]{Proposition}
\newtheorem{cor}[theorem]{Corollary}
\newtheorem{conj}[theorem]{Conjecture}
\theoremstyle{definition}
\newtheorem{remark}[theorem]{Remark}
\newtheorem*{remark*}{Remark}     
\numberwithin{equation}{section}
\newcommand{\abs}[1]{\left\vert#1\right\vert}
\newcommand{\innp}[1]{\ensuremath{\left< #1 \right>}}
\newcommand{\BoldTau}
    {\mbox{\boldmath $\tau$}}
\newcommand{\BB}[1]{\ensuremath{\mathbb{#1}}}
\newcommand{\R}{\ensuremath{\BB{R}}} %
\newcommand{\iny}{\ensuremath{\infty}}
\newcommand{\grad}{\ensuremath{\nabla}}
\DeclareMathOperator{\dv}{div} %
\DeclareMathOperator{\curl}{curl} %
\DeclareMathOperator{\dist}{dist} %
\DeclareMathOperator{\supp}{supp} %
\DeclareMathOperator{\weak}{weak} %
\newcommand{\wh}{\ensuremath{\widehat}}
\newcommand{\prt}{\ensuremath{\partial}}
\newcommand{\brac}[1]{\ensuremath{\left[ #1 \right]}}
\newcommand{\pr}[1]{\ensuremath{\left( #1 \right) }}
\newcommand{\set}[1]{\ensuremath{\left\{ #1 \right\}}}
\newcommand{\norm}[1]{\ensuremath{\left\Vert #1 \right\Vert}}
\newcommand{\smallnorm}[1]{\ensuremath{\Vert #1 \Vert}}
\newcommand{\refS}[1]{Section~\ref{S:#1}}
\newcommand{\refT}[1]{Theorem~\ref{T:#1}}
\newcommand{\refTAnd}[2]{Theorems~\ref{T:#1} and \ref{T:#2}}
\newcommand{\refL}[1]{Lemma~\ref{L:#1}}
\newcommand{\refC}[1]{Corollary~\ref{C:#1}}
\newcommand{\refE}[1]{Equation~(\ref{e:#1})}
\newcommand{\refR}[1]{Remark~\ref{R:#1}}
\newcommand{\eps}{\ensuremath{\epsilon}}
\newcommand{\Cal}[1]{\ensuremath{\mathcal{#1}}}
\newcommand{\al}{\ensuremath{\alpha}}
\newcommand{\pdx}[2]{\frac{\prt #1}{\prt #2}}
\newcommand{\diff}[2]{\frac{ d#1}{d#2}}
\newcommand{\ol}{\overline}
\newcommand{\n}{\bm{n}}
\newcommand{\ToDo}[1]{
		{\textbf{[#1]}}}
\newcommand{\Example}[1]{\bigskip\noindent\textbf{Example #1}: }
\newcommand{\ReturnExample}[1]{\bigskip\noindent\textbf{Return to Example #1}: }
\renewcommand{\matrix}[2]{\begin{pmatrix} #1 \\ #2 \end{pmatrix}}
\newcommand{\tmatrix}[3]{\begin{pmatrix} #1 \\ #2 \\ #3 \end{pmatrix}}
\crefname{cor}{Corollary}{Corollaries} 
\crefname{lemma}{Lemma}{Lemmas}	       
\crefname{section}{Section}{Sections}
\Crefname{section}{Section}{Sections}
\crefname{appendix}{Appendix}{Appendices}
\Crefname{appendix}{Appendix}{Appendices}
\crefname{theorem}{Theorem}{Theorems}
\Crefname{theorem}{Theorem}{Theorems}
\crefname{prop}{Proposition}{Propositions}
\Crefname{prop}{Proposition}{Propositions}
\crefname{conj}{Conjecture}{Conjectures}
\Crefname{conj}{Conjecture}{Conjectures}
\crefname{definition}{Definition}{Definitions}
\Crefname{definition}{Definition}{Definitions}
\crefname{remark}{Remark}{Remarks}
\Crefname{remark}{Remark}{Remarks}
\crefname{assumption}{Assumption}{Assumptions}
\Crefname{assumption}{Assumption}{Assumptions}
\crefname{conj}{Conjecture}{Conjectures}
\Crefname{conj}{Conjecture}{Conjectures}
\newcommand{\Part}[1]
{
\bigskip
\begin{center}
\textbf{#1}
\phantomsection   
\addcontentsline{toc}{chapter}{#1}
\end{center}
}
\numberwithin{equation}{section}
\newcommand{\Ignore}[1]{}
\newcommand{\Holder}
    {H\"{o}lder }
\newcommand{\Holders}
    {H\"{o}lder's }
\renewcommand{\matrix}[2]{\begin{pmatrix} #1 \\ #2 \end{pmatrix}}
\begin{document}

\newdateformat{mydate}{\THEDAY~\monthname~\THEYEAR}

\title
    [Vanishing viscosity: observations]
    {Observations on the vanishing viscosity limit}

\author{James P. Kelliher}
\address{Department of Mathematics, University of California, Riverside, 900 University Ave.,
Riverside, CA 92521}
\curraddr{Department of Mathematics, University of California, Riverside, 900 University Ave.,
Riverside, CA 92521}
\email{kelliher@math.ucr.edu}

\subjclass[2010]{Primary 76D05, 76B99, 76D10} 


\keywords{Vanishing viscosity, boundary layer theory}

\begin{abstract}
Whether, in the presence of a boundary, solutions of the Navier-Stokes equations converge to a solution to the Euler equations in the vanishing viscosity limit is unknown. In a seminal 1983 paper, Tosio Kato showed that the vanishing viscosity limit is equivalent to having sufficient control of the gradient of the Navier-Stokes velocity in a boundary layer of width proportional to the viscosity. In a 2008 paper, the present author showed that the vanishing viscosity limit is equivalent to the formation of a vortex sheet on the boundary. We present here several observations that follow on from these two papers.
\Ignore{ 
We make several observations regarding the vanishing viscosity limit, primarily regarding the control of the total mass of vorticity and the conditions in Tosio Kato's seminal 1983 paper \cite{Kato1983} shown by him to be equivalent to the vanishing viscosity limit.
} 
\end{abstract}

\date{(compiled on {\dayofweekname{\day}{\month}{\year} \mydate\today)}}

\maketitle

\vspace{-1.5em}

\begin{small}
    \begin{flushright}
        Compiled on \textit{\textbf{\dayofweekname{\day}{\month}{\year} \mydate\today}}
    \end{flushright}
\end{small}

\vspace{-1.5em}

\renewcommand\contentsname{}   
\begin{small}
    \tableofcontents
\end{small}


\noindent 
The Navier-Stokes equations for a viscous incompressible fluid in a domain $\Omega \subseteq \R^d$, $d \ge 2$, with no-slip boundary conditions can be written,
\begin{align*}
    (NS)
    \left\{
        \begin{array}{rl}
            \prt_t u + u \cdot \grad u + \grad p = \nu \Delta u + f
                &\text{ in } \Omega, \\
            \dv u = 0
                &\text{ in } \Omega, \\
            u = 0
                &\text{ on } \Gamma := \prt \Omega.
        \end{array}  
    \right.
\end{align*}
The Euler equations modeling inviscid incompressible flow on such a domain with no-penetration boundary conditions can be written,
\begin{align*}
    (EE)
    \left\{
        \begin{array}{rl}
            \prt_t \ol{u} + \ol{u} \cdot \grad \ol{u} + \grad p = \nu \Delta \ol{u} + \ol{f}
                &\text{ in } \Omega, \\
            \dv \ol{u} = 0
                &\text{ in } \Omega, \\
            \ol{u} \cdot \n = 0
                &\text{ on } \Gamma.
        \end{array}  
    \right.
\end{align*}
Here, $u = u_\nu$ and $\ol{u}$ are velocity fields, while $p$ and $\ol{p}$ are pressure (scalar) fields. The external forces, $f$, $\ol{f}$, are vector fields. (We adopt here the notation of Kato in \cite{Kato1983}.)

We assume throughout that $\Omega$ is bounded and $\Gamma$ has $C^2$ regularity, and write $\n$ for the outward unit normal vector.

The limit,
\begin{align*}
    (VV) \qquad
        u \to \ol{u} \text{ in } L^\iny(0, T; L^2(\Omega)),
\end{align*}
we refer to as the \textit{classical vanishing viscosity limit}. Whether it holds in general, or fails in any one instance, is a major open problem in mathematical fluids mechanics.

In \cite{K2006Kato, K2008VVV} a number of conditions on the solution $u$ were shown to be equivalent to ($VV$). 
The focus in \cite{K2006Kato} was on the size of the vorticity or velocity in a layer near the boundary, while the focus in \cite{K2008VVV} was on the accumulation of vorticity on the boundary. The work we present here is in many ways a follow-on to \cite{K2006Kato, K2008VVV}, each of which, especially \cite{K2006Kato}, was itself an outgrowth of Tosio Kato's seminal paper \cite{Kato1983} on the vanishing viscosity limit, ($VV$). 

This paper is divided into two themes. The first theme concerns the accumulation of vorticity---on the boundary, in a boundary layer, or in the bulk of the fluid. It explores the consequences of having control of the total mass of vorticity or, more strongly, the $L^1$-norm of the vorticity for solutions to ($NS$).

We re-express in a specifically 3D form the condition for vorticity accumulation on the boundary from \cite{K2008VVV} in \cref{S:3DVersion}.
In \cref{S:LpNormsBlowUp}, we show that if ($VV$) holds then the $L^p$ norms of the vorticity for solutions to ($NS$) must blow up for all $p > 1$ as $\nu \to 0$ except in very special circumstances. This leaves only the possibility of control of the vorticity's $L^1$ norm. Assuming such control, we show in \cref{S:ImprovedConvergence} that when ($VV$) holds we can characterize the accumulation of vorticity on the boundary more strongly than in \cite{K2008VVV}.

In \cref{S:BoundaryLayerWidth}, we show that if we measure the width of the boundary layer by the size of the $L^1$-norm of the vorticity then the layer has to be wider than that of Kato if ($VV$) holds. We push this analysis further in \cref{S:OptimalConvergenceRate} to obtain the theoretically optimal convergence rate when the initial vorticity has nonzero total mass, as is generic for non-compatible initial data. We turn a related observation into a conjecture concerning the connection between the vanishing viscosity limit and the applicability of the Prandtl theory.

In \cref{S:SomeConvergence}, we show that the arguments in \cite{K2008VVV} lead to the conclusion that some kind of convergence of a subsequence of the solutions to ($NS$) always occurs in the limit as $\nu \to 0$, but not necessarily to a solution to the Euler equations.
 
The second theme more directly addresses Tosio Kato's conditions from  \cite{Kato1983} that are equivalent to ($VV$). We also deal with the closely related condition from \cite{K2006Kato} that uses vorticity in place of the gradient of the velocity that appears in one of Kato's conditions.

We derive in \cref{S:EquivCondition} a condition on the solution to ($NS$) on the boundary that is equivalent in 2D to ($VV$), giving a number of examples to which this condition applies in \cref{S:Examples}.

In \cref{S:BardosTiti} we discuss some interesting recent results of Bardos and Titi that they developed using dissipative solutions to the Euler Equations. We show how weaker, though still useful, 2D versions of these results can be obtained using direct elementary methods. 

We start, however, in \cref{S:Background} with the notation and definitions we will need, and a summary of the pertinent results of \cite{K2006Kato, K2008VVV, Kato1983}.

%
%
\section{Definitions and past results}\label{S:Background}

\noindent
We define the classical function spaces of incompressible fluids,
\begin{align*}
    H &= \set{u \in (L^2(\Omega))^d: \dv u = 0 \text{ in } \Omega, \,
                u \cdot \mathbf{n} = 0 \text{ on } \Gamma}
\end{align*}
with the $L^2$-norm and
\begin{align*}                
    V &= \set{u \in (H_0^1(\Omega))^d: \dv u = 0 \text{ in } \Omega}
\end{align*}
with the $H^1$-norm. We denote the $L^2$ or $H$ inner product by $(\cdot, \cdot)$. If $v$, $w$ are vector fields then $(v, w) = (v^i, w^i)$, where we use here and below the common convention of summing over repeated indices. Similarly, if $M$, $N$ are matrices of the same dimensions then $M \cdot N = M^{ij} N^{ij}$ and
\begin{align*}
   (M, N)
       = (M^{ij}, N^{ij})
       = \int_\Omega M \cdot N.
\end{align*}

We will assume that $u$ and $\ol{u}$ satisfy the same initial conditions,
\begin{align*}
    u(0) = u_0, \quad \ol{u}(0) = u_0,
\end{align*}
and that $u_0$ is in $C^{k + \eps}(\Omega) \cap H$, $\eps > 0$, where $k =
1$ for two dimensions and $k = 2$ for 3 and higher dimensions, and that
$f = \ol{f} \in C^1_{loc}(\R; C^1(\Omega))$. Then as shown in
\cite{Koch2002} (Theorem 1 and the remarks on p. 508-509), there is some $T
> 0$ for which there exists a unique solution,
\begin{align}\label{e:ubarSmoothness}
    \ol{u}
        \text{ in } C^1([0, T]; C^{k + \eps}(\Omega)),
\end{align}
to ($EE$). In two dimensions, $T$ can be arbitrarily large, though it is only known that
some positive $T$ exists in three and higher dimensions.

With such initial velocities, we are assured that there are weak solutions to $(NS)$, unique in 2D. Uniqueness of these weak solutions is not known in three and higher dimensions, so by $u = u_\nu$ we mean any of these solutions chosen arbitrarily. We never employ strong or classical solutions to $(NS)$.

\Ignore{ 
It follows, assuming that $f$ is in $L^1([0, T]; L^2(\Omega))$, that for such solutions,
\begin{align}\label{e:NSVariationalIdentity}
    \begin{split}
        &(u(t), \phi(t)) - (u(0), \phi(0)) \\
            &\qquad= \int_0^t \brac{(u, u \cdot \grad \phi)
                - \nu (\grad u, \grad
                    \phi) + (f, \phi) + (u, \prt_t \phi))} \, dt
   \end{split}
\end{align}
for all $\phi$ in $C^1([0, T] \times \Omega) \cap C^1([0, T]; V)$. 
} 

We define $\gamma_\mathbf{n}$ to be the boundary trace operator for the normal component of a vector field in $H$ and write
\begin{align}\label{e:RadonMeasures}
    \Cal{M}(\ol{\Omega}) \text{ for the space of Radon measures on } \ol{\Omega}.
\end{align}
That is, $\Cal{M}(\ol{\Omega})$ is the dual space of $C(\ol{\Omega})$. We let $\mu$ in $\Cal{M}(\ol{\Omega})$ be the measure supported on $\Gamma$ for which  $\mu\vert_\Gamma$ corresponds to Lebesgue measure on $\Gamma$ (arc length for $d = 2$, area for $d = 3$). Then $\mu$ is also a member of $H^1(\Omega)^*$, the dual space of $H^1(\Omega)$.

We define the vorticity $\omega(u)$ to be the $d \times d$ antisymmetric matrix,
\begin{align}\label{e:VorticityRd}
    \omega(u) = \frac{1}{2}\brac{\grad u - (\grad u)^T},
\end{align}
where $\grad u$ is the Jacobian matrix for $u$: $(\grad u)^{ij} = \prt_j u^i$.
When working specifically in two dimensions, we alternately define the vorticity as the scalar curl of $u$: 
\begin{align}\label{e:VorticityR2}
	\omega(u) = \prt_1 u^2 - \prt_2 u^1.
\end{align}

Letting $\omega = \omega(u)$ and $\ol{\omega} = \omega(\ol{u})$, we define the following conditions:
\begingroup
\allowdisplaybreaks
\begin{align*}
	(A) & \qquad u \to \ol{u} \text{ weakly in } H
					\text{ uniformly on } [0, T], \\
	(A') & \qquad u \to \ol{u} \text{ weakly in } (L^2(\Omega))^d
					\text{ uniformly on } [0, T], \\
	(B) & \qquad u \to \ol{u} \text{ in } L^\iny([0, T]; H), \\
	(C) & \qquad \grad u \to \grad \ol{u} - \innp{\gamma_\mathbf{n} \cdot, \ol{u} \mu} 
				\text{ in } ((H^1(\Omega))^{d \times d})^*
					   \text{ uniformly on } [0, T], \\
	(D) & \qquad \grad u \to \grad \ol{u} \text{ in } (H^{-1}(\Omega))^{d \times d}
					   \text{ uniformly on } [0, T], \\
	(E) & \qquad \omega \to \ol{\omega}
					- \frac{1}{2} \innp{\gamma_\mathbf{n} (\cdot - \cdot^T),
								\ol{u} \mu}
					\text{ in } 
	 				((H^1(\Omega))^{d \times d})^*
	 				   \text{ uniformly on } [0, T], \\
	(F) & \qquad \omega \to \ol{\omega}
					\text{ in } 
	 				 (H^{-1}(\Omega))^{d \times d}
					   \text{ uniformly on } [0, T].
\end{align*}
\endgroup
We stress that $(H^1(\Omega))^*$ is the dual space of $H^1(\Omega)$, in contrast to $H^{-1}(\Omega)$, which is the dual space of $H^1_0(\Omega)$.

The condition in $(B)$ is the classical vanishing viscosity limit of ($VV$).

We will make the most use of condition $(E)$, which more explicitly means that
\begin{align}\label{e:EExplicit}
	(\omega(t), M)
		\to (\ol{\omega}(t), M) - \frac{1}{2}\int_{\Gamma}
			((M - M^T) \cdot \mathbf{n}) \cdot \ol{u}(t)
				\text{ in } L^\iny([0, T])
\end{align}
for any $M$ in $(H^1(\Omega))^{d \times d}$.

In two dimensions, defining the vorticity as in \refE{VorticityR2}, we also define the following two conditions:
\begin{align*}
	(E_2) & \qquad \omega \to \ol{\omega} - (\ol{u} \cdot \BoldTau) \mu 
				\text{ in } (H^1(\Omega))^*
					   \text{ uniformly on } [0, T], \\
	(F_2) & \qquad \omega \to \ol{\omega} \text{ in } H^{-1}(\Omega)
					   \text{ uniformly on } [0, T].
\end{align*}
Here, $\BoldTau$ is the unit tangent vector on $\Gamma$ that is obtained by rotating the outward unit normal vector $\mathbf{n}$ counterclockwise by $90$ degrees.

\Ignore{ 
Condition ($E_2$) means that
\begin{align*}
	(\omega(t), f)
		\to (\ol{\omega}(t), f) - \int_{\Gamma} (\ol{u}(t) \cdot \BoldTau) f
			\text{ in } L^\iny([0, T])
\end{align*}
for any $f$ in $H^1(\Omega)$.
} 

\refT{VVEquiv} is proved in \cite{K2008VVV} ($(A) \implies (B)$ having been proved in \cite{Kato1983}), to which we refer the reader for more details.

\begin{theorem}[\cite{K2008VVV}]\label{T:VVEquiv}
	Conditions ($A$), ($A'$), ($B$), ($C$), ($D$), and ($E$) are equivalent
	(and each implies condition ($F$)).
	In two dimensions, condition ($E_2$) and, when $\Omega$ is simply connected, ($F_2$)
	are equivalent to the other conditions.\footnote{The restriction that $\Omega$ be
	simply connected for the equivalence of ($F_2$) was not, but should
	have been, in the published version of \cite{K2008VVV}.}
\end{theorem}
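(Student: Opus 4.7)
The plan is to establish the equivalence of $(A), (A'), (B), (C), (D), (E)$ by a chain of implications anchored on Kato's result $(A) \Rightarrow (B)$. Several directions are essentially immediate: $(B) \Rightarrow (A)$ since strong $L^2$-convergence gives weak convergence in $H$; $(A) \Leftrightarrow (A')$ via Helmholtz decomposition, as the gradient part of any $L^2$ test field pairs trivially with the divergence-free $u, \ol{u} \in H$; $(C) \Rightarrow (D)$ and $(E) \Rightarrow (F)$ by restricting test matrix fields from $(H^1(\Omega))^{d\times d}$ to $(H^1_0(\Omega))^{d\times d}$, which annihilates the boundary traces; and $(D) \Rightarrow (F)$ directly from the definition of $\omega$ as the antisymmetric part of $\nabla u$. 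What remain substantive are the forward arrows $(B) \Rightarrow (C), (E)$ and the reverse $(C)$ (or $(E)$) $\Rightarrow (A')$.

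The forward direction is integration by parts. For $M \in (H^1(\Omega))^{d \times d}$,
\[
    (\nabla u - \nabla \ol{u}, M) = -(u - \ol{u}, \dv M) + \int_\Gamma (u - \ol{u}) \cdot M \mathbf{n}.
\]
Using $u|_\Gamma = 0$ collapses the boundary term to $-\int_\Gamma \ol{u} \cdot M \mathbf{n} = -\innp{\gamma_{\mathbf{n}} M, \ol{u}\mu}$, and under $(B)$ the interior term vanishes uniformly in $t$ by Cauchy--Schwarz, giving $(C)$. Antisymmetrizing $M$, via the identity $(\omega, M) = \tfrac{1}{2}(\nabla u, M - M^T)$, converts this immediately to $(E)$. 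For the reverse direction, read the identity backwards: $(C)$ says exactly that $(u - \ol{u}, \dv M) \to 0$ uniformly in $t$ for every $M \in (H^1)^{d \times d}$. Given any $v \in H^1(\Omega)^d$, an explicit choice realizes $\dv M = v$, e.g.\ $M^{i1}(x) := \int_0^{x_1} v^i(s, x_2, \ldots, x_d)\, ds$ and $M^{ij} = 0$ for $j \neq 1$, so $(u - \ol{u}, v) \to 0$ uniformly for all $v$ in the dense subspace $H^1 \subset L^2$. Combined with the uniform-in-$\nu$ energy bound on $\|u\|_{L^\infty_t L^2}$, this upgrades to weak $L^2$-convergence uniformly in $t$, which is $(A')$. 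The implication $(E) \Rightarrow (A')$ is parallel: restricting to antisymmetric $M$ forces $\dv M$ to be a curl in 3D (or $\nabla^\perp m$ in 2D), ranging over a dense subspace of $H$, while the complementary gradient part of any $L^2$ test field pairs trivially with $u - \ol{u} \in H$. Composing with Kato's theorem closes the loop.

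For the 2D scalar conditions, $(E) \Leftrightarrow (E_2)$ reduces to identifying the antisymmetric $2 \times 2$ matrix $\omega(u)$ with its one scalar entry, together with the boundary conversion $\tfrac{1}{2}(M - M^T) \mathbf{n} \cdot \ol{u} = m (\ol{u} \cdot \BoldTau)$ that passes from matrix to scalar test functions. For $(F_2) \Rightarrow (A')$ with $\Omega$ simply connected, the Biot--Savart representation $u = \nabla^\perp \psi$, $-\Delta \psi = \omega$, $\psi|_\Gamma = 0$, gives a bounded isomorphism $H^{-1}(\Omega) \to H$, transferring convergence of vorticity directly to convergence of velocity. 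In a multiply connected $\Omega$, harmonic Hodge fields with prescribed circulations around the holes lie in the kernel of the vorticity map but contribute to $u - \ol{u}$ in $L^2$, which is why $(F_2)$ alone no longer suffices — precisely the gap flagged in the footnote. The hardest step is the reverse $(C)$ (or $(E)$) $\Rightarrow (A')$: one must ensure the realizable $\dv M$'s exhaust a dense test class, and in the antisymmetric case this forces a detour through the Hodge-type surjectivity $\curl: (H^1)^d \to H$ (respectively $\nabla^\perp: H^1 \to H$ in 2D).
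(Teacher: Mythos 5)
Your chain never closes condition $(D)$ back into the equivalence class, and that is precisely where the one nontrivial functional-analytic ingredient of the proof lives. You derive $(C) \Rightarrow (D)$ and $(D) \Rightarrow (F)$, but $(F)$ is not asserted to be equivalent to the others, so as written $(D)$ could be strictly weaker than $(A)$--$(E)$. The paper's argument (from \cite{K2008VVV}) closes this loop via $(D) \Rightarrow (A)$, and the key point is that the test space for $(D)$ is $(H^{-1}(\Omega))^{d\times d} = ((H_0^1(\Omega))^{d\times d})^*$: one must realize every $v \in H$ as $v = \dv M$ with $M \in (H_0^1(\Omega))^{d\times d}$, i.e.\ with $M$ \emph{vanishing on} $\Gamma$ (Corollary 7.5 of \cite{K2008VVV}, a Bogovskii-type right inverse of the divergence). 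Your explicit antiderivative $M^{i1}(x) = \int_0^{x_1} v^i(s, x_2, \ldots)\,ds$ produces an $M$ that is merely $H^1$, not $H^1_0$, so it serves your $(C) \Rightarrow (A')$ step but cannot be reused for $(D)$. Until you supply the boundary-vanishing surjectivity (or some other arrow out of $(D)$), the claimed equivalence of $(D)$ is unproved. Two smaller soft spots: the antiderivative formula itself requires the integration segments to stay in $\Omega$, so you should first extend $v$ to $\R^d$ by a Sobolev extension (legitimate since $\Gamma$ is $C^2$); and your $(E) \Rightarrow (A')$ step asserts that antisymmetric $M$ yield divergences $\dv M = -\curl \varphi$ ranging over a dense test class --- this is true but needs the duality argument (a field $w \in H$ orthogonal to all $\curl \varphi$, $\varphi \in (H^1)^3$, satisfies $\curl w = 0$ and $w \times \n = 0$, hence is componentwise harmonic with zero trace, hence zero), which you flag as "the hardest step" but do not carry out; the paper instead routes $(E) \Rightarrow (A)$ through the elliptic problem $\Delta x = v$, $x \in H^2 \cap H_0^1$, avoiding Hodge theory altogether.

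Where your proposal does differ legitimately from the paper, the difference is a trade: you close the main loop through $(C) \Rightarrow (A')$ by realizing arbitrary divergences with non-boundary-vanishing $M$, which is more elementary than the paper's $(D) \Rightarrow (A)$ because no compactly supported right inverse of $\dv$ is needed; the cost is exactly the orphaned $(D)$ described above. Your treatment of the 2D scalar conditions and of the simple-connectedness caveat for $(F_2)$ (harmonic fields with nonzero circulation lie in the kernel of the vorticity map) matches the paper's reasoning and correctly explains the footnote.
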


\cref{T:VVEquiv} remains silent about rates of convergence, but examining the proof of it in \cite{K2008VVV} easily yields the following:
\begin{theorem}\label{T:ROC}
    Assume that ($VV$) holds with
    \begin{align*}
        \norm{u - \ol{u}}_{L^\iny(0, T; L^2(\Omega))}
            \le F(\nu)
    \end{align*}
    for some fixed $T > 0$. Then
    \begin{align*}
        \norm{(u(t) - \ol{u}(t), v)}_{L^\iny([0, T])}
            \le F(\nu) \norm{v}_{L^2(\Omega)}
                \text{ for all } v \in (L^2(\Omega))^d
    \end{align*}
    and
    \begin{align*}
        \norm{(\omega(t) - \ol{\omega}(t), \varphi)}_{L^\iny([0, T])}
            \le F(\nu) \norm{\grad \varphi}_{L^2}
                \text{ for all } \varphi \in H_0^1(\Omega).
    \end{align*}
\end{theorem}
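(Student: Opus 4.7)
The plan is to establish both bounds directly, bypassing the apparatus of \cref{T:VVEquiv}. The first inequality is a pure application of Cauchy-Schwarz: for every $v \in (L^2(\Omega))^d$ and every $t \in [0,T]$,
\[
|(u(t) - \ol u(t), v)| \le \|u(t) - \ol u(t)\|_{L^2}\, \|v\|_{L^2} \le F(\nu) \|v\|_{L^2},
\]
which on taking the supremum in $t$ gives the first display.

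For the vorticity bound, my strategy is to use integration by parts to transfer the derivative from $u - \ol u$ onto the test function $\varphi$, and then apply Cauchy-Schwarz. The key point is that $\varphi \in H_0^1(\Omega)$ vanishes on $\Gamma$, so there is no boundary contribution -- this is precisely the mechanism by which $(F_2)$ appears in \cref{T:VVEquiv} without the trace term on $\Gamma$ that decorates $(E_2)$. Concretely, in two dimensions one uses $\omega(u) = \prt_1 u^2 - \prt_2 u^1$ and integrates by parts to obtain
\[
(\omega(u) - \omega(\ol u), \varphi) = \int_\Omega (u - \ol u) \cdot \grad^\perp \varphi,
\]
where $\grad^\perp \varphi := (\prt_2 \varphi, -\prt_1 \varphi)$ satisfies $\|\grad^\perp \varphi\|_{L^2} = \|\grad \varphi\|_{L^2}$. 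Cauchy-Schwarz and the hypothesis then close the estimate. In higher dimensions the identical reasoning applies once $\varphi$ is read as a matrix-valued test function in $(H_0^1(\Omega))^{d \times d}$, with the antisymmetry of $\omega$ used to carry out the integration by parts cleanly.

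I do not foresee any real obstacle: the content of the theorem is simply that the $L^\infty_t L^2_x$ rate $F(\nu)$ for the velocity transfers verbatim into a dual-norm rate at the level of the vorticity, and that transfer is effected by a single integration by parts plus Cauchy-Schwarz. The only care required is bookkeeping -- one must use $H_0^1$ test functions rather than $H^1$ ones, so that no trace of $\ol u$ on $\Gamma$ survives and the rate is not contaminated by a boundary term unrelated to $F(\nu)$.
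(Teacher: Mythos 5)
Your proposal is correct and takes essentially the same route as the paper: the paper defers to the proof of \cref{T:VVEquiv} in \cite{K2008VVV}, where the first bound is exactly Cauchy--Schwarz on the weak pairing and the $(F_2)$ pairing is likewise rewritten as $\pm(u - \ol{u}, \grad^\perp \varphi)$ by an integration by parts that produces no boundary term for $\varphi \in H_0^1(\Omega)$, then closed by Cauchy--Schwarz. Your observation that the absence of the $\Gamma$-trace term is precisely what distinguishes $(F_2)$ from $(E_2)$ is the right way to see why the rate $F(\nu)$ transfers uncontaminated.
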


\begin{remark}\label{R:ROCOthers}
    \cref{T:ROC} gives the rates of convergence for ($A$) and ($F_2$);
    the rates for ($C$), ($D$), ($E$), and ($E_2$) are like those given for ($F_2$)
    (though the test function, $\varphi$, will lie in different spaces).
\end{remark}

In \cite{Kato1983}, Tosio Kato showed that ($VV$) is equivalent to
\begin{align*}
    \nu \int_0^T \norm{\grad u(s)}_{L^2(\Omega)}^2 \, dt \to 0
        \text{ as } \nu \to 0
\end{align*}
and to
\begin{align}\label{e:KatoCondition}
    \nu \int_0^T \norm{\grad u(s)}_{L^2(\Gamma_{c \nu})}^2 \, dt \to 0
        \text{ as } \nu \to 0.  
\end{align}
Here, and in what follows, $\Gamma_\delta$ is a boundary layer in $\Omega$ of width $\delta > 0$.

In \cite{K2006Kato} it is shown that in \cref{e:KatoCondition}, the gradient can be replaced by the vorticity, so ($VV$) is equivalent to
\begin{align}\label{e:KellCondition}
    \nu \int_0^T \norm{\omega(s)}_{L^2(\Gamma_{c \nu})}^2 \, dt \to 0
        \text{ as } \nu \to 0.  
\end{align}
Note that the necessity of \cref{e:KellCondition} follows immediately from \cref{e:KatoCondition}, but the sufficiency does not, since on the inner boundary of $\Gamma_{c \nu}$ there is no boundary condition of any kind.

We also mention the works \cite{TW1998, W2001}, which together establish conditions equivalent to \refE{KatoCondition}, with a boundary layer slightly larger than that of Kato, yet only involving the tangential derivatives of either the normal or tangential components of $u$ rather than the full gradient. These conditions will not be used in the present work, however.

\Ignore{ 
The setup and notation are that of \cite{K2008VVV, K2006Kato}, and is largely inherited from \cite{Kato1983}: Weak solutions to the Navier-Stokes equations in a bounded domain, $\Omega$, having $C^2$-boundary, $\Gamma$, are denoted by $u$, the viscosity, $\nu > 0$, being implied by context. Weak (or often strong) solutions to the Euler equations are denoted by $\ol{u}$. Except in \refS{NavierBCs}, we use homogeneous Dirichlet conditions ($u = 0$) for the Navier-Stokes equations and we in any case always use no-penetration conditions ($u \cdot \n = 0$) for the Euler equations. Here, $\n$ is the outward normal to the boundary. We use $\omega = \omega(u)$ to be the curl of $u$, defined to be $\prt_1 u^2 - \prt_2 u^1$ in $2D$ and the antisymmetric part of $\grad u$ in higher dimensions. Similarly for $\ol{\omega} = \omega(\ol{u})$.

We denote the $L^2$-inner product by $(\cdot, \cdot)$, and write  $V$ for the space of all divergence-free vector fields in $H_0^1(\Omega)$. We will also use the related function space $H$ of divergence-free vector fields $v$ in $L^2(\Omega)$ with $v \cdot \mathbf{n} = 0$ on $\Gamma$ in the sense of a trace.

See \cite{K2008VVV, K2006Kato} for more details.
} 

%
%
\Part{Theme I: Accumulation of vorticity}

%
%
\section{A 3D version of vorticity accumulation on the boundary}\label{S:3DVersion}

\noindent
In \cref{T:VVEquiv}, the vorticity is defined to be the antisymmetric gradient, as in \cref{e:VorticityRd}. When working in 3D, it is usually more convenient to use the language of three-vectors in condition ($E$). This leads us to the condition $(E')$ in \cref{P:EquivE}.

\begin{prop}\label{P:EquivE}
    The condition (E) in \cref{T:VVEquiv} is equivalent to
    \begin{align*}
        (E') \qquad \curl u \to \curl \ol{u} + (\ol{u} \times \n) \mu
                \text{ in } L^\iny((0, T; (H^1(\Omega)^3)^*).
    \end{align*}
\end{prop}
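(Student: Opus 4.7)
The plan is to exploit the standard linear bijection in $\R^3$ between antisymmetric matrices and vectors: for $M \in (H^1(\Omega))^{3\times 3}$ antisymmetric and $\varphi \in (H^1(\Omega))^3$, the relation $M^{ij} = \epsilon^{ijk}\varphi^k$ (with $\epsilon$ the Levi-Civita symbol) sets up a bi-Lipschitz isomorphism between antisymmetric matrix fields in $(H^1(\Omega))^{3\times 3}$ and vector fields in $(H^1(\Omega))^3$. The idea is to restate condition (E) in terms of this isomorphism and show that the result is exactly (E').

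The first step is to observe that because $\omega$ and $\ol\omega$ are antisymmetric, the pairings in (E) are unchanged if $M$ is replaced by its antisymmetric part $(M-M^T)/2$; consequently (E) is equivalent to the statement in \cref{e:EExplicit} restricted to antisymmetric $M$, in which case $(M-M^T)/2 = M$ and the boundary term simplifies to $\int_\Gamma (M\cdot \n)\cdot \ol{u}$. Next I would substitute $M^{ij} = \epsilon^{ijk}\varphi^k$ and compute each piece. Using $\omega^{ij} = \frac{1}{2}(\prt_j u^i - \prt_i u^j)$ and $(\curl u)^k = \epsilon^{kij}\prt_i u^j$, a short index calculation (exploiting the antisymmetry of $\epsilon$ under the swap $i\leftrightarrow j$) gives
\begin{align*}
    (\omega, M) = -(\curl u, \varphi), \qquad (\ol\omega, M) = -(\curl \ol{u}, \varphi).
\end{align*}

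For the boundary term I would invoke the scalar triple product: $(M\cdot \n)\cdot \ol{u} = \epsilon^{ijk}\varphi^k \n^j \ol{u}^i$, which, after re-ordering the indices, equals $-\varphi \cdot (\n \times \ol{u}) = (\ol{u}\times \n)\cdot \varphi$. Therefore the antisymmetric-$M$ form of (E) becomes, after multiplying through by $-1$,
\begin{align*}
    (\curl u, \varphi) \to (\curl \ol{u}, \varphi) + \int_\Gamma (\ol{u}\times \n)\cdot \varphi \quad \text{in } L^\iny([0,T]),
\end{align*}
for every $\varphi \in (H^1(\Omega))^3$, which is exactly the explicit form of (E'). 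Because the map $\varphi \mapsto M$ is a bi-Lipschitz isomorphism of the relevant $H^1$-spaces, the uniform $L^\iny$ convergence statements in the two formulations are equivalent, completing the proof.

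The proof is essentially algebraic: there is no analytic obstacle beyond the reduction to antisymmetric test matrices. The only thing requiring care is consistent bookkeeping of the signs coming from $\omega^{ij} = -\frac{1}{2}\epsilon^{ijk}(\curl u)^k$ and from the two triple-product rearrangements, so that the sign of the boundary term in (E') comes out to $+(\ol{u}\times \n)\mu$ rather than its negative.
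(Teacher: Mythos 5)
Your argument is correct and is essentially the paper's own proof: both reduce condition (E) to antisymmetric test matrices using the antisymmetry of $\omega$ and $\ol{\omega}$, then pass through the canonical bijection between three-vectors and antisymmetric $3\times 3$ matrices to identify the volume pairings with $(\curl u, \varphi)$ and the boundary term with $(\ol{u}\times\n)\cdot\varphi$. The only difference is notational — you use the Levi-Civita parametrization $M^{ij}=\epsilon^{ijk}\varphi^k$ (the negative of the paper's explicit matrix $F(\varphi)$), and the resulting overall sign cancels, so the signs all come out the same.
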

\begin{proof}
If $A$ is an antisymmetric $3 \times 3$ matrix then
\begin{align*}
    A \cdot M
        &= \frac{A \cdot M + A \cdot M}{2}
        = \frac{A \cdot M + A^T \cdot M^T}{2}
        = \frac{A \cdot M - A \cdot M^T}{2} \\
        &= A \cdot \frac{M - M^T}{2}.
\end{align*}
Thus, since $\omega$ and $\ol{\omega}$ are antisymmetric, referring to \refE{EExplicit}, we see that ($E$) is equivalent to
\begin{align*}
        (\omega(t), M) \to (\ol{\omega}(t), M)
            - \int_\Gamma (M \n) \cdot \ol{u}(t)
                \text{ in } L^\iny([0, T])
\end{align*}
for all \textit{antisymmetric} matrices $M \in (H^1(\Omega))^{3 \times 3}$.

Now, for any three vector $\varphi$ define
\begin{align*}
    F(\varphi)
        &= \tmatrix{0 & -\varphi_3 & \varphi_2}
                   {\varphi_3 & 0 & -\varphi_1}
                   {-\varphi_2 & \varphi_1 & 0}.
\end{align*}
Then $F$ is a bijection from the vector space of three-vectors to the space of antisymmetric $3 \times 3$ matrices. Straightforward calculations show that
\begin{align*}
    F(\varphi) \cdot F(\psi)
        = 2 \varphi \cdot \psi, \qquad
    F(\varphi) v
        = \varphi \times v
\end{align*}
for any three-vectors, $\varphi$, $\psi$, $v$.
Also, $F(\curl u) = 2 \omega$ and $F(\curl \ol{u}) = 2 \ol{\omega}$.

For any $\varphi \in (H^1(\Omega))^3$ let $M = F(\varphi)$. Then
\begin{align*}
    (\omega, M)
        &= \frac{1}{2} \pr{F(\curl u), F(\varphi)}
        = \pr{\curl u, \varphi}, \\
    (\ol{\omega}, M)
        &= \frac{1}{2} \pr{F(\curl \ol{u}), F(\varphi)}
        = \pr{\curl \ol{u}, \varphi}, \\
    (M \n) \cdot \ol{u}
        &= (F(\varphi) \n) \cdot \ol{u}
        = (\varphi \times \n) \cdot \ol{u}
        = - (\ol{u} \times \n) \cdot \varphi.
\end{align*}
In the last equality, we used the scalar triple product identity $(a \times b) \cdot c = - a \cdot (c \times b)$. Because $F$ is a bijection, this gives the equivalence of ($E$) and ($E'$).
\end{proof}

\section{\texorpdfstring{$L^p$}{Lp}-norms of the vorticity blow up for \texorpdfstring{$p > 1$}{p > 1}}\label{S:LpNormsBlowUp}

\noindent

\begin{theorem}\label{T:VorticityNotBounded}
Assume that $\ol{u}$ is not identically zero on $[0, T] \times \Gamma$.
If any of the equivalent conditions of \cref{T:VVEquiv} holds then for all $p \in (1, \iny]$,
\begin{align}\label{e:omegaBlowup}
	\limsup_{\nu \to 0^+} \norm{\omega}_{L^\iny([0, T]; L^p)}
		\to \iny.
\end{align}
\end{theorem}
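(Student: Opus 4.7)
The plan is to argue by contradiction using the characterization of the vorticity limit supplied by condition $(E_2)$ in 2D (or $(E)$ in general dimension). Suppose that for some $p \in (1, \iny]$ we had $\limsup_{\nu \to 0^+} \norm{\omega}_{L^\iny([0,T]; L^p)} < \iny$. Then along some sequence $\nu_k \to 0$ the vorticities $\omega_{\nu_k}$ are uniformly bounded in $L^\iny([0,T]; L^p(\Omega))$. Since this space is the dual of $L^1([0,T]; L^{p'}(\Omega))$ (using reflexivity of $L^p$ for $p < \iny$; the case $p = \iny$ is directly $L^\iny$ of a product measure), Banach-Alaoglu produces a further subsequence, still denoted $\omega_{\nu_k}$, converging weakly-$*$ to some limit $\omega^* \in L^\iny([0,T]; L^p(\Omega))$.

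Next I would identify $\omega^*$ by testing against product functions $\chi(t)\psi(x)$ with $\chi \in C([0,T])$ and $\psi \in H^1(\Omega)$. Weak-$*$ convergence gives
\[
\int_0^T \chi(t) (\omega_{\nu_k}(t), \psi)\, dt \to \int_0^T \chi(t) (\omega^*(t), \psi)\, dt,
\]
while condition $(E_2)$ together with dominated convergence (justified by the uniform-in-$t$ convergence built into $(E_2)$) yields that these same integrals converge to
\[
\int_0^T \chi(t) \brac{(\ol\omega(t), \psi) - \int_\Gamma (\ol u(t) \cdot \BoldTau)\, \psi}\, dt.
\]
Varying $\chi$ gives the pointwise-in-$t$ identity $(\omega^*(t), \psi) = (\ol\omega(t), \psi) - \int_\Gamma (\ol u(t) \cdot \BoldTau)\psi$ for almost every $t$. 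Specializing to $\psi \in C_c^\iny(\Omega)$ pins down $\omega^*(t) = \ol\omega(t)$ in $L^p(\Omega)$; substituting back and letting $\psi$ range over $H^1(\Omega)$ then forces $\int_\Gamma (\ol u(t) \cdot \BoldTau)\psi = 0$ for all such $\psi$ at almost every $t$. Surjectivity of the trace map $H^1(\Omega) \to H^{1/2}(\Gamma)$ forces $\ol u(t) \cdot \BoldTau = 0$ on $\Gamma$, and combined with the no-penetration condition $\ol u \cdot \n = 0$ and the continuity of $\ol u$ from \cref{e:ubarSmoothness}, this yields $\ol u \equiv 0$ on $[0,T] \times \Gamma$, contradicting the hypothesis.

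The higher-dimensional case is analogous using condition $(E)$, or equivalently $(E')$ in 3D: the same weak-$*$ argument forces $\int_\Gamma ((M - M^T)\n) \cdot \ol u(t) = 0$ for every $M \in (H^1(\Omega))^{d \times d}$, and restricting $M$ to range over antisymmetric matrices with arbitrary boundary trace (together with $\ol u \cdot \n = 0$) again yields $\ol u = 0$ on $\Gamma$. The argument is short, and no step is really an obstacle; the only moderately delicate point is ensuring that weak-$*$ compactness in $L^\iny([0,T]; L^p)$ interacts correctly with the uniform-in-$t$ convergence in $(E_2)/(E)$, which the product test function $\chi \otimes \psi$ handles cleanly. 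Conceptually, the theorem says that an $L^p$ function cannot reproduce the singular vortex-sheet term on $\Gamma$ that appears in the limiting identity whenever $\ol u$ is nontrivial on the boundary.
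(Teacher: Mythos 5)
Your proof is correct, but it takes a genuinely different route from the paper's. The paper proves the contrapositive quantitatively: assuming a uniform $L^{q'}$ bound on $\omega$, it combines the trace interpolation inequality of \cref{C:TraceCor} with Yudovich's estimate $\norm{\grad u}_{L^{q'}} \le C(q')\norm{\omega}_{L^{q'}}$ to get $\norm{u - \ol{u}}_{L^\iny([0,T];L^p(\Gamma))} \le C \norm{u - \ol{u}}_{L^\iny([0,T];L^2(\Omega))}^{1 - 1/p} \to 0$, and since $u = 0$ on $\Gamma$ this forces $\ol{u} \equiv 0$ there; this only invokes condition ($B$) and yields, as a byproduct, convergence of the boundary trace with a rate. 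You instead use only weak-$*$ compactness of a bounded set in $L^\iny([0,T];L^p(\Omega))$ together with the vortex-sheet characterization ($E_2$)/($E$): the weak-$*$ limit is an $L^p$ function with $p > 1$, hence carries no singular part on $\Gamma$, which kills the boundary measure and forces $\ol{u} \cdot \BoldTau = 0$. This is softer and more conceptual---it is essentially the same mechanism that underlies \cref{C:EquivConvMeasure}, run in reverse---but it gives no rate and no boundary-trace convergence (which the paper reuses elsewhere). Two small points to tidy: (i) in dimension $d \ge 3$ with $p$ close to $1$ one has $p' > 6$ and $H^1(\Omega) \not\hookrightarrow L^{p'}(\Omega)$, so the pairing $(\omega^*(t),\psi)$ is not defined for all $\psi \in H^1(\Omega)$; restrict the test functions to $C^\iny(\ol{\Omega})$, whose traces are dense in $C(\Gamma)$, which is all the argument needs. (ii) The a.e.-$t$ identity is obtained for each fixed $\psi$ with a $\psi$-dependent null set, so pass to a countable dense family before concluding; continuity of $\ol{u}$ from \cref{e:ubarSmoothness} then upgrades ``a.e.\ $t$'' to all $t$, as you indicate.
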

\begin{proof}
We prove the contrapositive. Assume that the conclusion is not true. Then for some $q' \in (1, \iny]$ it must be that for some $C_0 > 0$ and $\nu_0 > 0$,
\begin{align}\label{e:omegaBoundedCondition}
	\norm{\omega}_{L^\iny([0, T]; L^{q'})} \le C_0
		\text{ for all } 0 < \nu \le \nu_0.
\end{align}
Since $\Omega$ is a bounded domain, if \cref{e:omegaBoundedCondition} holds for some $q' \in (1, \iny]$ it holds for all lower values of $q'$ in $(1, \iny]$, so we can assume without loss of generality that $q' \in (1, \iny)$.

Let $q = q'/(q' - 1) \in (1, \iny)$ be \Holder conjugate to $q$ and $p = 2/q + 1 \in (1, 3)$. Then $p, q, q'$ satisfy the conditions of \cref{C:TraceCor} with $(p -1) q = 2$.

Applying \cref{C:TraceCor} gives, for almost all $t \in [0, T]$,
\begingroup
\allowdisplaybreaks
\begin{align*}
	&\norm{u(t) - \ol{u}(t)}_{L^p(\Gamma)}
		\le C \norm{u(t) - \ol{u}(t)}_{L^2(\Omega)}
		        ^{1 - \frac{1}{p}}
		    \norm{\grad u(t) - \grad \ol{u}(t)}_{L^{q'}(\Omega)}
		        ^{\frac{1}{p}} \\
		&\qquad
			\le C \norm{u(t) - \ol{u}(t)}_{L^2(\Omega)}
			    ^{1 - \frac{1}{p}}
			\pr{\norm{\grad u(t)}_{L^{q'}}
			    + \norm{\grad \ol{u}(t)}_{L^{q'}}}
			        ^{\frac{1}{p}} \\
		&\qquad
			\le C \norm{u(t) - \ol{u}(t)}_{L^2(\Omega)}
			    ^{1 - \frac{1}{p}}
			\pr{C(q') \norm{\omega(t)}_{L^{q'}}
			    + \norm{\grad \ol{u}(t)}_{L^{q'}}}
			        ^{\frac{1}{p}} \\
		&\qquad 
			\le C \norm{u(t) - \ol{u}(t)}_{L^2(\Omega)}
			    ^{1 - \frac{1}{p}}
\end{align*}
\endgroup
for all $0 < \nu \le \nu_0$. Here we used \cref{e:omegaBoundedCondition} and the inequality, $\norm{\grad u}_{L^{q'}(\Omega)} \le C(q') \norm{\omega}_{L^{q'}(\Omega)}$ for all $q' \in (1, \iny)$ of Yudovich \cite{Y1963}. Hence,
\begin{align*}
	\norm{u - \ol{u}}_{L^\iny([0, T]; L^p(\Gamma))}
		\le C \norm{u - \ol{u}}_{L^\iny([0, T]; L^2(\Omega))}
		    ^{1 - \frac{1}{p}}
		    \to 0
\end{align*}
as $\nu \to 0$. But,
\begin{align*}
	\norm{u - \ol{u}}_{L^\iny([0, T]; L^p(\Gamma))}
		= \norm{\ol{u}}_{L^\iny([0, T]; L^p(\Gamma))}
		\ne 0,
\end{align*}
so condition (B) cannot hold and so neither can any of the equivalent conditions in \cref{T:VVEquiv}.
\end{proof}

%
%
\section{Improved convergence when vorticity bounded in \texorpdfstring{$L^1$}{L1}}\label{S:ImprovedConvergence}

\noindent In \cref{S:LpNormsBlowUp} we showed that if the classical vanishing viscosity limit holds then the $L^p$ norms of $\omega$ must blow up as $\nu \to 0$ for all $p \in (1, \iny]$---unless the Eulerian velocity vanishes identically on the boundary. This leaves open the possibility that the $L^1$ norm of $\omega$ could remain bounded, however, and still have the classical vanishing viscosity limit. This happens, for instance, for radially symmetric vorticity in a disk (Examples 1a and 3 in \cref{S:Examples}), as shown in \cite{FLMT2008}.

In fact, as we show in \cref{C:EquivConvMeasure}, when ($VV$) holds and the $L^1$ norm of $\omega$ remains bounded in $\nu$, the convergence in condition ($E$) is stronger; namely, $weak^*$ in measure (as in \cite{FLMT2008}). (See \cref{e:RadonMeasures} and the comments after it for the definitions of $\Cal{M}(\ol{\Omega})$ and $\mu$.)

\begin{cor}\label{C:EquivConvMeasure}
    Suppose that $u \to \ol{u} \text{ in } L^\iny(0, T; H)$ and
    $\curl u$ is bounded in $L^\iny(0, T; L^1(\Omega))$ uniformly in $\eps$.
    Then in 3D,
    \begin{align}\label{e:BetterConvergence}
        \curl u \to \curl \ol{u} + (u_0 \times \n) \mu
            \quad \weak^* \text{ in } L^\iny(0, T; \Cal{M}(\ol{\Omega})).
    \end{align}
    Similarly, ($C$), ($E$), and ($E_2$) hold with $\weak^*$ convergences
    in $L^\iny(0, T; \Cal{M}(\ol{\Omega}))$ rather than uniformly in
    $(H^1(\Omega))^*$.
\end{cor}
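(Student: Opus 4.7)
The plan is to combine condition ($E'$) from \cref{P:EquivE}, which holds since $u \to \ol{u}$ in $L^\iny(0, T; H)$ activates the equivalent conditions of \cref{T:VVEquiv}, with the uniform $L^1$-bound on $\curl u$, via a density argument. I would carry out the proof in detail for ($E'$); the other cases ($C$), ($E$), ($E_2$) will follow along identical lines (with ($C$) requiring the extra but standard observation that under the stated hypotheses $\grad u$ is also bounded in $\Cal{M}(\ol{\Omega})$, uniformly in $\nu$).

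The key preliminary observation is that under the $L^1$ hypothesis, $\curl u(t)$, regarded as a Radon measure on $\ol{\Omega}$ via the action $\varphi \mapsto \int_\Omega \curl u(t) \cdot \varphi$, has total variation at most $C$ uniformly in $\nu$ and $t$; the candidate limit $\curl \ol{u}(t) + (\ol{u}(t) \times \n)\mu$ is likewise uniformly bounded in $\Cal{M}(\ol{\Omega})$ by the regularity of $\ol{u}$ from \cref{e:ubarSmoothness} and the finiteness of the surface measure of $\Gamma$. Condition ($E'$), on the other hand, supplies convergence only against test functions in $H^1(\Omega)^3$, uniformly in $t$.

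The core step is to upgrade the test-function space from $H^1(\Omega)^3$ to $C(\ol{\Omega})^3$. The bridge is $C^1(\ol{\Omega})^3$, which is contained in both $H^1(\Omega)^3$ and $C(\ol{\Omega})^3$ and is dense in the latter with respect to the supremum norm. Given $\varphi \in C(\ol{\Omega})^3$ and $\eta > 0$, I would approximate $\varphi$ by $\psi \in C^1(\ol{\Omega})^3$ with $\norm{\varphi - \psi}_{L^\iny(\ol{\Omega})} < \eta$. The uniform total-variation bound controls $\abs{(\curl u(t), \varphi - \psi)}$ and the analogous pairing of the limit measure against $\varphi - \psi$ by $C\eta$, while ($E'$) delivers uniform-in-$t$ convergence for $\psi$. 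A triangle-inequality argument then yields uniform-in-$t$ convergence of $(\curl u(t), \varphi)$ to the claimed limit for every $\varphi \in C(\ol{\Omega})^3$.

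To promote this to weak-* convergence in $L^\iny(0, T; \Cal{M}(\ol{\Omega}))$ (the dual of $L^1(0, T; C(\ol{\Omega}))$), I would test first against elementary tensors $g(t)\psi(x)$ with $g \in L^1(0, T)$ and $\psi \in C^1(\ol{\Omega})^3$, for which dominated convergence combines with the uniform-in-$t$ convergence just obtained to give the desired integral limit; density together with the uniform total-variation bound then extends this to general $g \in L^1(0, T; C(\ol{\Omega}))$. The main potential obstacle is simply orchestrating these nested approximations (in space and then in time) without losing uniformity, but each step is controlled by the uniform $L^1$-in-$x$, $L^\iny$-in-$t$ bound on $\curl u$, so the argument should be essentially bookkeeping once the framework is in place.
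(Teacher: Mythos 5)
Your proposal is correct and follows essentially the same route as the paper: invoke condition ($E'$) from \cref{P:EquivE}, use the uniform $L^1$ bound to control the total variation of $\curl u(t)$ as a measure, approximate a continuous test function in the sup norm by an $H^1$ (here $C^1$) test function, and conclude by the triangle inequality. The only difference is that you spell out the final passage to weak-$*$ convergence in $L^\iny(0, T; \Cal{M}(\ol{\Omega}))$ via elementary tensors, which the paper leaves implicit after establishing the uniform-in-$t$ convergence against each fixed spatial test function.
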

\begin{proof}
    We prove \cref{e:BetterConvergence} explicitly for 3D solutions,
    the results for ($C$), ($E$), and ($E_2$) following in the same way.
    
    Let $\psi \in C(\ol{\Omega})$. What we must show is that
    \begin{align*}
        (\curl u(t) - \curl \ol{u}(t), \psi)
            \to \int_\Gamma (u_0(t) \times \n) \cdot \psi
                    \text{ in } L^\iny([0, T]).
    \end{align*}
    So let $\eps > 0$ and choose $\varphi \in H^1(\Omega)^d$ with
    $\norm{\psi - \varphi}_{C(\ol{\Omega})} < \eps$. We can always find
    such a $\varphi$ because $H^1(\Omega)$ is dense in $C(\ol{\Omega})$.
    Let
    \begin{align*}
        M = \max \set{\norm{\curl u
                        - \curl \ol{u}}_{L^\iny(0, T; L^1(\Omega))},
                \norm{\ol{u}}_{L^\iny([0, T] \times \Omega)}},
    \end{align*}
    which we note is finite since $\norm{\curl u}_{L^\iny(0, T; L^1(\Omega))}$
    and $\norm{\curl \ol{u}}_{L^\iny(0, T; L^1(\Omega))}$ are both
    finite. Then
    \begingroup
    \allowdisplaybreaks
    \begin{align*}
        &\abs{(\curl u(t) - \curl \ol{u}(t), \psi)
            - \int_\Gamma (u_0(t) \times \n) \cdot \psi} \\
            &\qquad
            \le \abs{(\curl u(t) - \curl \ol{u}(t), \psi - \varphi)
            - \int_\Gamma (u_0(t) \times \n) \cdot
                (\psi - \varphi)} \\
            &\qquad\qquad
            + \abs{(\curl u(t) - \curl \ol{u}(t), \varphi)
            - \int_\Gamma (u_0(t) \times \n) \cdot \varphi} \\
            &\qquad
            \le 2 M \eps
            + \abs{(\curl u(t) - \curl \ol{u}(t), \varphi)
            - \int_\Gamma (u_0(t) \times \n) \cdot \varphi}.
    \end{align*}
    \endgroup
    By \cref{P:EquivE}, we can make the last term above smaller
    than, say, $\eps$, by choosing $\nu$ sufficiently small, which is sufficient
    to give the result.
\end{proof}

\begin{remark}
Suppose that we have the slightly stronger condition that $\grad u$ is bounded in $L^\iny(0, T; L^1(\Omega))$ uniformly in $\eps$. If we are in 2D, $W^{1, 1}(\Omega)$ is compactly embedded in $L^2(\Omega)$. This is sufficient to conclude that ($VV$) holds, as shown in \cite{GKLMN14}.
\end{remark}

%
%
\section{Width of the boundary layer}\label{S:BoundaryLayerWidth}

\noindent
Working in two dimensions, make the assumptions on the initial velocity and on the forcing in  \cref{T:VVEquiv}, and assume in addition that the total mass of the initial vorticity does not vanish; that is,
\begin{align}\label{e:NonzeroMass}
	m := \int_\Omega \omega_0 = (\omega_0, 1) \ne 0.
\end{align}
(In particular, this means that $u_0$ is not in $V$.) The total mass of the Eulerian vorticity is conserved so
\begin{align}\label{e:mEAllTime}
    (\ol{\omega}(t), 1) = m \text{ for all } t \in \R.
\end{align}
The Navier-Stokes velocity, however, is in $V$ for all positive time, so its total mass is zero; that is,
\begin{align}\label{e:mNSAllTime}
    (\omega(t), 1) = 0  \text{ for all } t > 0.
\end{align}

Let us suppose that the vanishing viscosity limit holds. Fix $\delta > 0$ let $\varphi_\delta$ be a smooth cutoff function equal to $1$ on $\Gamma_\delta$ and equal to 0 on $\Omega \setminus \Gamma_{2 \delta}$. Then by ($F_2$) of \cref{T:VVEquiv} and using \cref{e:mEAllTime},
\begin{align*}
    \abs{(\omega, 1 - \varphi_\delta) - m}
        \to \abs{(\ol{\omega}, 1 - \varphi_\delta) - m}
        = \abs{m - (\ol{\omega}, \varphi_\delta) - m}
        \le C \delta,
\end{align*}
the convergence being uniform on $[0, T]$. Thus, for all sufficiently small $\nu$,
\begin{align}\label{e:omega1phiLimit}
	\abs{(\omega, 1 - \varphi_\delta) - m} \le C \delta.
\end{align}

\Ignore { 
\begin{align}\label{e:E2VVV}
	\omega \to \ol{\omega} - (\ol{u} \cdot \BoldTau) \mu 
				\text{ in } (H^1(\Omega))^*
					   \text{ uniformly on } [0, T].
\end{align}

Fix $\delta > 0$ let $\varphi_\delta$ be a smooth cutoff function equal to $1$ on $\Gamma_\delta$ and equal to 0 on $\Omega \setminus \Gamma_{2 \delta}$. Letting $\nu \to 0$, since $\varphi_\delta = 1$ on $\Gamma$, we have
\begin{align*}
	(\omega, \varphi_\delta)
		&\to (\ol{\omega}, \varphi_\delta)
			- \int_\Gamma \ol{u} \cdot \BoldTau
		= (\ol{\omega}, \varphi_\delta)
			+ \int_\Gamma \ol{u}^\perp \cdot \mathbf{n} \\
		&= (\ol{\omega}, \varphi_\delta)
			+ \int_\Omega \dv \ol{u}^\perp
		= (\ol{\omega}, \varphi_\delta)
			- \int_\Omega \ol{\omega} \\
		&=  (\ol{\omega}, \varphi_\delta)
			- \int_\Omega \ol{\omega}_0
		= (\ol{\omega}, \varphi_\delta) - m.
\end{align*}
The convergence here is uniform over $[0, T]$.

Now, 
\begin{align*}
	\abs{(\ol{\omega}, \varphi_\delta)}
		\le \norm{\ol{\omega}}_{L^\iny} \abs{\Gamma_{2 \delta}}
		= \norm{\ol{\omega}_0}_{L^\iny} \abs{\Gamma_{2 \delta}}
		\le C \delta.
\end{align*}
Thus, for all sufficiently small $\nu$,
\begin{align}\label{e:omegaphiLimit}
	\abs{(\omega, \varphi_\delta) + m} \le C \delta.
\end{align}

For $t > 0$, $u$ is in $V$ so the total mass of $\omega$ is zero for all $t > 0$; that is,
\begin{align*}
	\int_\Omega \omega = 0.
\end{align*}
It follows that for all sufficiently small $\nu$,
\begin{align}\label{e:omega1phiLimit}
	\abs{(\omega, 1 - \varphi_\delta) - m} \le C \delta.
\end{align}
This reflects one of the consequences of \cref{T:VVEquiv} that
\begin{align*}
	\omega \to \ol{\omega} \text{ in } H^{-1}(\Omega)
					   \text{ uniformly on } [0, T],
\end{align*}
which represents a kind of weak internal convergence of the vorticity.
} 

In \cref{e:omega1phiLimit} we must hold $\delta$ fixed as we let $\nu \to 0$, for that is all we can obtain from the weak convergence in ($F_2$). Rather, this is all we can obtain without making some assumptions about the rates of convergence, a matter we will return to in the next section.


Still, it is natural to ask whether we can set $\delta = c \nu$ in \cref{e:omega1phiLimit}, this being the width of the boundary layer in Kato's seminal paper \cite{Kato1983} on the subject. If this could be shown to hold it would say that outside of Kato's layer the vorticity for solutions to ($NS$) converges in a (very) weak sense to the vorticity for the solution to ($E$). The price for such convergence, however, would be a buildup of vorticity inside the layer to satisfy the constraint in \cref{e:mNSAllTime}.

In fact, however, this is not the case, at least not by a closely related measure of vorticity buildup near the boundary.
The total mass of the vorticity (in fact, its $L^1$-norm) in any layer smaller than that of Kato goes to zero and, if the vanishing visocity limit holds, then the same holds for Kato's layer. Hence, if there is a layer in which vorticity accumulates, that layer is at least as wide as Kato's and is wider than Kato's if the vanishing viscosity limit holds. This is the content of the following theorem.

\begin{theorem}\label{T:BoundaryLayerWidth}
Make the assumptions on the initial velocity and on the forcing in \cref{T:VVEquiv}. For any positive function $\delta = \delta(\nu)$,
\begin{align}\label{e:OmegaL1VanishGeneral}
	\norm{\omega}_{L^2([0, T]; L^1(\Gamma_{\delta(\nu)}))}
		\le C \pr{\frac{\delta(\nu)}{\nu}}^{1/2}.
\end{align}
If the vanishing viscosity limit holds and
\begin{align*} 
	\limsup_{\nu \to 0^+} \frac{\delta(\nu)}{\nu} < \iny
\end{align*}
then
\begin{align}\label{e:OmegaL1Vanish}
	\norm{\omega}_{L^2([0, T]; L^1(\Gamma_{\delta(\nu)}))}
		\to 0 \text{ as } \nu \to 0.
\end{align}
\end{theorem}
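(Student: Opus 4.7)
The plan is to reduce the $L^2([0,T];L^1(\Gamma_{\delta(\nu)}))$ norm of $\omega$ to an $L^2([0,T];L^2(\Gamma_{\delta(\nu)}))$ norm by Cauchy--Schwarz on the thin layer (which costs only $\delta(\nu)^{1/2}$), and then to bound the latter by two different ingredients: the global energy-dissipation inequality for \cref{e:OmegaL1VanishGeneral}, and the vorticity version \cref{e:KellCondition} of Kato's boundary-layer condition for \cref{e:OmegaL1Vanish}. Since $\Omega$ is bounded and $\Gamma$ is $C^2$, the tubular neighborhood satisfies $|\Gamma_\delta|\le C\delta$ for all sufficiently small $\delta>0$, so Cauchy--Schwarz in space gives $\norm{\omega(t)}_{L^1(\Gamma_{\delta(\nu)})}^2 \le C\delta(\nu)\norm{\omega(t)}_{L^2(\Gamma_{\delta(\nu)})}^2$. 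Integrating in $t$ yields the common starting point
\[
  \norm{\omega}_{L^2([0,T]; L^1(\Gamma_{\delta(\nu)}))}^2
    \le C\,\delta(\nu)\int_0^T \norm{\omega(t)}_{L^2(\Gamma_{\delta(\nu)})}^2 \, dt.
\]

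For the unconditional estimate \cref{e:OmegaL1VanishGeneral}, I would enlarge the spatial domain of integration from $\Gamma_{\delta(\nu)}$ to all of $\Omega$ and then apply the standard Leray--Hopf energy inequality together with the pointwise bound $\norm{\omega}_{L^2(\Omega)}\le \norm{\grad u}_{L^2(\Omega)}$. Under the hypotheses on $u_0$ and $f$ carried over from \cref{T:VVEquiv}, the familiar computation gives $\nu\int_0^T \norm{\grad u(t)}_{L^2(\Omega)}^2\,dt \le C(T,u_0,f)$, hence $\int_0^T \norm{\omega(t)}_{L^2(\Omega)}^2\,dt \le C/\nu$. Substituting this into the Cauchy--Schwarz display produces exactly $\norm{\omega}_{L^2([0,T];L^1(\Gamma_{\delta(\nu)}))} \le C(\delta(\nu)/\nu)^{1/2}$.

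For \cref{e:OmegaL1Vanish}, the hypothesis $\limsup_{\nu\to 0^+}\delta(\nu)/\nu<\infty$ supplies a constant $c>0$ with $\delta(\nu)\le c\nu$ for all sufficiently small $\nu$, so $\Gamma_{\delta(\nu)}\subseteq \Gamma_{c\nu}$ and the previous display rearranges to
\[
  \norm{\omega}_{L^2([0,T]; L^1(\Gamma_{\delta(\nu)}))}^2
    \le C\,c\left(\nu\int_0^T \norm{\omega(t)}_{L^2(\Gamma_{c\nu})}^2\,dt\right).
\]
Since ($VV$) is assumed, the equivalence proved in \cite{K2006Kato} between ($VV$) and \cref{e:KellCondition} forces the parenthesized quantity to tend to $0$ as $\nu\to 0$, giving \cref{e:OmegaL1Vanish}.

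There is no genuine obstacle in the execution; the conceptual point worth emphasizing is that the raw energy inequality is only sharp enough to kill the $L^1$-mass of vorticity in layers \emph{strictly thinner} than Kato's (the factor $(\delta/\nu)^{1/2}$ is useless when $\delta\sim\nu$), so reaching the Kato scale requires appealing to the sharper equivalent condition \cref{e:KellCondition} rather than to the generic dissipation bound. This is precisely what produces the asymmetric dichotomy in the statement: vanishing is automatic for layers thinner than Kato's, but vanishing inside Kato's own layer is exactly what ($VV$) buys.
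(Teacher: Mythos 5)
Your proposal is correct and follows essentially the same route as the paper: Cauchy--Schwarz on the thin layer to trade $L^1(\Gamma_{\delta})$ for $\delta^{1/2}$ times $L^2(\Gamma_{\delta})$, then the Leray--Hopf energy inequality for the unconditional bound \cref{e:OmegaL1VanishGeneral} and the equivalent condition \cref{e:KellCondition} for the conditional vanishing \cref{e:OmegaL1Vanish}. Your closing remark about the dichotomy between sub-Kato layers and the Kato layer itself also matches the discussion the paper gives just before the theorem.
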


\begin{proof}
By the Cauchy-Schwarz inequality,
\begin{align*}
	\norm{\omega}_{L^1(\Gamma_{\delta(\nu)})}
		\le \norm{1}_{L^2(\Gamma_{\delta(\nu)})} \norm{\omega}_{L^2(\Gamma_{\delta(\nu)})}
			\le C \delta^{1/2} \norm{\omega}_{L^2(\Gamma_{\delta(\nu)})}
\end{align*}
so
\begin{align*}
	\frac{C}{\delta} \norm{\omega}_{L^1(\Gamma_{\delta(\nu)})}^2
	    \le \norm{\omega}_{L^2(\Gamma_{\delta(\nu)})}^2
\end{align*}
and
\begin{align*}
	\frac{C \nu}{\delta} \norm{\omega}_{L^2([0, T]; L^1(\Gamma_{\delta(\nu)}))}^2
	    \le \nu \norm{\omega}_{L^2([0, T]; L^2(\Gamma_{\delta(\nu)}))}^2.
\end{align*}
By the basic energy inequality for the Navier-Stokes equations, the right-hand side is bounded, giving \refE{OmegaL1VanishGeneral}, and if the vanishing viscosity limit holds, the right-hand side goes to zero by \cref{e:KellCondition}, giving \refE{OmegaL1Vanish}.
\end{proof}

\begin{remark}
In \refT{BoundaryLayerWidth}, we do not need the assumption in \refE{NonzeroMass} nor do we need to assume that we are in dimension two. The result is of most interest, however, when one makes these two assumptions.
\end{remark}

\begin{remark}
\refE{OmegaL1Vanish} also follows from condition (iii'') in \cite{K2006Kato} using the Cauchy-Schwarz inequality in the manner above, but that is using a sledge hammer to prove a simple inequality. Note that \refE{OmegaL1Vanish} is necessary for the vanishing viscosity limit to hold, but is not (as far as we can show) sufficient.
\end{remark}

\Ignore{ 

\begin{theorem}\label{T:BoundaryLayerWidth}
Make the assumptions on the initial velocity and on the forcing in \cref{T:VVEquiv}. Assume that the vanishing viscosity limit holds. For any nonnegative function $\delta = \delta(\nu)$,
\begin{align}\label{e:OmegaMassVanishGeneral}
	\limsup_{\nu \to 0^+} \int_0^T \abs{\int_{\Gamma_{\delta(\nu)}} \omega}
		\le C T \lim_{\nu \to 0} \frac{\delta(\nu)}{\nu}.
\end{align}
If
\begin{align*} 
	\lim_{\nu \to 0} \frac{\delta(\nu)}{\nu} = 0
\end{align*}
then
\begin{align}\label{e:OmegaMassVanish}
	\int_0^T \abs{\int_{\Gamma_{\delta(\nu)}} \omega}
		\to 0 \text{ as } \nu \to 0.
\end{align}
\end{theorem}
\begin{proof}
\begin{align*}
	\int_{\Gamma_\delta} \omega
		= \int_{A_{L, \delta}} \omega
			+ \int_{\Gamma_\delta \setminus A_{L, \delta}} \omega,
\end{align*}
where
\begin{align*}
	A_{L, \delta}= \set{x \in \Gamma_\delta \colon \abs{\omega} \ge L}.
\end{align*}
Thus,
\begin{align*}
	\int_{\Gamma_\delta} \omega
		\le \int_{A_{L, \delta}} \omega
			+ L \abs{\Gamma_\delta}
		\le \int_{A_{L, \delta}} \omega
			+ C \delta L.
\end{align*}

Let $L$ vary with $\nu$ at a rate we will specify later.
Then,
\begin{align*}
	\nu &\int_0^T \int_{\Gamma_\delta} \abs{\omega}^2
		= \nu \int_0^T \int_{A_{L, \delta}} \abs{\omega}^2
			+ \nu \int_0^T \int_{\Gamma_\delta \setminus A_{L, \delta}} \abs{\omega}^2 \\
		&\ge \nu \int_0^T \int_{A_{L, \delta}} L \abs{\omega}
		\ge L \nu \int_0^T \abs{\int_{A_{L, \delta}} \omega} \\
		&\ge L \nu \brac{\int_0^T \abs{\int_{\Gamma_\delta} \omega} 
			- \int_0^T C \delta L}
		= L \nu \int_0^T \abs{\int_{\Gamma_\delta} \omega} 
			- C T \nu \delta L^2.
\end{align*}

Define
\begin{align*}
	M(\nu)
		= \int_0^T \abs{\int_{\Gamma_{\delta(\nu)}} \omega}, \quad
	M
		= \limsup_{\nu \to 0^+} M(\nu).
\end{align*}
Letting $L = \nu^{-1}$, we have
\begin{align*}
	\limsup_{\nu \to 0^+} &\, \nu \int_0^T \int_{\Gamma_{\delta(\nu)}} \abs{\omega}^2
		\ge \limsup_{\nu \to 0^+} \brac{L_k \nu M(\nu) - CT \nu \delta(\nu) L^2} \\
		&= M - CT \limsup_{\nu \to 0^+} \frac{\delta(\nu)}{\nu}
		= M.
\end{align*}
But because we have assumed that the vanishing viscosity limit holds, the left-hand side vanishes with $\nu$ regardless of how the function $\delta$ is chosen. Thus,
\begin{align*}
	M \le CT \limsup_{\nu \to 0^+} \frac{\delta(\nu)}{\nu},
\end{align*}
giving \refE{OmegaMassVanishGeneral} and also \refE{OmegaMassVanish}.
\end{proof}
} 

%
%
\section{Optimal convergence rate}\label{S:OptimalConvergenceRate}

\noindent Still working in two dimensions, let us return to \cref{e:omega1phiLimit}, assuming as in the previous section that the vanishing viscosity limit holds, but bringing the rate of convergence function, $F$, of \cref{T:ROC} into the analysis. We will now make $\delta = \delta(\nu) \to 0$ as $\nu \to 0$, and choose $\varphi_\delta$ slightly differently, requiring that it equal $1$ on $\Gamma_{\delta^*}$ and vanish outside of $\Gamma_\delta$ for some $0 < \delta^* = \delta^*(\nu) < \delta$. We can see from the argument that led to \cref{e:omega1phiLimit}, incorporating the convergence rate for ($F_2$) given by \cref{T:ROC}, that
\begin{align*}
    \abs{(\omega, 1 - \varphi_\delta) - m}
        \le C \delta + \norm{\grad \varphi_\delta}_{L^2(\Omega)} F(\nu).
\end{align*}
Because $\prt \Omega$ is $C^2$, we can always choose $\varphi_\delta$ so that $\abs{\grad \varphi_\delta} \le C(\delta - \delta^*)^{-1}$. Then for all sufficiently small $\delta$,
\begin{align*}
    \norm{\grad \varphi_\delta}_{L^2(\Omega)}
        \le \pr{\int_{\Gamma_\delta \setminus \Gamma_{\delta^*}}
                \pr{\frac{C}{\delta - \delta^*}}^2}^{\frac{1}{2}}
        = C \frac{(\delta - \delta^*)^{\frac{1}{2}}}{\delta - \delta^*}
        = C (\delta - \delta^*)^{-\frac{1}{2}}.
\end{align*}
We then have
\begin{align}\label{e:mDiffEst}
    \abs{(\omega, 1 - \varphi_\delta) - m}
        \le C \brac{\delta + (\delta - \delta^*)^{-\frac{1}{2}} F(\nu)}.
\end{align}

For any measurable subset $\Omega'$ of $\Omega$, define
\begin{align*}
    \mathbf{M}(\Omega') = \int_{\Omega'} \omega,
\end{align*}
the total mass of vorticity on $\Omega'$. Then
\begin{align*}
    \mathbf{M}(\Gamma_\delta^C)
        = (\omega, 1 - \varphi_\delta)
            + \int_{\Gamma_\delta \setminus \Gamma_{\delta^*}} \varphi_\delta \omega
\end{align*}
so
\begin{align}\label{e:MDiffEst}
    \begin{split}
    \abs{(\omega, 1 - \varphi_\delta) - \mathbf{M}(\Gamma_\delta^C)}
        &\le \norm{\omega}_{L^2(\Gamma_\delta \setminus \Gamma_{\delta^*})}
            \norm{\varphi_\delta}_{L^2(\Gamma_\delta \setminus \Gamma_{\delta^*})} \\
        &\le C (\delta - \delta^*)^{\frac{1}{2}}
            \norm{\omega}_{L^2(\Gamma_{\delta})}.
    \end{split}
\end{align}

\Ignore{ 
To obtain any reasonable control on the total mass of vorticity, we certainly need $\delta, \delta^* \to 0$ as $\nu \to 0$, but more important, as we can see from \cref{e:mDiffEst}, we need
\begin{align}\label{e:LayerReq1}
    (\delta - \delta^*)^{-\frac{1}{2}} F(\nu) \to 0
        \text{ as } \nu \to 0.
\end{align}
In light of \cref{T:BoundaryLayerWidth} and its proof, we should also require at least that
\begin{align}\label{e:LayerReq2}
    (\delta - \delta^*)^{\frac{1}{2}}
            \norm{\omega}_{L^2(0, T; L^2(\Gamma_\delta))} \to 0
        \text{ as } \nu \to 0
\end{align}
so that the bound in \cref{e:mDiffEst} will lead, via \cref{e:MDiffEst}, to a bound on the total mass of vorticity outside the boundary layer, $\Gamma_\delta$.

Now, as in the proof of \cref{T:BoundaryLayerWidth}, if we let $\delta - \delta^* = O(\nu)$ then the condition in \cref{e:LayerReq2} will hold by \cref{e:KellCondition}. Then the requirement in \cref{e:LayerReq1} becomes \begin{align*}
    F(\nu)
        = o \pr{(\delta - \delta^*)^{\frac{1}{2}}}
        = o (\nu^{\frac{1}{2}}).
\end{align*}
} 

From these observations and those in the previous section, we have the following:
\begin{theorem}\label{T:VorticityMassControl}
    \Ignore{ 
    Assume that $\delta = \delta(\nu) \to 0$ as $\nu \to 0$ and define
    \begin{align*}
        M_\delta
            = \norm{\int_\Omega \omega_0
                - \int_{\Gamma_\delta^C} \omega(t)}_{L^2([0, T])}.
    \end{align*}
    If the classical vanishing viscosity limit in ($VV$) holds with a rate that is
    $o(\nu^{\frac{1}{2}})$ then $M_{\delta(\nu)} \to 0$ as $\nu \to 0$.
    } 
    Assume that the classical vanishing viscosity limit in ($VV$) holds with a rate
    of convergence,
    $F(\nu) = o(\nu^{1/2})$. Then in 2D
    the initial mass of the vorticity must be zero.
\end{theorem}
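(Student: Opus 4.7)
The plan is to assemble the two inequalities \cref{e:mDiffEst} and \cref{e:MDiffEst} that have already been derived, use that $(\omega(t),1)=0$ for $t>0$ to replace $\mathbf{M}(\Gamma_\delta^C)$ by $-\mathbf{M}(\Gamma_\delta)$, handle $\mathbf{M}(\Gamma_\delta)$ by Cauchy--Schwarz, and finally choose $\delta$ and $\delta^*$ on the scale of Kato's layer so that \cref{e:KellCondition} closes the argument. Concretely I would fix $c>0$ equal to the constant in \cref{e:KellCondition} and set $\delta(\nu)=2c\nu$, $\delta^*(\nu)=c\nu$, so that $\delta-\delta^*=c\nu$ and $\Gamma_\delta\subseteq\Gamma_{2c\nu}$.

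For almost every $t\in[0,T]$, the triangle inequality applied to \cref{e:mDiffEst} and \cref{e:MDiffEst}, combined with $\mathbf{M}(\Gamma_\delta^C)(t)=-\mathbf{M}(\Gamma_\delta)(t)$, yields
\begin{align*}
    |m+\mathbf{M}(\Gamma_\delta)(t)|
        \le C\delta + CF(\nu)(\delta-\delta^*)^{-1/2}
            + C(\delta-\delta^*)^{1/2}\|\omega(t)\|_{L^2(\Gamma_\delta)}.
\end{align*}
Since $|\mathbf{M}(\Gamma_\delta)(t)|\le\delta^{1/2}\|\omega(t)\|_{L^2(\Gamma_\delta)}$ by Cauchy--Schwarz, another application of the triangle inequality gives
\begin{align*}
    |m|
        \le C\delta + CF(\nu)(\delta-\delta^*)^{-1/2}
            + C\delta^{1/2}\|\omega(t)\|_{L^2(\Gamma_\delta)}.
\end{align*}

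Taking the $L^2$-norm in $t$ over $[0,T]$ and pulling the $\nu$-independent quantity $|m|$ out of the left-hand side gives
\begin{align*}
    T^{1/2}|m|
        \le CT^{1/2}\delta + CT^{1/2}F(\nu)(\delta-\delta^*)^{-1/2}
            + C\bigl(\delta\,\|\omega\|_{L^2(0,T;L^2(\Gamma_\delta))}^{2}\bigr)^{1/2}.
\end{align*}
With the choice above, $\delta\to 0$, the middle term behaves like $F(\nu)/\nu^{1/2}=o(1)$ by hypothesis, and the last term equals $C(2c)^{1/2}\bigl(\nu\,\|\omega\|_{L^2(0,T;L^2(\Gamma_{2c\nu}))}^{2}\bigr)^{1/2}\to 0$ by \cref{e:KellCondition}, which is available because $(VV)$ is assumed. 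Hence $|m|\to 0$ as $\nu\to 0$; since $|m|$ does not depend on $\nu$, this forces $m=0$.

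There is no genuine obstacle once the two pre-theorem estimates are in hand; the only point requiring care is that \cref{e:mDiffEst} is uniform in $t$ while \cref{e:MDiffEst} carries the factor $\|\omega(t)\|_{L^2(\Gamma_\delta)}$, which is controlled only after multiplication by $\nu^{1/2}$ and integration in time. Moving to the $L^2$-in-$t$ norm (rather than a pointwise bound) is the step that reconciles these two scales and turns Kato's viscosity--times--layer identity into exactly the right power of $\nu$ to match $F(\nu)=o(\nu^{1/2})$.
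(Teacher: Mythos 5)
Your proposal is correct and follows essentially the same route as the paper: combine \cref{e:mDiffEst} and \cref{e:MDiffEst}, choose $\delta,\delta^*$ on the Kato scale $O(\nu)$ with $\delta-\delta^*\sim\nu$, pass to the $L^2$-in-$t$ norm, and invoke \cref{e:KellCondition} together with the fact that $u(t)\in V$ forces the total vorticity mass to vanish for $t>0$. The only cosmetic difference is that where you bound $\mathbf{M}(\Gamma_\delta)$ by Cauchy--Schwarz inline, the paper cites \cref{T:BoundaryLayerWidth}, whose proof is exactly that Cauchy--Schwarz computation.
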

\begin{proof}
    From \cref{e:mDiffEst,e:MDiffEst},
    \begin{align*}
        M_\delta
            &:= \abs{m - \mathbf{M}(\Gamma_\delta^C)}
            \le \abs{m - (\omega, 1 - \varphi_\delta)}
                + \abs{(\omega, 1 - \varphi_\delta) - \mathbf{M}(\Gamma_\delta^C)} \\
            &\le C \brac{\delta + (\delta - \delta^*)^{-\frac{1}{2}} F(\nu)}
                + C (\delta - \delta^*)^{\frac{1}{2}}
            \norm{\omega}_{L^2(\Gamma_{\delta})}.
    \end{align*}
    Choosing $\delta(\nu) = \nu$, $\delta^*(\nu) = \nu/2$, we have
    \begin{align*}
        M_\nu
            &\le C \brac{\nu + \nu^{-\frac{1}{2}} o(\nu^{\frac{1}{2}})}
                + C \nu^{\frac{1}{2}}
            \norm{\omega}_{L^2(\Gamma_{\nu})},
    \end{align*}
    uniformly over $[0, T]$. Squaring, integrating in time, and applying Young's
    inequality gives
    \begin{align*}
        \norm{M_\nu}_{L^2([0, T])}^2
            = \int_0^T M_\nu^2
            \le CT (\nu^2 + o(1))
                + C \nu \int_0^T \norm{\omega}_{L^2(0, T; L^2(\Gamma_\nu))}^2
                \to 0
    \end{align*}
    as $\nu \to 0$ by \cref{e:KellCondition}.
    
    Then,
    \begin{align*}
        \norm{m - M(\Omega)}_{L^2([0, T])}
            &\le \norm{m - M(\Gamma_\nu^C)}_{L^2([0, T])}
                + \norm{M(\Gamma_\nu)}_{L^2([0, T])} \\
            &\le \norm{M_\nu}_{L^2([0, T])}
                + \norm{\omega}_{L^2([0, T]; L^1(\Gamma_{\nu}))}
            \to 0
    \end{align*}
    as $\nu \to 0$ by \cref{T:BoundaryLayerWidth}.
    
    But $u(t)$ lies in $V$ so $M(\Omega) = 0$ for all $t > 0$.
    Hence, the limit above is possible only if $m = 0$.
\end{proof}

For non-compatible initial data, that is for $u_0 \notin V$, the total mass of vorticity will generically not be zero, so $C \sqrt{\nu}$ should be considered a bound on the rate of convergence for non-compatible initial data. As we will see in \cref{R:ROC}, however, a rate of convergence as good as $C \sqrt{\nu}$ is almost impossible unless the initial data is fairly smooth, and even then it would only occur in special circumstances.

Therefore, let us assume that the rate of convergence in ($VV$) is only $F(\nu) = C \nu^{1/4}$. As we will see in \cref{S:Examples}, this is a more typical rate of convergence for the simple examples for which ($VV$) is known to hold.
Now \cref{e:mDiffEst} still gives a useful bound as long as $\delta - \delta^*$ is slightly larger than the Prandtl layer width of $C \sqrt{\nu}$ (though \cref{e:MDiffEst} then fails to tell us anything useful). So let us set $\delta = 2 \nu^{1/2 - \eps}$, $\delta^* = \nu^{1/2 - \eps}$, $\eps > 0$ arbitrarily small. We are building here to a conjecture, so for these purposes we will act as though $\eps = 0$.

If the Prandtl theory is correct, then we should expect that $\mathbf{M}(\Gamma_\delta^C) \to m$ as $\nu \to 0$, since outside of the Prandtl layer $u$ matches $\ol{u}$. But the total mass of vorticity for all positive time is zero, and the total mass in the Kato Layer, $\Gamma_\nu$, goes to zero by \cref{T:BoundaryLayerWidth}. There would be no choice then but to have a total mass of vorticity between the Kato and Prandtl layers that approaches $-m$ as the viscosity vanishes. (Since the Kato layer is much smaller than the Prandtl layer, this does not require that there be any higher concentration of vorticity in any particular portion of the Prandtl layer, though.)

Now suppose that the rate of convergence is even slower than $C \nu^{1/4}$. Then \cref{e:mDiffEst}  gives a measure of $\mathbf{M}(\Gamma_\delta^C)$ converging to $m$ well outside the Prandtl layer. This does not directly contradict any tenet of the Prandtl theory, but it suggests that for small viscosity the solution to the Navier-Stokes equations matches the solution to the Euler equations only well outside the Prandtl layer. This leads us to the following conjecture:

\begin{conj}\label{J:Prandtl}
    If the vanishing viscosity limit in ($VV$) holds at a rate slower than
    $C \nu^{\frac{1}{4}}$ in 2D then the Prandtl theory fails.
\end{conj}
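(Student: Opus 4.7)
The plan is to prove the contrapositive: assume a suitable formulation of ``Prandtl theory holds'' on $[0, T]$, and derive that $(VV)$ holds at rate $F(\nu) = O(\nu^{1/4})$, which contradicts the hypothesis. I would take ``Prandtl holds'' to mean that the Prandtl boundary-layer equations admit a solution $u^P(x_\tau, z, t)$ on $[0, T]$ satisfying the usual matching condition at $z = 0$ and sufficient decay as $z \to \iny$, and that with $d(x) = \dist(x, \Gamma)$ and $\chi$ a smooth cutoff supported in a tubular neighborhood of $\Gamma$, the expansion
\begin{align*}
    u(x, t) = \ol{u}(x, t) + \chi(d(x))\, u^P(x_\tau(x), d(x)/\sqrt{\nu}, t) + r^\nu(x, t)
\end{align*}
holds with $\norm{r^\nu}_{L^\iny(0, T; L^2(\Omega))} = o(\nu^{1/4})$. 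This is essentially what the rigorous Prandtl justifications of Sammartino--Caflisch (analytic data) and Maekawa (certain monotone profiles) produce.

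The heart of the argument is a direct $L^2$ estimate of the Prandtl corrector. Parameterizing a tubular neighborhood of $\Gamma$ by arc length $s$ along $\Gamma$ and normal distance $d$, the Euclidean volume element has Jacobian $1 + O(d)$. Substituting $z = d/\sqrt{\nu}$,
\begin{align*}
    \norm{\chi(d)\, u^P(\cdot, d/\sqrt{\nu}, t)}_{L^2(\Omega)}^2
    = \sqrt{\nu} \int_\Gamma\!\int_0^\iny \chi(\sqrt{\nu} z)^2 |u^P(s, z, t)|^2 (1 + O(\sqrt{\nu} z))\, dz\, ds
    \le C\sqrt{\nu},
\end{align*}
provided $u^P \in L^\iny(0, T; L^2(\Gamma \times [0, \iny)))$, which is a standard regularity output of the Prandtl theory. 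Hence $\norm{\chi u^P}_{L^\iny(0, T; L^2(\Omega))} \le C\nu^{1/4}$, and the triangle inequality gives
\begin{align*}
    \norm{u - \ol{u}}_{L^\iny(0, T; L^2(\Omega))}
    \le \norm{\chi u^P}_{L^\iny(0, T; L^2(\Omega))} + \norm{r^\nu}_{L^\iny(0, T; L^2(\Omega))}
    = O(\nu^{1/4}),
\end{align*}
contradicting the hypothesis of a rate strictly slower than $C\nu^{1/4}$.

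The principal obstacle is the precise formulation of ``Prandtl theory holds.'' There is no universal convention, and the truth of the conjecture depends on which one is adopted. The formulation above is consistent with existing rigorous justifications and makes the proof essentially mechanical; weaker notions---for instance, convergence only within the Prandtl layer, matching of leading tangential components alone, or asymptotic expansions in weaker topologies---would not directly give control of $\norm{r^\nu}_{L^\iny_t L^2_x}$, and would require either a different argument or a refined statement of the conjecture. A companion interpretive issue is what ``rate slower than $C\nu^{1/4}$'' is intended to mean; for the contrapositive to carry force, it should be read as asserting that $\limsup_{\nu \to 0^+} \nu^{-1/4} \norm{u - \ol{u}}_{L^\iny(0, T; L^2(\Omega))} = \iny$, rather than as a mere failure of one particular proof strategy to produce an $O(\nu^{1/4})$ bound.
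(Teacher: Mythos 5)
This statement is a conjecture: the paper offers no proof of it, only a heuristic motivation, so there is no ``paper proof'' for your argument to match. The paper's reasoning runs through the total mass of vorticity: starting from \cref{e:mDiffEst}, it observes that the bound on $\abs{(\omega, 1 - \varphi_\delta) - m}$ is useful only when $\delta - \delta^*$ is large compared with $F(\nu)^2$; for $F(\nu) = C\nu^{1/4}$ this threshold is exactly the Prandtl width $\sqrt{\nu}$, and a slower rate forces the region where $\mathbf{M}(\Gamma_\delta^C)$ is seen to approach $m$ (equivalently, where $u$ demonstrably matches $\ol{u}$) to lie well outside the Prandtl layer, which is what the author takes as evidence that the Prandtl description should then fail. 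The author explicitly declines to specify what ``fails'' means (ill-posedness versus failure of the formal asymptotics), which is why the statement remains a conjecture.

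Your proposal is a genuinely different route: you fix a precise meaning of ``Prandtl theory holds'' (validity of the boundary-layer expansion with remainder $o(\nu^{1/4})$ in $L^\iny(0,T;L^2)$) and prove the contrapositive via the scaling $\norm{\chi u^P}_{L^2(\Omega)} \le C\nu^{1/4}$, which follows correctly from the change of variables $z = d/\sqrt{\nu}$ and square-integrability of the profile in the layer variable. Within your formalization the argument is correct, and it is consistent with the rigorous justifications of Sammartino--Caflisch and Maekawa. What it buys is an actual theorem; what it costs is that nearly all the content has been moved into the hypothesis: once the remainder is assumed $o(\nu^{1/4})$ in $L^\iny_t L^2_x$, the conclusion is essentially immediate from the $\nu^{1/4}$ scale of the corrector, so the ``theorem'' is true but close to tautological. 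The paper's vorticity-mass heuristic, by contrast, is weaker as logic but makes a claim that does not presuppose any particular remainder estimate, and it connects the $\nu^{1/4}$ threshold to the independent facts in \cref{T:BoundaryLayerWidth} and \cref{T:ROC} (the Kato layer carries no mass, the total mass is zero for $t > 0$, and the compensating mass $-m$ must sit between the Kato layer and the matching region). You are right that any honest resolution of the conjecture must begin by fixing the definition you identify as the principal obstacle; it would be worth stating explicitly that under weaker readings (matching only inside the layer, or in weaker topologies) your contrapositive gives no information, so your result should be presented as a proof of one precise version of the conjecture rather than of the conjecture as stated.
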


We conjecture no further, however, as to whether the Prandtl equations become ill-posed or whether the formal asymptotics fail to hold rigorously.

%
%
\section{Some kind of convergence always happens}\label{S:SomeConvergence}

\noindent Assume that $v$ is a vector field lying in  $L^\iny([0, T]; H^1(\Omega))$. An examination of the proof given in \cite{K2008VVV} of the chain of implications in \cref{T:VVEquiv} shows that all of the conditions except (B) are still equivalent with $\ol{u}$ replaced by $v$. That is, defining
\begingroup
\allowdisplaybreaks
\begin{align*}
	(A_v) & \qquad u \to v \text{ weakly in } H
					\text{ uniformly on } [0, T], \\
	(A'_v) & \qquad u \to v \text{ weakly in } (L^2(\Omega))^d
					\text{ uniformly on } [0, T], \\
	(B_v) & \qquad u \to v \text{ in } L^\iny([0, T]; H), \\
	(C_v) & \qquad \grad u \to \grad v - \innp{\gamma_\mathbf{n} \cdot, v \mu} 
				\text{ in } ((H^1(\Omega))^{d \times d})^*
					   \text{ uniformly on } [0, T], \\
	(D_v) & \qquad \grad u \to \grad v \text{ in } (H^{-1}(\Omega))^{d \times d}
					   \text{ uniformly on } [0, T], \\
	(E_v) & \qquad \omega \to \omega(v)
					- \frac{1}{2} \innp{\gamma_\mathbf{n} (\cdot - \cdot^T),
								v \mu}
					\text{ in } 
	 				((H^1(\Omega))^{d \times d})^*
	 				   \text{ uniformly on } [0, T], \\
	(E_{2, v}) & \qquad \omega \to \omega(v) - (v \cdot \BoldTau) \mu 
				\text{ in } (H^1(\Omega))^*
					   \text{ uniformly on } [0, T], \\
	(F_{2, v}) & \qquad \omega \to \omega(v) \text{ in } H^{-1}(\Omega)
					   \text{ uniformly on } [0, T],
\end{align*}
\endgroup
we have the following theorem:
\begin{theorem}\label{T:MainResultv}
	Conditions ($A_v$), ($A'_v$), ($C_v$), ($D_v$), and ($E_v$) are equivalent.
	In 2D, conditions ($E_{2,v}$) and, when $\Omega$ is simply connected,
	($F_{2,v}$) are equivalent to the other conditions.
	Also, $(B_v)$ implies all of the other conditions. Finally,
	the same equivalences hold if we replace each
	convergence above with the convergence of a subsequence.
\end{theorem}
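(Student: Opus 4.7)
The strategy is to audit the chain of implications used in \cite{K2008VVV} to prove \cref{T:VVEquiv} and verify that the hypothesis ``$\ol u$ solves $(EE)$'' is invoked in exactly one place: the step that produces the strong $L^2$ limit $(B)$. My expectation, matching the author's comment, is that every other step in that argument rests only on algebraic manipulations, integration by parts against $H^1$ test fields, properties of the trace operator $\gamma_\n$, and basic weak/weak-$*$ duality. Once this audit is done, the conditions $(A_v)$, $(A'_v)$, $(C_v)$, $(D_v)$, $(E_v)$, $(E_{2,v})$, $(F_{2,v})$ are obtained by literally substituting $v$ for $\ol u$ throughout, and the proofs of the implications among them transfer verbatim.

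The identity underlying the bridge from the velocity conditions $(A_v), (A'_v)$ to the gradient and vorticity conditions $(C_v), (D_v), (E_v)$ is the integration by parts
\[
(\grad u - \grad v, M)
    = -(u - v, \dv M) - \int_\Gamma (M \n) \cdot v,
\]
valid for any $M \in (H^1(\Omega))^{d \times d}$, which uses only the no-slip condition $u = 0$ on $\Gamma$ and the boundary trace $\gamma_\n v$ coming from $v \in H^1(\Omega)$. Weak $L^2$ convergence $u \to v$ uniformly in $t$ makes the first right-hand term vanish uniformly in $t$, leaving precisely the boundary correction $\innp{\gamma_\n \cdot, v\mu}$ that appears in $(C_v)$; the antisymmetric-part manipulation recorded in the proof of \cref{P:EquivE} then converts the gradient form into the vorticity forms $(E_v)$ and, via the scalar-curl identification in 2D, into $(E_{2,v})$. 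The passage to $(F_{2,v})$ and the appearance of simple connectedness there are the same potential-theoretic step flagged in the footnote of \cref{T:VVEquiv}, and again make no use of $v$'s dynamics. The one-way implication $(B_v) \Rightarrow (A_v)$ is immediate since strong $L^2$ convergence implies weak; the converse fails in general because the original implication $(A) \Rightarrow (B)$ used the Euler energy identity for $\ol u$, which is not available for arbitrary $v$.

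For the subsequence assertion, each of the implications above has the form of a pointwise-in-$\nu$ estimate followed by a limit as $\nu \to 0$, without any Cauchy-type or diagonalization argument that could distinguish a full sequence from a subsequence. Replacing $\nu \to 0$ by $\nu_k \to 0$ throughout therefore yields the corresponding equivalences for subsequences without modification. The main obstacle I anticipate is the bookkeeping required to track, line by line in \cite{K2008VVV}, the few places where the equation $(EE)$ for $\ol u$ might inadvertently have been used; \cref{P:EquivE} already demonstrates precisely the kind of purely algebraic/duality manipulation involved, and so no surprises are expected.
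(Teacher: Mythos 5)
Your proposal is correct and is essentially the paper's own argument: the paper offers no detailed proof of \cref{T:MainResultv}, relying exactly on the audit you describe — that every implication in the proof of \cref{T:VVEquiv} from \cite{K2008VVV} except $(A) \implies (B)$ is a duality/integration-by-parts computation using only $u|_\Gamma = 0$ and the trace of $v$, so substituting $v$ for $\ol{u}$ (and $\nu_k$ for $\nu$) transfers everything verbatim. Your identification of the single place where the Euler dynamics enter (Kato's energy argument for $(A)\implies(B)$) and of the key identity $(\grad u - \grad v, M) = -(u - v, \dv M) - \int_\Gamma (M\n)\cdot v$ matches the intended reasoning.
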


But we also have the following:
\begin{theorem}\label{T:SubsequenceConvergence}
There exists $v$ in $L^\iny([0, T]; H)$ such that a subsequence $(u_\nu)$ converges weakly to $v$ in $L^\iny([0, T]; H)$.
\end{theorem}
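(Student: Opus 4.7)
The plan is to extract a subsequential limit using the uniform energy estimate for Leray--Hopf solutions together with an Arzel\`a--Ascoli argument in the weak topology of $H$, so as to produce convergence in the sense of condition $(A_v)$, that is, $(u_\nu(t), \phi) \to (v(t), \phi)$ uniformly on $[0,T]$ for every $\phi \in H$.

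First I would invoke the standard energy inequality for weak solutions of $(NS)$:
\begin{align*}
    \norm{u_\nu(t)}_{L^2}^2 + 2 \nu \int_0^t \norm{\grad u_\nu}_{L^2}^2 \, ds
        \le \norm{u_0}_{L^2}^2 + 2 \int_0^t (f, u_\nu) \, ds.
\end{align*}
By Gr\"onwall, this yields a bound $\norm{u_\nu}_{L^\iny([0,T];L^2)} \le C$ independent of $\nu$, and since each $u_\nu(t)$ is divergence-free with vanishing normal trace, $\{u_\nu\}$ is uniformly bounded in $L^\iny([0,T];H)$.

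Second, I would establish weak equicontinuity in time. Testing the weak form of $(NS)$ against $\phi \in V \cap W^{1,\iny}(\Omega)$ gives
\begin{align*}
    (u_\nu(t) - u_\nu(s), \phi)
        = \int_s^t \brac{(u_\nu \otimes u_\nu, \grad \phi)
            - \nu (\grad u_\nu, \grad \phi) + (f, \phi)} \, dr.
\end{align*}
The convection term is bounded by $\norm{u_\nu}_{L^\iny L^2}^2 \norm{\grad \phi}_{L^\iny} (t-s)$; the forcing term by $\norm{f}_{L^1(0,T;L^2)} \norm{\phi}_{L^2}$ times a continuous modulus; and by Cauchy--Schwarz in time the viscous term is bounded by $\nu^{1/2}(t-s)^{1/2} \norm{\grad \phi}_{L^2} (\nu \norm{\grad u_\nu}_{L^2(0,T;L^2)}^2)^{1/2}$, which is $O(\nu^{1/2}(t-s)^{1/2})$. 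Thus, for each such $\phi$, the family $t \mapsto (u_\nu(t), \phi)$ is uniformly bounded and uniformly equicontinuous in $\nu$.

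Third, I would pick a countable set $\{\phi_n\}_{n \ge 1} \subset V \cap W^{1,\iny}(\Omega)$ that is dense in $H$ (such a set exists since $H$ is separable and $C_c^\iny$ divergence-free vector fields are dense in $H$), apply Arzel\`a--Ascoli to each $\phi_n$, and extract a diagonal subsequence $\nu_k \to 0$ such that $(u_{\nu_k}(\cdot), \phi_n) \to g_n(\cdot)$ uniformly on $[0,T]$ for every $n$. The uniform $L^2$ bound on $u_{\nu_k}$ together with the density of $\{\phi_n\}$ in $H$ allows one to extend this to $(u_{\nu_k}(t), \phi) \to g_\phi(t)$ uniformly on $[0,T]$ for every $\phi \in H$, with $\phi \mapsto g_\phi(t)$ a continuous linear functional on $H$ of norm at most $C$. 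Riesz representation then defines $v(t) \in H$ with $(v(t), \phi) = g_\phi(t)$ and $\norm{v(t)}_H \le C$, so $v \in L^\iny([0,T]; H)$, which is the claim.

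The main technical point, and the only step that uses anything beyond soft functional analysis, is the time equicontinuity estimate above; the slight subtlety there is that the convection term forces us to restrict test functions to $W^{1,\iny}$, so that the density step in the third paragraph must be handled via the uniform $L^2$ bound on $u_{\nu_k}$. Everything else is Banach--Alaoglu in disguise applied to the separable Hilbert space $H$.
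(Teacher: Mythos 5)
Your proof is correct, but it takes a genuinely different route from the paper's. The paper uses nothing about the equation beyond the energy bound: it lifts $u_\nu$ to a potential bounded in $L^\infty([0,T];H_0^1(\Omega))$ --- the stream function $\psi_\nu$ with $u_\nu=\grad^\perp\psi_\nu$ in the 2D simply connected case, and a matrix potential $M_\nu$ with $u_\nu=\dv M_\nu$, $M_\nu\in(H_0^1(\Omega))^{d\times d}$, in general --- extracts a weak-$*$ convergent subsequence of the potentials by weak compactness, defines $v$ as the corresponding derivative of the limit, and transfers the pairing $(u_\nu,g)$ to a pairing of $\psi_\nu$ (or $M_\nu$) against an object in $H^{-1}$, where the weak-$*$ convergence applies. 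You instead use the equation itself: the weak formulation between times $s$ and $t$ gives uniform-in-$\nu$ equicontinuity of $t\mapsto(u_\nu(t),\phi)$ for $\phi\in V\cap W^{1,\infty}$, and Arzel\`a--Ascoli plus a diagonal argument over a countable dense family produces the limit. The payoff of your route is a stronger mode of convergence: $(u_{\nu_k}(t),\phi)\to(v(t),\phi)$ uniformly on $[0,T]$ for every $\phi\in H$, which is precisely condition $(A_v)$ along a subsequence --- the hypothesis actually fed into \cref{T:MainResultv} in the discussion following the theorem --- whereas weak-$*$ compactness alone does not by itself deliver uniformity in $t$. The cost is that you must invoke the weak formulation identity between arbitrary times for Leray--Hopf solutions (standard, using the weakly-$L^2$-continuous representative) and restrict to $W^{1,\infty}$ test functions to control the convection term, a restriction you correctly remove in the density step via the uniform $L^2$ bound. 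Both arguments are sound.
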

\begin{proof}
The argument for a simply connected domain in 2D is slightly simpler so we give it first. 
The sequence $(u_\nu)$ is bounded in $L^\iny([0, T]; H)$ by the basic energy inequality for the Navier-Stokes equations. Letting $\psi_\nu$ be the stream function for $u_\nu$ vanishing on $\Gamma$, it follows by the Poincare inequality that $(\psi_\nu)$ is bounded in $L^\iny([0, T]; H_0^1(\Omega))$. Hence, there exists a subsequence, which we relabel as $(\psi_\nu)$, converging strongly in $L^\iny([0, T]; L^2(\Omega))$ and weak-* in $L^\iny([0, T]; H_0^1(\Omega))$ to some $\psi$ lying in $L^\iny([0, T]; H_0^1(\Omega))$. Let $v = \grad^\perp \psi$.

Let $g$ be any element of $L^\iny([0, T]; H)$. Then
\begin{align*}
	(u_\nu, g)
		&= (\grad^\perp \psi_\nu, g)
		= - (\grad \psi_\nu, g^\perp)
		= (\psi_\nu, - \dv g^\perp)
		= (\psi_\nu, \omega(g)) \\
		&\to (\psi, \omega(g))
		= (v, g).
\end{align*}
In the third equality we used the membership of $\psi_v$ in $H_0^1(\Omega)$ and the last equality follows in the same way as the first four. The convergence follows from the weak-* convergence of $\psi_\nu$ in in $L^\iny([0, T]; H_0^1(\Omega))$ and the membership of $\omega(g)$ in $H^{-1}(\Omega)$. 

In dimension $d \ge 3$, let $M_\nu$ in $(H_0^1(\Omega))^d$ satisfy $u_\nu = \dv M_\nu$; this is possible by Corollary 7.5 of \cite{K2008VVV}. Arguing as before it follows that there exists a subsequence, which we relabel as $(M_\nu)$, converging strongly in $L^\iny([0, T]; L^2(\Omega))$ and weak-* in $L^\iny([0, T]; H_0^1(\Omega))$ to some $M$ that lies in $L^\iny([0, T]; (H_0^1(\Omega))^{d \times d})$. Let $v = \dv M$.

Let $g$ be any element of $L^\iny([0, T]; H)$. Then
\begin{align*}
	(u_\nu, g)
		&= (\dv M_\nu, g)
		= -(M_\nu, \grad g)
		\to - (M, \grad g)
		= (v, g),
\end{align*}
establishing convergence as before.
\end{proof}

It follows from \refTAnd{MainResultv}{SubsequenceConvergence} that all of the convergences in \cref{T:VVEquiv} hold except for $(B)$, but for a subseqence of solutions and the convergence is to some velocity field $v$ lying only in $L^\iny([0, T]; H)$ and not necessarily in $L^\iny([0, T]; H \cap H^1(\Omega))$ . In particular, we do not know if $v$ is a solution to the Euler equations, and, in fact, there is no reason to expect that it is.

%
%
\Ignore{ 
\begin{lemma}\label{L:H1Dual}
    $H^{-1}(\Omega)$ is the image under $\Delta$ of $H^1_0(\Omega)$ and the image
    under $\dv$ of $(L^2(\Omega))^d$.
\end{lemma}
\begin{proof}
    Let $w$ be in $H^{-1}(\Omega) = H^1_0(\Omega)^*$. By the density of $\Cal{D}(\Omega)$ in
    $H^1_0(\Omega)$ the value of $(w, \varphi)_{H_0^1(\Omega), H_0^1(\Omega)^*}$ on
    test functions $\varphi$ in $\Cal{D}(\Omega)$ is enough to uniquely determine
    $w$. By the Riesz representation theorem there exists a $u$ in $H^1_0(\Omega)$
    such that for all $\varphi$ in $H^1_0(\Omega)$ and hence in $\Cal{D}(\Omega)$,
    \begin{align*}
        (w, \varphi)_{H_0^1(\Omega), H_0^1(\Omega)^*}
           &= \innp{u, \varphi}
            = \int_\Omega \grad u \cdot \grad \varphi
            = - \int_\Omega \Delta u \cdot \varphi \\
        &= (-\Delta u, \varphi)_{\Cal{D}(\Omega), \Cal{D}(\Omega)^*},
    \end{align*}
    which shows that $w$ as a linear functional is equal to $- \Delta u$ as a
    distribution, and the two can be identified.
    Because the identification of $w$ and $u$ in the Riesz representation
    theorem is bijective, $H^{-1}(\Omega) = \Delta H^1(\Omega)$.

    Since $\Delta = \dv \grad$, it also follows that $H^{-1}(\Omega) \subseteq
    \dv (L^2(\Omega))^d$. To show the opposite containment, let $f$ be in
    $(L^2(\Omega))^d$. Then by the Hodge decomposition, we can write
    \begin{align*}
        f = \grad u + g
    \end{align*}
    with $u$ in $H^1(\Omega)$ and $g$ in $(L^2(\Omega))^d$ with $\dv g = 0$ as a
    distribution. Then for any $\varphi$ in $\Cal{D}(\Omega)$,
    \begin{align*}
       &(\dv f, \varphi)_{\Cal{D}(\Omega), \Cal{D}(\Omega)^*}
            = - (f, \grad \varphi)_{\Cal{D}(\Omega), \Cal{D}(\Omega)^*} \\
           &\qquad= - (\grad u, \grad \varphi)_{\Cal{D}(\Omega), \Cal{D}(\Omega)^*}
                    - (g, \grad \varphi)_{\Cal{D}(\Omega), \Cal{D}(\Omega)^*} \\
           &\qquad= - \innp{u, \varphi} + (\dv g, \varphi)_{\Cal{D}(\Omega), \Cal{D}(\Omega)^*}
            = \innp{-u, \varphi}
            = (w, \varphi)_{H_0^1(\Omega), H_0^1(\Omega)^*}
    \end{align*}
    for some $w$ in $H^1_0(\Omega)^*$ by the Riesz representation theorem. It
    follows that $\dv f$ and $w$ can be identified, using the same identification
    as before. What we have shown is that $\dv (L^2(\Omega))^d
    \subseteq H^{-1}(\Omega)$, completing the proof.
\end{proof}
} 

\Ignore{ 
%
%
\section{Convergence to another solution to the Euler equations?}

\noindent One could imagine that the solutions $u = u_\nu$ to the Navier-Stokes equations converge, in the limit, to a solution to the Euler equations, but one different from $\ol{u}$ and possibly with lower regularity. Since such solutions are determined by their initial velocity, this means that the vector $v$ to which $(u_\nu)$ converges has initial velocity $v^0 \ne \ol{u}^0$. (This conclusion would be true even if $v$ had so little regularity that it had not been determined uniquely by its initial velocity.)

Now, $\ol{u}(t)$ is continuous in $H$, since it is a strong solution, as too, if we restrict ourselves to two dimensions, is $u(t)$. If $v$ has bounded vorticity, say, then $v(t)$ is continuous in $H$ as well. It would seem ......
} 

\Ignore{ 
%
%
\section{Physical meaning of the vortex sheet on the boundary?}



\noindent Calling the term $\omega^* := - (\ol{u} \cdot \BoldTau) \mu$ (in 2D) a \textit{vortex sheet} is misleading, and I regret referring to it that way in \cite{K2008VVV} without some words of explanation. The problem is that we cannot interpret $\omega^*$ as a distribution on $\Omega$ because applying it to any function in $\Cal{D}(\Omega)$ gives zero. And how could we recover the velocity associated to $\omega^*$?

One natural, if unjustified, way to try to interpret $\omega^*$ is to extend it to the whole space so that it is a measure supported along the curve $\Gamma$. To determine the associated velocity $v$, let $\Omega_- = \Omega$ and $\Omega_+ = \Omega^C$ with $v_\pm = v|_{\Omega_\pm}$, and let $[v] = v_+ - v_-$. Then as on page 364 of \cite{MB2002}, we must have
\begin{align*}
	[v] \cdot \mathbf{n} = 0, \quad [v] \cdot \BoldTau = - \ol{u} \cdot \BoldTau.
\end{align*}
That is, the normal component of the velocity is continuous across the boundary while the jump in the tangential component is the strength of the vortex sheet.

Now, let us assume that the vanishing viscosity limit holds, so that the limiting vorticity is $\ol{\omega} - (\ol{u} \cdot \BoldTau) \mu = \ol{\omega} - \omega^*$. Since $u \to \ol{u}$ strongly with $\omega(\ol{u}) = \ol{\omega}$, the term $\ol{\omega}$ has to account for all of the kinetic energy of the fluid. If the limit is to be physically meaningful, certainly energy cannot be \textit{gained} (though it conceivably could be lost to diffusion, even in the limit). Thus, we would need to have the velocity $v$ associated with $\omega^*$ vanish in $\Omega$; in other words, $v_- \equiv 0$. This leads to $\omega(v_+) = \dv v_+ = 0$ in $\Omega_+$, $v_+ \cdot \mathbf{n} = 0$  on $\Gamma$, $v_+ \cdot \BoldTau = \ol{u} \cdot \BoldTau$ on $\Gamma$, with some conditions on $v_+$ at infinity. But this is an overdetermined set of equations. In fact, if $\Omega$ is simply connected then $\Omega_+$ is an exterior domain, and if we ignore the last equation, then up to a multiplicative constant there is a unique solution vanishing at infinity. This cannot, in general, be reconciled with the need for the last equation to hold.

Actually, perhaps the correct physical interpretation of $\omega^*$ comes from the observation in the first paragraph of this section: that it has no physical effect at all since, as a distribution, it is zero. If the vanishing viscosity limit holds, it is reasonable to assume that if there is a boundary separation of the vorticity it weakens in magnitude as the viscosity vanishes and so contributes nothing in the limit.


Or, looked at another way, if in looking for the velocity $v$ corresponding to the vortex sheet $\omega$ as we did above we assume that $v$ is zero outside $\Omega$, we would obtain
\begin{align*}
	v \cdot \mathbf{n} = 0, \quad v \cdot \BoldTau =  \ol{u} \cdot \BoldTau
\end{align*}
on the boundary. For a very small viscosity, then, $u$ has almost the same effect as $\ol{u}$ in the interior of $\Omega$, while the vortex sheet that is forming on the boundary as the viscosity vanishes has nearly the same effect as $\ol{u}$ on the boundary.
} 

\newpage
%
%
\Part{Theme II: Kato's Conditions}

%
%
\section{An equivalent 2D condition on the boundary}\label{S:EquivCondition}

\noindent

\begin{theorem}\label{T:BoundaryIffCondition}
For ($VV$) to hold in 2D it is necessary and sufficient that
\begin{align}\label{e:BoundaryCondition2D}
	\nu \int_0^T \int_\Gamma \omega \, \ol{u} \cdot \BoldTau
		\to 0
		\text{ as } \nu \to 0.
\end{align}
\end{theorem}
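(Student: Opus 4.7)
My plan is to derive a Kato-style energy identity for $w := u - \ol{u}$ in which the quantity $\nu \int_0^T \int_\Gamma \omega \, \ol{u} \cdot \BoldTau$ appears as the only term whose smallness is not automatic; both directions of the theorem then fall out of this identity.

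First I would compute $\tfrac{1}{2}\tfrac{d}{dt}\norm{w}_{L^2}^2$ by testing
\begin{align*}
    \prt_t w
        = \nu \Delta u - \pr{u \cdot \grad u - \ol{u} \cdot \grad \ol{u}}
            - \grad(p - \ol{p})
\end{align*}
against $w$. The pressure term vanishes because $\dv w = 0$ and $w \cdot \n = 0$ on $\Gamma$, and a standard rearrangement (using $\dv u = 0$, $u \cdot \n = 0$) reduces the nonlinear contribution to $-(w \cdot \grad \ol{u}, w)$. For the viscous term, I would write $\Delta u = \Delta w + \Delta \ol{u}$ and integrate by parts in each piece. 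Since $w = -\ol{u}$ on $\Gamma$, two boundary integrals appear, namely $-\nu \int_\Gamma \partial_\n w \cdot \ol{u}$ and $-\nu \int_\Gamma \partial_\n \ol{u} \cdot \ol{u}$, together with the interior pieces $-\nu \norm{\grad w}_{L^2}^2$ and $-\nu (\grad \ol{u}, \grad w)$.

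Next I would carry out the 2D boundary identification: because $u \equiv 0$ on $\Gamma$, all tangential derivatives of $u$ vanish on $\Gamma$, and flattening the boundary locally shows $\partial_\n u \cdot \ol{u} = \omega \, \ol{u} \cdot \BoldTau$ pointwise on $\Gamma$ (the $\partial_1 u^2$ piece of $\omega$ drops out, and $\ol{u} \cdot \n = 0$ eliminates the normal component of $\ol{u}$). Writing $\partial_\n w = \partial_\n u - \partial_\n \ol{u}$ and substituting, the two $\int_\Gamma \partial_\n \ol{u} \cdot \ol{u}$ contributions cancel, leaving the clean identity
\begin{align*}
    \tfrac{1}{2}\tfrac{d}{dt} \norm{w}_{L^2}^2 + \nu \norm{\grad w}_{L^2}^2
        = -\nu \int_\Gamma \omega \, \ol{u} \cdot \BoldTau
          - \nu (\grad \ol{u}, \grad w)
          - (w \cdot \grad \ol{u}, w).
\end{align*}

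For sufficiency I would integrate this identity from $0$ to $T$, use $w(0) = 0$, absorb $-\nu(\grad \ol{u}, \grad w)$ via Young's inequality at the cost of $\tfrac{\nu}{2} \norm{\grad w}_{L^2}^2$ on the left and a $\nu \int_0^T \norm{\grad \ol{u}}_{L^2}^2 = O(\nu)$ remainder, and apply Gr\"onwall to the $(w \cdot \grad \ol{u}, w)$ term using $\ol{u} \in L^\iny(0,T; C^{1+\eps})$; the boundary integral is the hypothesis, yielding ($VV$). For necessity I would solve the identity for $\nu \int_0^T \int_\Gamma \omega \, \ol{u} \cdot \BoldTau$ and show each of the other terms vanishes in the limit: $\norm{w(T)}_{L^2}^2$ by ($VV$), $\int_0^T (w \cdot \grad \ol{u}, w)$ by ($VV$) and smoothness of $\ol{u}$, and both viscous contributions by Kato's equivalent characterization $\nu \int_0^T \norm{\grad u}_{L^2}^2 \to 0$ combined with the $O(\nu)$ bound on $\nu \norm{\grad \ol{u}}_{L^2}^2$.

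The main obstacle is the boundary identification $\partial_\n u \cdot \ol{u} = \omega \, \ol{u} \cdot \BoldTau$ at the level of a weak solution $u$, since $\omega$ need not have a classical trace on $\Gamma$; I expect to handle this via a regularization or by reinterpreting both sides in the dual-pairing sense provided by $H^1$ trace theory, together with $\omega \in L^2$. This is also the one step where two-dimensionality is essential, through the scalar structure of the 2D curl; everything else in the argument is a controlled energy estimate.
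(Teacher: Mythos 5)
Your proposal is correct and follows essentially the same route as the paper: the same energy identity for $w = u - \ol{u}$ (the paper keeps $\nu\norm{\grad u}_{L^2}^2$ on the left where you keep $\nu\norm{\grad w}_{L^2}^2$, an algebraically equivalent rearrangement of the cross terms), the same 2D boundary identification $(\grad u \cdot \n)\cdot\ol{u} = \omega\,\ol{u}\cdot\BoldTau$, Gr\"onwall for sufficiency, and Kato's characterization $\nu\int_0^T\norm{\grad u}_{L^2}^2 \to 0$ for necessity. The one step you defer --- giving meaning to the trace of $\omega$ on $\Gamma$ for a weak solution --- the paper settles not by regularization but by 2D parabolic smoothing ($\sqrt{t}\,u \in L^2(0,T;H^2(\Omega)) \cap L^\iny(0,T;V)$, Theorem III.3.10 of Temam), which yields an honest $L^2$ boundary trace for $t>0$ and, via a short lemma on the time derivative, justifies integrating the identity down to $t=0$.
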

\begin{proof}
Since the solution is in 2D and $f \in L^2(0, T; H) \supseteq C^1_{loc}(\R; C^1(\Omega))$, Theorem III.3.10 of \cite{T2001} gives
\begin{align}\label{e:RegTwoD}
    \begin{split}
        &\sqrt{t} u \in L^2(0, T; H^2(\Omega)) \cap L^\iny(0, T; V), \\
        &\sqrt{t} \prt_t u \in L^2(0, T; H),
    \end{split}
\end{align}
so $\omega(t)$ is defined in the sense of a trace on the boundary. This shows that the condition in \cref{e:BoundaryCondition2D} is well-defined.

For simplicity we give the argument with $f = 0$. We perform the calculations using the $d$-dimensional form of the vorticity in \cref{e:VorticityRd}, specializing to 2D only at the end. (The argument applies formally in higher dimensions; see \cref{R:BoundaryConditionInRd}.)

Subtracting ($EE$) from ($NS$), multiplying by $w = u - \ol{u}$, integrating over $\Omega$, using \cref{L:TimeDerivAndIntegration} for the time derivative, and $u(t) \in H^2(\Omega)$, $t > 0$, for the spatial integrations by parts, leads to
\begin{align}\label{e:BasicEnergyEq}
    \begin{split}
	\frac{1}{2} \diff{}{t} &\norm{w}_{L^2}^2
		+ \nu \norm{\grad u}_{L^2}^2 \\
		&= - (w \cdot \grad \ol{u}, w)
			+ \nu(\grad u, \grad \ol{u})
				- \nu \int_\Gamma (\grad u \cdot \mathbf{n}) \cdot \ol{u}.
	\end{split}
\end{align}

Now,
\begin{align*}
    \begin{split}
    (\grad u \cdot \mathbf{n}) \cdot \ol{u}
        &= 2 (\frac{\grad u - (\grad u)^T}{2} \cdot \mathbf{n})
                \cdot \ol{u}
            + ((\grad u)^T \cdot \n) \cdot \ol{u} \\
        &= 2 (\omega(u) \cdot \mathbf{n})
                \cdot \ol{u}
            + ((\grad u)^T \cdot \n) \cdot \ol{u}.
    \end{split}
\end{align*}
But,
\begin{align*}
    \int_\Gamma &((\grad u)^T \cdot \n) \cdot \ol{u}
        = \int_\Gamma \prt_i u^j n^j \ol{u}^i
        = \frac{1}{2} \int_\Gamma \prt_i(u \cdot \n) \ol{u}^i \\
        &= \frac{1}{2} \int_\Gamma \grad (u \cdot \n) \cdot \ol{u}
        = 0,
\end{align*}
since $u \cdot \n = 0$ on $\Gamma$ and $\ol{u}$ is tangent to $\Gamma$. Hence,
\begin{align}\label{e:gradunuolEq}
    \int_\Gamma (\grad u \cdot \mathbf{n}) \cdot \ol{u}
        = 2 (\omega(u) \cdot \mathbf{n})
                \cdot \ol{u}
\end{align}
and
\begin{align*}
	\frac{1}{2} \diff{}{t} &\norm{w}_{L^2}^2
		+ \nu \norm{\grad u}_{L^2}^2 \\
		&= - (w \cdot \grad \ol{u}, w)
			+ \nu(\grad u, \grad \ol{u})
				- 2 \nu \int_\Gamma (\omega(u) \cdot \mathbf{n})
                \cdot \ol{u}.
\end{align*}

By virtue of \cref{L:TimeDerivAndIntegration}, we can integrate over time to give
\begin{align}\label{e:VVArg}
	\begin{split}
	&\norm{w(T)}_{L^2}^2
		+ 2 \nu \int_0^T \norm{\grad u}_{L^2}^2
		= - 2 \int_0^T (w \cdot \grad \ol{u}, w)
			+ 2 \nu \int_0^T (\grad u, \grad \ol{u}) \\
		&\qquad - 2 \nu \int_0^T \int_\Gamma (\omega(u)
		    \cdot \mathbf{n}) \cdot \ol{u}.
	\end{split}
\end{align}

In two dimensions, we have (see (4.2) of \cite{KNavier})
\begin{align}\label{e:gradunomega}
	(\grad u \cdot \mathbf{n}) \cdot \ol{u}
		= ((\grad u \cdot \mathbf{n}) \cdot \BoldTau) (\ol{u} \cdot \BoldTau)
		= \omega(u) \, \ol{u} \cdot \BoldTau,
\end{align}
and \cref{e:VVArg} can be written
\begin{align}\label{e:VVArg2D}
	\begin{split}
	&\norm{w(T)}_{L^2}^2
		+ 2 \nu \int_0^T \norm{\grad u}_{L^2}^2
		= - 2 \int_0^T (w \cdot \grad \ol{u}, w)
			+ 2 \nu \int_0^T (\grad u, \grad \ol{u}) \\
		&\qquad -  \nu \int_0^T \int_\Gamma \omega(u) \, \ol{u} \cdot \BoldTau.
	\end{split}
\end{align}

The sufficiency of \refE{BoundaryCondition2D} for the vanishing viscosity limit ($VV$) to hold (and hence for  the other conditions in \cref{T:VVEquiv} to hold) follows from the bounds,
\begin{align*}
	\abs{(w \cdot \grad \ol{u}, w)}
		&\le \norm{\grad \ol{u}}_{L^\iny([0, T] \times \Omega)}
			\norm{w}_{L^2}^2
			\le C \norm{w}_{L^2}^2, \\
		\nu \int_0^T \abs{(\grad u, \grad \ol{u})}
			&\le \sqrt{\nu} \norm{\grad \ol{u}}_{L^2([0, T] \times \Omega)}
			\sqrt{\nu} \norm{\grad u}_{L^2([0, T] \times \Omega)}
				\le C \sqrt{\nu},
\end{align*}
and Gronwall's inequality.

Proving the necessity of \refE{BoundaryCondition2D} is just as easy. Assume that $(VV)$ holds, so that $\norm{w}_{L^\iny([0, T]; L^2(\Omega))} \to 0$. Then by the two inequalities above, the first two terms on the right-hand side of \refE{VVArg2D} vanish with the viscosity as does the first term on the left-hand side. The second term on the left-hand side vanishes as proven in \cite{Kato1983} (it follows from a simple argument using the energy equalities for ($NS$) and ($E$)). It follows that, of necessity, \refE{BoundaryCondition2D} holds.
\Ignore{ 
The reason this argument is formal is twofold. First, $w$ is not a valid test function in the weak formulation of the Navier-Stokes equations because it does not vanish on the boundary and because it varies in time. Beyond time zero the solution has as much regularity as the boundary allows \ToDo{But only up to a finite time; this is a factor to deal with}, so this is a problem only when trying to reach a conclusion after integrating in time down to time zero. This is the second reason the argument is formal: in obtaining \refE{VVArg} we act as though $w$ is strongly continuous in time down to time zero. This is true in 2D, where this part of the argument is not formal, but only weak continuity is known in higher dimensions. (This is also the reason we need assume no additional regularity for the initial velocity in 2D.)

To get around these difficulties, we derive \refE{VVArg} rigorously.
Choose a sequence $(h_n)$ of nonnegative functions in $C_0^\iny((0, T])$ such that $h_n \equiv 1$ on the interval $[n^{-1}, T]$ with $h_n$ strictly increasing on $[0, n^{-1}]$. Then $h'_n = g_n \ge 0$ with $g_n \equiv 0$ on $[n^{-1}, T]$. Observe that $\smallnorm{g_n}_{L^1([0, T])} = 1$.
Letting $w = u - \ol{u}$ as before, because $h_n w$ vanishes at time zero we can legitimately subtract ($EE$) from ($NS$), multiply by $h_n w$, and integrate over $\Omega$ to obtain, in place of \refE{BasicEnergyEq},
\begingroup
\allowdisplaybreaks
\begin{align*} 
    \begin{split}
	\frac{1}{2} \diff{}{t} &\smallnorm{h_n^{1/2} w}_{L^2}^2
	    - \frac{1}{2} \int_\Omega h_n'(t) \abs{w}^2
		+ \nu (\grad u, \grad (h_n u)) \\
		&= - (w \cdot \grad \ol{u}, h_n w)
		    - (u \cdot \grad w, h_n w)
			+ \nu(\grad u, \grad (h_n \ol{u})) \\
		&\qquad\qquad
				- \nu \int_\Gamma (\grad u \cdot \mathbf{n})
				\cdot (h_n \ol{u}) \\
		&= - (w \cdot \grad \ol{u}, h_n w)
			+ \nu(\grad u, \grad (h_n \ol{u}))
				- \nu \int_\Gamma (\grad u \cdot \mathbf{n})
				\cdot (h_n \ol{u}),
	\end{split}
\end{align*}
\endgroup
since $(u \cdot \grad w, h_n w) =  h_n(u \cdot \grad w, w) = 0$.
Integrating in time gives
\begingroup
\allowdisplaybreaks
\begin{align*}
	&\smallnorm{w(T)}_{L^2}^2
	    - \smallnorm{h_n^{1/2} w(0)}_{L^2}^2
	    - \int_0^T \int_\Omega h_n' \abs{w}^2
		+ 2 \nu \int_0^T (\grad u, \grad (h_n u)) \\
		&\qquad
		    = - 2 \int_0^T (w \cdot \grad \ol{u}, h_n w)
			+ 2 \int_0^T \nu(\grad u, \grad (h_n \ol{u})) \\
		    &\qquad\qquad\qquad\qquad
				- 2 \int_0^T \nu \int_\Gamma (\grad u \cdot \mathbf{n})
				\cdot (h_n \ol{u}).
\end{align*}
\endgroup
The second term on the left hand side vanishes because $h_n(0) = 0$. For the four terms containing $h_n$ without derivative, the $h_n$ becomes $1$ in the limit as $n \to \iny$. This leaves the one term containing $h_n'$.

Now, $\ol{u}(t)$ is continuous in $H$ and in 2D $u(t)$ is also continuous in $H$. Hence, in 2D $w(t)$ is continuous in $H$. In 3D if we assume that $u_0 \in V$ then $u(t)$ is continuous in $H$ (in fact, in $V$) up to some finite time, $T^* > 0$. Hence, in 3D, $w(t)$ is continuous in $H$ on $[0, T^*)$; $T^*$ may depend on $\nu$, but we will take $n$ to 0 before taking $\nu$ to $0$, so this will not matter. Hence, $F(s) = \norm{w(s)}^2$ is continuous on $[0, T^*)$ with $T^* = T$ in 2D, so
\ToDo{Does the $0 \le$ really hold? I don't think so.}
\begin{align*}
    0
        &\le \lim_{n \to \iny} \int_0^T \int_\Omega h_n' \abs{w}^2
        = \lim_{n \to \iny} \int_0^T g_n(s) F(s) \, ds \\
        &= \lim_{n \to \iny} \int_0^{\frac{1}{n}} g_n(s) F(s) \, ds
        \le \norm{g_n}_{L^1} \norm{F}_{L^\iny \pr{0, \frac{1}{n}}} 
        = 0.
\end{align*}
This gives us \refE{VVArg}.
} 
\end{proof}

\begin{remark}\label{R:ROC}
    It follows from the proof of \refT{BoundaryIffCondition} that in 2D,
    \begin{align*}
        \norm{u(t) - \ol{u}(t)}
            \le C \brac{\nu^{\frac{1}{4}}
                + \abs{\nu \int_0^T \int_\Gamma \omega \, \ol{u}
                    \cdot \BoldTau}^{\frac{1}{2}}} e^{C t}.
    \end{align*}
    Suppose that $\ol{u}_0$ is smooth enough that
    $\Delta \ol{u} \in L^\iny([0, T] \times \Omega)$.
    Then before integrating to obtain \cref{e:BasicEnergyEq} we
    can replace the term $\nu (\Delta u, w)$ with
    $\nu (\Delta w, w) + \nu (\Delta \ol{u}, w)$.
    Integrating by parts gives
    \begin{align*}
        \nu (\Delta w, w)
            = \nu \norm{\grad w}_{L^2}^2,
    \end{align*}
    and we also have,
    \begin{align*}
        \nu (\Delta \ol{u}, w)
            \le \nu \norm{\Delta \ol{u}}_{L^2} \norm{w}_{L^2}
            \le \frac{\nu^2}{2} \norm{\Delta \ol{u}}_{L^2}^2
                + \frac{1}{2} \norm{w}_{L^2}^2.
    \end{align*}
    This leads to the bound,
    \begin{align*}
        \norm{u(t) - \ol{u}(t)}_{L^2}
            \le C \brac{\nu
                + \abs{\nu \int_0^T \int_\Gamma \omega \, \ol{u}
                    \cdot \BoldTau}^{\frac{1}{2}}} e^{C t}
    \end{align*}
    (and also $\norm{u - \ol{u}}_{L^2(0, T; H^1)} \le C \nu^{1/2} e^{Ct}$).
    Thus, the bound we obtain on the rate of convergence in $\nu$ is never better
    than $O(\nu^{1/4})$
    unless the initial data is smooth enough, in which case it is never better
    than $O(\nu)$. In any case, only in exceptional circumstances would the rate
    not be determined by the integral coming from the boundary term.
\end{remark}

\begin{remark}\label{R:BoundaryConditionInRd}
Formally, the argument in the proof of \cref{T:BoundaryIffCondition} would give in any dimension the condition
\begin{align*} 
    \nu \int_0^T \int_\Gamma (\omega(u) \cdot \mathbf{n})
                \cdot \ol{u}
            \to 0
            \text{ as } \nu \to 0.
\end{align*}
In 3D, one has $\omega(u) \cdot \n = (1/2) \vec{\omega} \times \n$, so the condition could be written
\begin{align*} 
    \nu \int_0^T \int_\Gamma (\vec{\omega} \times \n)
                \cdot \ol{u}
        = \nu \int_0^T \int_\Gamma \vec{\omega} \cdot
                (\ol{u} \times \n)
            \to 0
            \text{ as } \nu \to 0,
\end{align*}
where $\vec{\omega}$ is the 3-vector form of the curl of $u$. We can only be assured, however, that $u(t) \in V$ for all $t > 0$, which is insufficient to define $\vec{\omega}$ on the boundary. (The normal component could be defined, though, since both $\vec{\omega}(t)$ and $\dv \vec{\omega}(t) = 0$ lie in $L^2$.) Even assuming more compatible initial data in 3D, such as $u_0 \in V$, we can only conclude that $u(t) \in H^2$ for a short time, with that time decreasing to $0$ as $\nu \to 0$ (in the presence of forcing; see, for instance, Theorem 9.9.4 of \cite{FoiasConstantin1988}).

\end{remark}

\Ignore{ 
\begin{remark}\label{R:BoundaryCondition2DRd}
Since $\ol{u} \times \n$ is a tangent vector, the second form of the condition in \refE{BoundaryCondition3D} shows that it is only the tangential components of $\vec{\omega}$ that matter in this condition. More specifically, only the tangential component perpendicular to $\ol{u}$ matters.

\end{remark}
} 

There is nothing deep about the condition in \refE{BoundaryCondition2D}, but what it says is that there are two mechanisms by which the vanishing viscosity limit can hold: Either the blowup of $\omega$ on the boundary happens slowly enough that
\begin{align}\label{e:nuL1Bound}
	\nu \int_0^T \norm{\omega}_{L^1(\Gamma)}
		\to 0
		\text{ as } \nu \to 0
\end{align}
or the vorticity for ($NS$) is generated on the boundary in such a way as to oppose the sign of $\ol{u} \cdot \BoldTau$. (This latter line of reasoning is followed in \cite{CKV2014}, leading to a new condition in a boundary layer slightly thicker than that of Kato.) In the second case, it could well be that vorticity for $(NS)$ blows up fast enough that \refE{nuL1Bound} does not hold, but cancellation in the integral in \refE{BoundaryCondition2D} allows that condition to hold.

\begin{lemma}\label{L:TimeDerivAndIntegration}
    Assume that $v \in L^\iny(0, T; V)$ with $\prt_t v \in L^2(0, T; V')$ as well as
    $\sqrt{t} \prt_t v \in L^2(0, T; H)$.
    Then $v \in C([0, T]; H)$,
    \begin{align*}
        \frac{1}{2} \diff{}{t} \norm{v}_{L^2}^2
            = (\prt_t v, v) \text{ in } \Cal{D}'((0, T))
        \text{ with } \sqrt{t} (\prt_t v, v) \in L^1(0, T),
    \end{align*}
    and
    \begin{align*}
        \int_0^T \diff{}{t} \norm{v(t)}_{L^2}^2 \, dt
            = \norm{v(T)}_{L^2}^2 - \norm{v(0)}_{L^2}^2.
    \end{align*}  
\end{lemma}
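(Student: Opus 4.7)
The plan is to recognize this as the classical Lions--Magenes integration-by-parts in time, augmented by the weighted $H$-regularity of $\partial_t v$ which lets us re-interpret the duality pairing as an honest $H$-inner product.

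First I would invoke the standard result (e.g.\ Temam \cite{T2001}, Theorem III.1.2, or Lions--Magenes): since $T<\infty$ we have $v\in L^\infty(0,T;V)\subset L^2(0,T;V)$, and combined with $\partial_t v\in L^2(0,T;V')$ this yields $v\in C([0,T];H)$ together with the distributional identity
\begin{align*}
    \tfrac{1}{2}\diff{}{t}\norm{v}_{L^2}^2
        = \innp{\prt_t v,\,v}_{V',V}
            \text{ in } \Cal{D}'((0,T)),
\end{align*}
with the right-hand side in $L^1(0,T)$ by Cauchy--Schwarz.

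Next I would upgrade the duality pairing to the $H$-inner product. The hypothesis $\sqrt{t}\,\prt_t v\in L^2(0,T;H)$ gives $\prt_t v(t)\in H$ for a.e.\ $t\in(0,T)$, and at such times the Gelfand-triple identification $H\hookrightarrow V'$ implies $\innp{\prt_t v,v}_{V',V}=(\prt_t v,v)$. This establishes the claimed form of the distributional identity. For the weighted $L^1$ assertion, I would estimate by Hölder
\begin{align*}
    \int_0^T \sqrt{t}\,\abs{(\prt_t v,v)}\,dt
        \le \norm{\sqrt{t}\,\prt_t v}_{L^2(0,T;H)}
            \norm{v}_{L^2(0,T;H)},
\end{align*}
which is finite because $v\in L^\infty(0,T;V)\subset L^\infty(0,T;H)$ and $T<\infty$.

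Finally, since $t\mapsto\norm{v(t)}_{L^2}^2$ is continuous on $[0,T]$ and its distributional derivative $2\innp{\prt_t v,v}_{V',V}$ lies in $L^1(0,T)$, the scalar function is absolutely continuous on $[0,T]$, so the classical fundamental theorem of calculus delivers
\begin{align*}
    \int_0^T \diff{}{t}\norm{v(t)}_{L^2}^2\,dt
        = \norm{v(T)}_{L^2}^2 - \norm{v(0)}_{L^2}^2.
\end{align*}
There is no serious obstacle here; the only step that requires a moment's care is verifying that the hypothesis $v\in L^\infty(0,T;V)$ (rather than $L^2(0,T;V)$) still feeds into the Lions--Magenes machine, which it trivially does on the bounded interval $[0,T]$, and that the duality/inner-product identification is legitimate a.e., which follows from the standard continuous embedding $H\hookrightarrow V'$.
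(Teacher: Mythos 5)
Your proof is correct, and it rests on the same foundation as the paper's: Temam's Lemma III.1.2 (cited in the paper as Lemma III.1.2 of \cite{T2001}) supplies $v \in C([0,T];H)$ and the distributional identity, and the weighted hypothesis $\sqrt{t}\,\prt_t v \in L^2(0,T;H)$ is used only to reinterpret the duality pairing as an honest $H$-inner product and to get $\sqrt{t}\,(\prt_t v, v) \in L^1(0,T)$. The one place you diverge is the final integration: the paper deduces $(d/dt)\norm{v}_{L^2}^2 \in L^1(T_0,T)$ from the weighted bound (the weight degenerates at $t=0$), applies the fundamental theorem of calculus on $[T_0,T]$, and then sends $T_0 \to 0$ using the continuity of $v$ in $H$; you instead observe that the unweighted pairing $\smallinnp{\prt_t v, v}_{V',V}$ is already in $L^1(0,T)$ by Cauchy--Schwarz in the $V'$--$V$ duality, so that $t \mapsto \norm{v(t)}_{L^2}^2$ is absolutely continuous on all of $[0,T]$ and the fundamental theorem applies directly, with no limiting argument. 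Both routes are valid; yours removes the $T_0 \to 0$ step at the cost of having to keep track of which integrability statement (weighted $H$-pairing versus unweighted duality pairing) is being used where, and it makes transparent that the final identity needs only the basic Lions--Magenes hypotheses, the weighted assumption serving solely to justify writing $(\prt_t v, v)$ as an $L^2$ inner product.
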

\begin{proof}
Having $v \in L^2(0, T; V)$ with $\prt_t v \in L^2(0, T; V')$ is enough to conclude that $(\prt_t v, v) = (1/2) (d/dt) \norm{v}_{L^2}^2$ in $\Cal{D}'((0, T))$ and $v \in C([0, T]; H)$ (see Lemma III.1.2 of \cite{T2001}).

Let $T_0 \in (0, T)$. Our stronger assumptions also give $(d/dt) \norm{v}_{L^2}^2 = 2(\prt_t v, v) \in L^1(T_0, T)$. Hence, by the fundamental theorem of calculus for Lebesgue integration (Theorem 3.35 of \cite{Folland1999}) it follows that
\begin{align*}
    \int_{T_0}^T \diff{}{t} \norm{v}_{L^2}^2 \, dt
        = \norm{v(T)}_{L^2}^2 - \norm{v(T_0)}_{L^2}^2.
\end{align*}
But $v$ is continuous in $H$ down to time zero, so taking $T_0$ to 0 completes the proof.
\end{proof}

%
%
\section{Examples where the 2D boundary condition holds}\label{S:Examples}

\noindent All examples where the vanishing viscosity limit is known to hold have some kind of symmetry---in geometry of the domain or the initial data---or have some degree of analyticity.

Since \refE{BoundaryCondition2D} is a necessary condition, it holds for all of these examples. But though it is also a sufficient condition, it is not always practicable to apply it to establish the limit. We give here examples in which it is practicable. This includes all known 2D examples having symmetry. In all explicit cases, the initial data is a stationary solution to the Euler equations.

\Example{1} Let $\ol{u}$ be any solution to the Euler equations for which $\ol{u} = 0$ on the boundary. The integral in \refE{BoundaryCondition2D} then vanishes for all $\nu$. From \refR{ROC}, the rate of convergence (here, and below, in $\nu$) is $C \nu^{1/4}$ or, for smoother initial data, $C \nu$.

\Example{1a} Example 1 is not explicit, since we immediately encounter the question of what (nonzero) examples of such steady solutions there are. As a first example, let $D$ be the disk of radius $R > 0$ centered at the origin and let $\omega_0 \in L^\iny(D)$ be radially symmetric. Then the associated velocity field, $u_0$, is given by the Biot-Savart law. By exploiting the radial symmetry, $u_0$ can be written,
\begin{align}\label{e:u0Circular}
    u_0(x)
        &= \frac{x^\perp}{\abs{x}^2}
            \int_0^{\abs{x}} \omega_0(r) r \, dr, \quad
\end{align}
where $B({\abs{x}})$ is the ball of radius $\abs{x}$ centered at the origin and where we abuse notation a bit in the writing of $\omega_0(r)$. Since $u_0$ is perpendicular to $\grad u_0$ it follows from the vorticity form of the Euler equations that $\ol{u} \equiv u_0$ is a stationary solution to the Euler equations.

Now assume that the total mass of vorticity,
\begin{align}\label{e:m}
    m := \int_{\R^2} \omega_0,
\end{align}
is zero. We see from \refE{u0Circular} that on $\Gamma$,
$u_0 = m x^\perp R^{-1} = 0$,
giving a steady solution to the Euler equations with velocity vanishing on the boundary.

(Note that $m = 0$ is equivalent to $u_0$ lying in the space $V$ of divergence-free vector fields vanishing on the boundary.)

\Example{1b} Let $\omega_0 \in L^1 \cap L^\iny(\R^2)$ be a compactly supported radially symmetric initial vorticity for which the total mass of vorticity vanishes; that is, $m = 0$. Then the expression for $u_0$ in \refE{u0Circular}, which continues to hold throughout all of $\R^2$, shows that $u_0$ vanishes outside of the support of its vorticity.

If we now restrict such a radially symmetric $\omega_0$ so that its support lies inside a domain (even allowing the support of $\omega_0$ to touch the boundary of the domain) then the velocity $u_0$ will vanish on the boundary. In particular, $u_0 \cdot \n = 0$ so, in fact, $u_0$ is a stationary solution to the Euler equations in the domain, being already one in the whole plane. In fact, one can use a superposition of such radially symmetric vorticities, as long as their supports do not overlap, and one will still have a stationary solution to the Euler equations whose velocity vanishes on the boundary.

Such a superposition is called a \textit{superposition of confined eddies} in \cite{FLZ1999A}, where their properties in the full plane, for lower regularity than we are considering, are analyzed. These superpositions provide a fairly wide variety of examples in which the vanishing viscosity limit holds. It might be interesting to investigate the precise manner in which the vorticity converges in the vanishing viscosity limit; that is, whether it is possible to do better than the ``vortex sheet''-convergence in condition $(E_2)$ of \cite{K2008VVV}.

In \cite{Maekawa2013}, Maekawa considers initial vorticity supported away from the boundary in a half-plane. We note that the analogous result in a disk, even were it shown to hold, would not cover this Example 1b when the support of the vorticity touches the boundary.

\Example{2 [2D shear flow]} Let $\phi$ solve the heat equation,
\begin{align}\label{e:HeatShear}
    \left\{
        \begin{array}{rl}
            \prt_t \phi(t, z) = \nu \prt_{zz} \phi(t, z)
                & \text{on } [0, \iny) \times [0, \iny), \\
            \phi(t, 0) = 0
                & \text{ for all } t > 0, \\
        \phi(0) = \phi_0. &
        \end{array}
    \right.  
\end{align}
Assume for simplicity that $\phi_0 \in W^{1, \iny}((0, \iny)$.
Let $u_0 = (\phi_0, 0)$ and $u(t, x) = (\phi(t, x_2), 0)$.

Let $\Omega = [-L, L] \times (0, \iny)$ be periodic in the $x_1$-direction. Then $u_0 \cdot \n = 0$ and $u(t) = 0$ for all $t > 0$ on $\prt \Omega$ and
\begin{align*}
    \prt_t u(t, x)
        &= \nu(\prt_{x_2 x_2} \phi(t, x_2), 0)
        = \nu \Delta u(t, x), \\
    (u \cdot \grad u)(t, x)
        &=
            \matrix{\prt_1 u^1 & \prt_1 u^2}
                   {\prt_2 u^1 & \prt_2 u^2}
            \matrix{u^1}{u^2}
        =
            \matrix{0 & 0}{\prt_2 \phi(t, x_2) & 0}
            \matrix{\phi(t, x_2)}{0} \\
        &=
            \matrix{0}{\prt_2 \phi(t, x_2) \phi(t, x_2)}
        =
            \frac{1}{2} \grad \phi(t, x_2).
\end{align*}
It follows that $u$ solves the Navier-Stokes equations on $\Omega$ with pressure, $p = - \frac{1}{2} \phi(t, x_2)$.

Similarly, letting $\ol{u} \equiv u_0$, we have $\prt_t \ol{u} = 0$, $\ol{u} \cdot \grad \ol{u} = \frac{1}{2} \grad \phi_0$ so $\ol{u} \equiv u_0$ is a stationary solution to the Euler equations.

Now, $\omega = \prt_1 u^2 - \prt_2 u^1 = - \prt_2 \phi(t, x_2)$ so
\begin{align*}
    \int_\Gamma \omega \, \ol{u} \cdot \BoldTau
        &= - \int_\Gamma \prt_2 \phi(t, x_2)|_{x_2 = 0}
                \phi_0(0)
        = - \phi_0(0)\int_{-L}^L
            \prt_{x_2} \phi(t, x_2)|_{x_2 = 0} \, d x_1 \\
        &= -L \phi_0(0) \prt_{x_2} \phi(t, x_2)|_{x_2 = 0}.
\end{align*}

The explicit solution to \refE{HeatShear} is
\begin{align*}
    \phi(t, z)
        &= \frac{1}{\sqrt{4 \pi \nu t}}
            \int_0^\iny \brac{e^{-\frac{(z - y)^2}{4 \nu t}}
                - e^{-\frac{(z + y)^2}{4 \nu t}}} \phi_0(y) \, dy
\end{align*}
(see, for instance, Section 3.1 of \cite{StraussPDE}). Thus,
\begingroup
\allowdisplaybreaks
\begin{align*}
    \prt_z \phi(t, z)|_{z = 0}
        &= -\frac{2}{4 \nu t \sqrt{4 \pi \nu t}}
            \int_0^\iny y \brac{e^{-\frac{y^2}{4 \nu t}}
                + e^{-\frac{y^2}{4 \nu t}}} \phi_0(y) \, dy \\
        &= -\frac{1}{\nu t \sqrt{4 \pi \nu t}}
            \int_0^\iny y e^{-\frac{y^2}{4 \nu t}} \phi_0(y) \, dy
                \\
        &= -\frac{1}{\nu t \sqrt{4 \pi \nu t}}
            \int_0^\iny (- 2 \nu t) \diff{}{y}
                e^{-\frac{y^2}{4 \nu t}} \phi_0(y) \, dy \\
        &= -\frac{1}{\sqrt{\pi \nu t}}
            \int_0^\iny \diff{}{y}
                e^{-\frac{y^2}{4 \nu t}} \, \phi_0(y) \, dy \\
        &= \frac{1}{\sqrt{\pi \nu t}}
            \int_0^\iny 
                e^{-\frac{y^2}{4 \nu t}} \phi_0'(y) \, dy
\end{align*}
\endgroup
so that
\begin{align*}
    \abs{\prt_{x_2} \phi(t, x_2)|_{x_2 = 0}}
        \le \frac{C}{\sqrt{\nu t}}.
\end{align*}
We conclude that
\begin{align*}
	\abs{\nu \int_0^T \int_\Gamma \omega \, \ol{u} \cdot \BoldTau}
	    \le C \sqrt{\nu} \int_0^T t^{-1/2} \, dt
	    = C \sqrt{\nu T}.
\end{align*}
The condition in \refE{BoundaryCondition2D} thus holds (as does \cref{e:nuL1Bound}). From \refR{ROC}, the rate of convergence is $C \nu^{\frac{1}{4}}$ (even for smoother initial data).

\Example{3} Consider Example 1a of radially symmetric vorticity in the unit disk, but without the assumption that $m$ given by \refE{m} vanishes. This example goes back at least to Matsui in \cite{Matsui1994}. The convergence also follows from the sufficiency of the Kato-like conditions established in \cite{TW1998}, as pointed out in \cite{W2001}. A more general convergence result in which the disk is allowed to impulsively rotate for all time appears in \cite{FLMT2008}. A simple argument to show that the vanishing viscosity limit holds is given in Theorem 6.1 \cite{K2006Disk}, though without a rate of convergence. Here we prove it with a rate of convergence by showing that the condition in \refE{BoundaryCondition2D} holds.

Because the nonlinear term disappears, the vorticity satisfies the heat equation, though with Dirichlet boundary conditions not on the vorticity but on the velocity:
\begin{align}\label{e:RadialHeat}
    \left\{
    \begin{array}{rl}
        \prt_t \omega = \nu \Delta \omega
            & \text{in } \Omega, \\
        u = 0
            & \text{on } \Gamma.
    \end{array}
    \right.
\end{align}
Unless $u_0 \in V$, however, $\omega \notin C([0, T]; L^2)$, so we cannot easily make sense of the initial condition this way.

An orthonormal basis of eigenfunctions satisfying these boundary conditions is
\begin{align*}
    u_k(r, \theta)
        &= \frac{J_1(j_{1k} r)}{\pi^{1/2}\abs{J_0(j_{1k})}}
            \wh{e}_\theta,
    \quad
    \omega_k(r, \theta)
        = \frac{j_{1k} J_0(j_{1k} r)}{\pi^{1/2}\abs{J_0(j_{1k})}},
\end{align*}
where $J_0$, $J_1$ are Bessel functions of the first kind and $j_{1k}$ is the $k$-th positive root of $J_1(x) = 0$.
(See \cite{K2006Disk} or \cite{LR2002}.) The $(u_k)$ are complete in $H$ and in $V$ and are normalized so that\footnote{This differs from the normalization in \cite{K2006Disk}, where $\norm{u_k}_H = j_{1k}^{-1}$, $\norm{\omega_k}_{L^2} = 1$.}
\begin{align*}
    \norm{u_k}_H = 1,
    \quad
    \norm{\omega_k}_{L^2} = j_{1k}.
\end{align*}

Assume that $u_0 \in H \cap H^1$. Then
\begin{align*}
    u_0 = \sum_{k = 1}^\iny a_k u_k,
    \quad
    \smallnorm{u_0}_H^2
        = \sum_{k = 1}^\iny  a_k^2
        < \iny.
\end{align*}
(But,
\begin{align*}
    \smallnorm{u_0}_V^2
        = \sum_{k = 1}^\iny  a_k^2 j_{1k}^2
        = \iny
\end{align*}
unless $u_0 \in V$.) We claim that
\begin{align*} 
    u(t)
        = \sum_{k = 1}^\iny a_k e^{- \nu j_{1k}^2 t} u_k
\end{align*}
provides a solution to the Navier-Stokes equations, ($NS$). To see this, first observe that $u \in C([0, T]; H)$, so $u(0) = u_0$ makes sense as an initial condition. Also, $u(t) \in V$ for all $t > 0$. Next observe that 
\begin{align*}
    \omega(t)
        := \omega(u(t))
        = \sum_{k = 1}^\iny a_k e^{- \nu j_{1k}^2 t} \omega_k
\end{align*}
for all $t > 0$, this sum converging in $H^n$ for all $n \ge 0$. Since each term satisfies \cref{e:RadialHeat} so does the sum. Taken together, this shows that $\omega$ satisfies \cref{e:RadialHeat} and thus $u$ solves ($NS$).

\Ignore{ 
\begin{align*}
    \sum_{k = 1}^\iny
            a_k^2 e^{- 2 \nu j_{1k}^2 t} \norm{\omega_k}_{L^2}^2
        =
        \sum_{k = 1}^\iny
            a_k^2 j_{1k} e^{- 2 \nu j_{1k}^2 t}
        < \iny
\end{align*}
for all $t > 0$
} 

The condition in \refE{BoundaryCondition2D} becomes
\begin{align*}
	\nu \int_0^T & \int_\Gamma \omega \, \ol{u} \cdot \BoldTau
	    = \nu \sum_{k = 1}^\iny \int_0^T \int_\Gamma
            a_k e^{- \nu j_{1k}^2 t} \omega_k
	        \, \ol{u} \cdot \BoldTau \, dt \\
	    &= \nu \sum_{k = 1}^\iny \int_0^T
            a_k e^{- \nu j_{1k}^2 t} \omega_k|_{r = 1}
	        \int_\Gamma \ol{u} \cdot \BoldTau \, dt\\
	    &= m \nu \sum_{k = 1}^\iny a_k
	            \frac{j_{1k} J_0(j_{1k})}
	            {\pi^{1/2}\abs{J_0(j_{1k})}}
	            \int_0^T
	            e^{- \nu j_{1k}^2 t} \, dt.
\end{align*}
In the final equality, we used
\begin{align*}
    \int_\Gamma \ol{u} \cdot \BoldTau
        = - \int_\Gamma \ol{u}^\perp \cdot \n
        = - \int_\Omega \dv \ol{u}^\perp
        = \int_\Omega \ol{\omega}
        = m.
\end{align*}
(Because vorticity is transported by the Eulerian flow, $m$ is constant in time.)

Then,
\begingroup
\allowdisplaybreaks
\begin{align*}
    &\abs{\nu \int_0^T \int_\Gamma \omega \, \ol{u} \cdot \BoldTau}
        \le \abs{m} \nu \sum_{k = 1}^\iny
            \frac{\abs{a_k}}{\pi^{1/2}} j_{1k}
	        \int_0^T
	            e^{- \nu j_{1k}^2 t} \, dt \\
	     &\qquad
	     = \abs{m} \nu \sum_{k = 1}^\iny
            \frac{\abs{a_k}}{\pi^{1/2}} j_{1k}
	        \frac{1 - e^{- \nu j_{1k}^2 T}}{\nu j_{1k}^2} \\
	     &\qquad
	     \le \frac{\abs{m}}{\pi^{\frac{1}{2}}}
	         \pr{\sum_{k = 1}^\iny a_k^2}^{\frac{1}{2}}
	        \pr{\sum_{k = 1}^\iny 
	            \frac{(1 - e^{- \nu j_{1k}^2 T})^2}{j_{1k}^2}}
	        ^{\frac{1}{2}} \\
	     &\qquad
	     = \frac{\abs{m}}{\pi^{\frac{1}{2}}}
	         \smallnorm{u_0}_H
	        \pr{\sum_{k = 1}^\iny 
	        \frac{(1 - e^{- \nu j_{1k}^2 T})^2}{j_{1k}^2}}
	        ^{\frac{1}{2}}.
\end{align*}
\endgroup
Classical bounds on the zeros of Bessel functions give $1 + k < j_{1k} \le \pi(\frac{1}{2} + k)$ (see, for instance, Lemma A.3 of \cite{K2006Disk}). Hence, with $M = (\nu T)^{-\al}$, $\al > 0$ to be determined, we have
\begingroup
\allowdisplaybreaks
\begin{align*}
    \sum_{k = 1}^\iny 
	        &\frac{(1 - e^{- \nu j_{1k}^2 T})^2}{j_{1k}^2}
        \le C  \sum_{k = 1}^\iny 
	        \frac{(1 - e^{- \nu k^2 T})^2}{k^2} \\
	    &\le (1 - e^{- \nu T})^2
	        + \int_{k = 1}^M 
	            \frac{(1 - e^{- \nu x^2 T})^2}{x^2} \, dx
	        + \int_{k = M + 1}^\iny 
	            \frac{(1 - e^{- \nu x^2 T})^2}{x^2} \, dx \\
	    &\le \nu^2 T^2
	        + \nu^2 T^2 \int_{k = 1}^M 
	            \frac{x^4}{x^2} \, dx
	        + \int_{k = M + 1}^\iny 
	            \frac{1}{x^2} \, dx \\
	    &\le \nu^2 T^2
	        + \nu^2 T^2 \frac{1}{3} \pr{M^3 - 1}
	        + \frac{1}{M} 
	    \le \nu^2 T^2
	        + \nu^2 T^2 M^3
	        + \frac{1}{M}  \\
	    &= \nu^2 T^2
	        + \nu^2 T^2 \nu^{-3 \al} T^{- 3 \al}
	        + (\nu T)^\al
	    =  \nu^2 T^2
	        + (\nu T)^{2 - 3 \al}
	        + (\nu T)^\al      
\end{align*}
\endgroup
as long as $\nu M^2 T \le 1$ (used in the third inequality); that is, as long as
\begin{align}\label{e:albetaReq}
    (\nu T)^{1 - 2 \al} \le 1.
\end{align}
Thus \cref{e:BoundaryCondition2D} holds (as does \cref{e:nuL1Bound}), so ($VV$) holds.

The rate of convergence in ($VV$) is optimized when $(\nu T)^{2 - 3 \al} = (\nu T)^\al$, which occurs when $\al = \frac{1}{2}$. The condition in \refE{albetaReq} is then satisfied with equality. \refR{ROC} then gives a rate of convergence in the vanishing viscosity limit of $C \nu^{\frac{1}{4}}$ (even for smoother initial data), except in the special case $m = 0$, which we note reduces to Example 1a.

\ReturnExample{1a} Let us apply our analysis of Example 3 to the special case of Example 1a, in which $u_0 \in V$. Now, on the boundary, 
\begin{align*}
    (\prt_t u + u \cdot \grad u + \grad p) \cdot \BoldTau
        = \nu \Delta u \cdot \BoldTau
        = \nu \Delta u^\perp \cdot (- \n)
        = - \nu \grad^\perp \omega \cdot \n.
\end{align*}
But $\grad p \equiv 0$ so the left-hand side vanishes. Hence, the vorticity satisfies homogeneous Neumann boundary conditions for positive time. (This is an instance of Lighthill's formula.) Since the nonlinear term vanishes, in fact, $\omega$ satisfies the heat equation, $\prt_t \omega = \nu \Delta \omega$ with homogeneous Neumann boundary conditions and hence $\omega \in C([0, T]; L^2(\Omega))$. 

Moreover, multiplying $\prt_t \omega = \nu \Delta \omega$ by $\omega$ and integrating gives
\begin{align*}
    \norm{\omega(t)}_{L^2}^2
        + 2 \nu \int_0^t \norm{\grad \omega(s)}_{L^2}^2 \, ds
            = \norm{\grad \omega_0}_{L^2}^2.
\end{align*}
We conclude that the $L^2$-norm of $\omega$, and so the $L^p$-norms for all $p \le 2$, are bounded in time uniformly in $\nu$. (In fact, this holds for all $p \in [1, \iny]$. This conclusion is not incompatible with \refT{VorticityNotBounded}, since $\ol{u} \equiv 0$ on $\Gamma$.)

This argument for bounding the $L^p$-norms of the vorticity fails for Example 3 because the vorticity is no longer continuous in $L^2$ down to time zero unless $u_0 \in V$. It is shown in \cite{FLMT2008} (and see \cite{GKLMN14}) that such control is nonetheless obtained for the $L^1$ norm.

\section{On a result of Bardos and Titi}\label{S:BardosTiti}

\noindent
Bardos and Titi in \cite{BardosTiti2013a, Bardos2014Private}, also starting from, essentially, \refE{VVArg}
make the observation that, in fact, for the vanishing viscosity limit to hold, it is necessary and sufficient that $\nu \omega$
(or, equivalently, $\nu [\prt_{\n} u]_{\BoldTau}$)
converge to zero on the boundary in a weak sense. In their result, the boundary is assumed to be $C^\iny$, but the initial velocity is assumed to only lie in $H$. Hence, the sufficiency condition does not follow immediately from \refE{VVArg}.

Their proof of sufficiency involves the use of dissipative solutions to the Euler equations. (The use of dissipative solutions for the Euler equations in a domain with boundaries was initiated in \cite{BardosGolsePaillard2012}. See also \cite{BSW2014}.) We present here the weaker version of their results in 2D that can be obtained without employing dissipative solutions. The simple and elegant proof of necessity is as in \cite{Bardos2014Private}, simplified further because of the higher regularity of our initial data. 

\begin{theorem}[Bardos and Titi \cite{BardosTiti2013a, Bardos2014Private}]\label{T:BardosTiti}
Working in 2D, assume that $\prt \Omega$ is $C^2$ and that $\ol{u} \in C^1([0, T; C^1(\Omega))$. Then for $u \to \ol{u}$ in $L^\iny(0, T; H)$ to hold it is necessary and sufficient that
\begin{align}\label{e:BardosNecCond}
    \nu \int_0^T \int_\Gamma \omega \, \varphi \to 0
        \text{ as } \nu \to 0
        \text{ for any } \varphi \in C^1([0, T] \times \Gamma).
\end{align}
\end{theorem}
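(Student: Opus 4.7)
The plan is to derive sufficiency at once from \cref{T:BoundaryIffCondition} and then prove necessity by constructing a suitable divergence-free test vector field extending $\varphi \BoldTau$ into $\Omega$ and running an energy-type identity.

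For sufficiency, observe that since $\ol{u} \in C^1([0,T]; C^1(\Omega))$ and $\Gamma$ is $C^2$, the trace $\ol{u}\cdot\BoldTau$ lies in $C^1([0,T]\times \Gamma)$. Taking $\varphi := \ol{u}\cdot\BoldTau$ in \cref{e:BardosNecCond} reduces exactly to \cref{e:BoundaryCondition2D}, so ($VV$) follows from \cref{T:BoundaryIffCondition}.

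For necessity, I would first construct, given $\varphi \in C^1([0,T]\times\Gamma)$, a stream function $\Psi \in C^1([0,T]; C^2(\ol{\Omega}))$ satisfying $\Psi|_\Gamma = 0$ and $\prt_\n \Psi|_\Gamma = \varphi$. This is standard: use tubular normal coordinates near $\Gamma$ (valid because $\Gamma$ is $C^2$) to define $\Psi(x) = \varphi(\pi(x)) d(x,\Gamma)$ on a neighborhood of $\Gamma$ and cut off away from $\Gamma$. Setting $V := \grad^\perp \Psi$, one gets a divergence-free field with $V\cdot\n = 0$ and $V\cdot \BoldTau = \prt_\n\Psi = \varphi$ on $\Gamma$. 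By the 2D regularity in \cref{e:RegTwoD}, for every $t>0$ the equation ($NS$) holds in $L^2$, so pairing with $V(t)$ and integrating by parts (the pressure term drops since $\dv V = 0$ and $V\cdot\n|_\Gamma = 0$) yields
\begin{align*}
    \nu \int_\Gamma (\grad u \cdot \n)\cdot V
        = (\prt_t u, V) + (u\cdot \grad u, V) + \nu(\grad u, \grad V) - (f, V).
\end{align*}
Since $u = 0$ on $\Gamma$, the identity \cref{e:gradunomega} applied with $\ol{u}$ replaced by $V$ gives $(\grad u\cdot\n)\cdot V = \omega\,(V\cdot \BoldTau) = \omega\,\varphi$ on $\Gamma$.

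Integrating over $[0,T]$ and handling the time-derivative by the same time-cutoff/\cref{L:TimeDerivAndIntegration} technique used in the proof of \cref{T:BoundaryIffCondition} produces
\begin{align*}
    \nu \int_0^T\!\!\int_\Gamma \omega\,\varphi
        &= (u(T),V(T)) - (u_0,V(0)) - \int_0^T (u,\prt_t V) \\
        &\qquad - \int_0^T (u\otimes u, \grad V)
            + \nu\int_0^T (\grad u,\grad V) - \int_0^T (f,V),
\end{align*}
where I rewrote $(u\cdot\grad u, V) = -(u\otimes u, \grad V)$ using $\dv u = 0$ and $u|_\Gamma = 0$. Now I would pass to $\nu \to 0$: the $\nu(\grad u,\grad V)$ term is $O(\sqrt\nu)$ by Cauchy--Schwarz and the basic energy inequality $\nu\int_0^T \|\grad u\|_{L^2}^2 \le C$, and every other term on the right converges to its $\ol{u}$-analogue using $u \to \ol{u}$ in $L^\infty(0,T;L^2)$ (and $\|u\|_{L^\infty L^2}$ bounded for the quadratic term). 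The resulting limit is exactly the weak formulation of ($EE$) tested against $V$, which vanishes since $\ol{u}$ is a strong solution to ($EE$). Hence $\nu\int_0^T\int_\Gamma \omega\,\varphi \to 0$.

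The main obstacle is the regularity bookkeeping near $t=0$: the bound $\sqrt{t}\,\prt_t u \in L^2(0,T;H)$ does not by itself legitimize $\int_0^T(\prt_t u,V)\,dt = (u(T),V(T))-(u_0,V(0))-\int_0^T(u,\prt_t V)\,dt$. I would handle this exactly as in \cref{T:BoundaryIffCondition} (or via \cref{L:TimeDerivAndIntegration}), namely by inserting a smooth temporal cutoff $h_n$ vanishing at $0$, carrying through the identity for $h_n V$ where the endpoint issue disappears, then sending $n \to \infty$ using weak continuity of $u(t)$ in $H$ at $t=0$. Everything else is a routine passage to the limit whose ingredients are already present in the paper.
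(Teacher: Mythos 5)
Your sufficiency argument is exactly the paper's (set $\varphi = \ol{u}\cdot\BoldTau$ in \cref{T:BoundaryIffCondition}), but your necessity argument takes a genuinely different route, and it works. The paper extends $\varphi\,\BoldTau$ into $\Omega$ by a Kato-type corrector $v_\delta = \grad^\perp\bigl(\zeta_\delta(\dist(\cdot,\Gamma))\,\psi\bigr)$ supported in a layer of width $\delta = c\nu$, so that after pairing ($NS$) with $v_\delta$ \emph{each} term on the left of \cref{e:BardosNec} vanishes on its own: the time-derivative term because $\smallnorm{v_\delta}_{L^\iny L^2}$ and $\smallnorm{\prt_t v_\delta}_{L^1 L^2}$ are $O(\nu^{1/2})$, the viscous term because $\smallnorm{\grad v_\delta}_{L^\iny L^2} = O(\nu^{-1/2})$ pairs with Kato's theorem that ($VV$) forces $\nu\int_0^T\smallnorm{\grad u}_{L^2}^2 \to 0$, and the nonlinear term by Kato's thin-layer (Hardy-type) estimates. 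You instead use a $\nu$-independent extension $V = \grad^\perp\Psi$ and do not try to kill the terms individually: you identify the limit of the whole left-hand side as the weak formulation of ($EE$) tested against $V$, which vanishes because $\ol{u}$ is a classical solution, the only estimates needed being $\nu\abs{(\grad u, \grad V)} = O(\nu^{1/2})$ from the energy inequality and $\smallnorm{u\otimes u - \ol{u}\otimes\ol{u}}_{L^1} \le C\smallnorm{u - \ol{u}}_{L^2}$ for the quadratic term. Your version is more elementary (no boundary-layer machinery at all), at the price of using the full strength of the hypothesis that the limit is the classical Euler solution $\ol{u}$; the paper's thin-layer version is the one that generalizes to the actual Bardos--Titi setting of $u_0 \in H$ and dissipative solutions, where there is no $\ol{u}$ to test against. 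Two minor points, neither fatal: with $\Psi = \varphi(\pi(x))\,d(x,\Gamma)$ one gets $V\cdot\BoldTau = -\varphi$ rather than $+\varphi$ on $\Gamma$ (immaterial, since the test class is closed under negation); and your requirement that $\grad V$ be bounded implicitly uses two tangential derivatives of $\varphi$, though only $\varphi \in C^1$ is assumed --- but the paper's bound $\smallnorm{\grad v_\delta}_{L^\iny L^2} \le C\delta^{-1/2}$ has exactly the same implicit cost, so this is a caveat you inherit from the paper rather than introduce.
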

\begin{proof}
Sufficiency of the condition follows immediately from setting $\varphi = (\ol{u} \cdot \BoldTau)|_\Gamma$ in \refT{BoundaryIffCondition}. 

To prove necessity, let $\varphi \in C^1([0, T] \times \Gamma)$. We will need a divergence-free vector field $v_\delta \in C^1([0, T]; H \cap C^\iny(\Omega))$ such that $v_\delta \cdot \BoldTau = \varphi$. Moreover, we require of $v_\delta$ that it satisfy the same bounds as the boundary layer corrector of Kato in \cite{Kato1983}; in particular,
\begin{align}\label{e:vBounds}
        \norm{\prt_t v_\delta}_{L^1([0, T]; L^2(\Omega))}
            \le C \delta^{1/2}, \qquad
        \norm{\grad v_\delta}_{L^\iny([0, T]; L^2(\Omega))}
            \le C \delta^{-1/2}.
\end{align}
This vector field can be constructed in several ways: we detail one such construction at the end of this proof.


The proof now proceeds very simply. We multiply the Navier-Stokes equations by $v_\delta$ and integrate over space and time to obtain
\begin{align}\label{e:BardosNec}
    \begin{split}
    \int_0^T (\prt_t &u, v_\delta)
        + \int_0^T (u \cdot \grad u, v_\delta)
        + \nu \int_0^T (\grad u, \grad v_\delta) \\
        &= \nu \int_0^T \int_\Gamma (\grad u \cdot \n)
            \cdot v_\delta
        = \nu \int_0^T \int_\Gamma \omega \, v_\delta
            \cdot \BoldTau
        = \nu \int_0^T \int_\Gamma \omega \, \varphi.
    \end{split}
\end{align}
Here, we used \refE{gradunomega} with $v_\delta$ in place of $\ol{u}$, and we note that no integrations by parts were involved.

Now, assuming that the vanishing viscosity limit holds, Kato shows in \cite{Kato1983} that setting $\delta = c \nu$---and using the bounds in \refE{vBounds}---each of the terms on the left hand side of \refE{BardosNec} vanishes as $\nu \to 0$. By necessity, then, so does the right hand side, giving the necessity of the condition in \refE{BardosNecCond}.

It remains to construct $v_\delta$.
To do so, we place coordinates on a tubular neighborhood, $\Sigma$, of $\Gamma$ as in the proof of \cref{L:Trace}. In $\Sigma$, define
\begin{align*}
    \psi(s, r) = - r \varphi(s).
\end{align*}
Write $\wh{r}$, $\wh{s}$ for the unit vectors in the directions of increasing $r$ and $s$. Then $\wh{r} \cdot \wh{s} = 0$ and $\wh{r} = - \n$ on $\Gamma$. Thus, on the boundary,
\begin{align*}
    \grad \psi(s, r)
        = -\varphi(s) \wh{r} -r \varphi'(s) \wh{s}.
\end{align*}
This gives
\begin{align*}
    \grad \psi(s) \cdot \n
        = -\varphi(s) \wh{r} \cdot \n
        = \varphi(s).
\end{align*}
It also gives $\grad \psi \in C^1([0, T]; C(\Sigma))$ so that $\psi \in \varphi \in C^1([0, T] \times \Sigma)$.

We now follow the procedure in \cite{Kato1983}. Let $\zeta: [0, \iny) \to [0, 1]$ be a smooth cutoff function with $\zeta \equiv 1$ on $[0, 1/2]$ and $\zeta \equiv 0$ on $[1, \iny]$. Define $\zeta_\delta(\cdot) = \zeta(\cdot/\delta)$ and
\begin{align*}
    v_\delta(x)
        = \grad^\perp (\zeta_\delta(\dist(x, \prt \Omega)) \psi(x)).
\end{align*}
Note that $v_\delta$ is supported in a boundary layer of width proportional to $\delta$. The bounds in \refE{vBounds} follow as shown in \cite{K2006Kato}.
\end{proof}

To establish the necessity of the stronger condition in \refT{BardosTiti}, we used (based on Bardos's \cite{Bardos2014Private}) a vector field supported in a boundary layer of width $c \nu$, as in \cite{Kato1983}. We used it, however, to extend to the whole domain an arbitrary cutoff function defined on the boundary, rather than to correct the Eulerian velocity as in \cite{Kato1983}.

\begin{remark}
    In this proof of \refT{BardosTiti}
    the time regularity in the test functions could be weakened
    slightly to assuming that
    $\prt_t \varphi \in L^1([0, T]; C(\Gamma))$,
    for this would still allow the first bound in
    \refE{vBounds} to be obtained.
\end{remark}
\begin{remark}
    Using the results of \cite{BardosTiti2013a, BSW2014} it is
    possible to change the condition in \refE{BardosNecCond} to
    apply to test functions $\varphi$ in
    $C^1([0, T]; C^\iny(\Gamma))$ (\cite{Bardos2014Private}).
    Moreover, this can be done
    without assuming time or spatial regularity of the
    solution to the Euler equations: only that the initial
    velocity lies in $H$.
\end{remark}

\Ignore{ 
%
%
\section{Speculation on another condition for the VV limit}

\noindent There is nothing deep about the condition in \refE{BoundaryCondition2D}, but what it says is that there are two mechanisms by which the vanishing viscosity limit can hold. First, the blowup of $\omega$ on the boundary can happen slowly enough that
\begin{align}\label{e:nuL1Bound}
	\nu \int_0^T \norm{\omega}_{L^1(\Gamma)}
		\to 0
		\text{ as } \nu \to 0
\end{align}
or, second, the vorticity for ($NS$) can be generated on the boundary in such a way as to oppose the sign of $\ol{u} \cdot \BoldTau$. In the second case, it could well be that vorticity for $(NS)$ blows up fast enough that \refE{nuL1Bound} does not hold, but cancellation in the integral in \refE{BoundaryCondition2D} allows that condition to hold.

A natural question to ask is whether the condition,
\begin{align*}
	(G) \qquad
	\nu \int_0^T \norm{\omega}_{L^1(\Gamma)}
		\to 0
		\text{ as } \nu \to 0
\end{align*}
is equivalent to the conditions in \cref{T:VVEquiv}. The sufficiency of this condition follows immediately, since it implies that \refE{BoundaryCondition2D} holds.


To see why we might suspect that ($G$) is necessary for ($VV$) to hold, we start with the necessary and sufficient condition $(iii')$ of Theorem 1.2 of \cite{K2006Kato} that
\begin{align*}
	\nu \int_0^T \norm{\omega}_{L^2(\Gamma_\nu)}^2
		\to 0
		\text{ as } \nu \to 0,
\end{align*}
where $\Gamma_\nu = \set{x \in \Omega \colon \dist(x, \Gamma) < \nu}$. For sufficiently regular $u_\nu^0$, for all $t > 0$, $\omega(t)$ will lie in $H^2(\Omega) \supseteq C(\ol{\Omega})$, and one might expect to have
\begin{align}\label{e:ApproxIntegral}
	\nu \int_0^T \norm{\omega}_{L^2(\Gamma_\nu)}^2
		&\cong \nu \int_0^T \int_0^\nu \norm{\omega}_{L^2(\Gamma)}^2
		= \nu^2 \int_0^T \norm{\omega}_{L^2(\Gamma)}^2.
\end{align}
Then using \Holders inequality followed by Jensen's inequality,
\begin{align}\label{e:HJBound}
	\pr{\frac{\nu}{T^{3/2}} \int_0^T \norm{\omega}_{L^1(\Gamma)}}^2
		\le \pr{\frac{\nu}{T} \int_0^T \norm{\omega}_{L^2(\Gamma)}}^2
		\le \frac{\nu^2}{T} \int_0^T \norm{\omega}_{L^2(\Gamma)}^2.
\end{align}
But the left-hand side of \refE{ApproxIntegral} must vanish, and so too must the left-hand side of \refE{HJBound}, implying that $(G$) holds.

The problem with this argument, however, is that the best we can say rigorously is that from \refT{BoundaryLayerWidth} and the continuity of $\omega(t)$ for all $t > 0$,
\begin{align*}
	\nu \int_0^T \norm{\omega}_{L^1(\Gamma)}^2
		&= \nu  \int_0^T
			\lim_{\delta \to 0} \frac{1}{\delta^2} \norm{\omega}_{L^1(\Gamma_{\delta})}^2
		\le \nu \liminf_{\delta \to 0} \frac{1}{\delta^2}
			\int_0^T \norm{\omega}_{L^1(\Gamma_{\delta})}^2 \\
		&\le \nu \lim_{\delta \to 0} \frac{1}{\delta^2} \frac{C \delta}{\nu}
		\le \iny,
\end{align*}
where in the first inequality we used Fatou's lemma.

If we could improve this inequality to show that $\nu \int_0^T \norm{\omega}_{L^1(\Gamma)}^2$ is $o(1/\nu)$, then using \Holders inequality followed by Jensen's inequality,
\begin{align*}
	\pr{\frac{\nu}{T} \int_0^T \norm{\omega}_{L^1(\Gamma)}}^2
		\le \frac{\nu^2}{T} \int_0^T \norm{\omega}_{L^1(\Gamma)}^2
		\to 0 \text{ as } \nu \to 0.
\end{align*}

\Ignore{
Letting $f = \ol{\omega}$ in condition ($E_2$) of \cref{T:VVEquiv} gives
\begin{align*}
	(\omega, \ol{\omega})
		\to \norm{\ol{\omega}}_{L^2}^2 - \int_\Gamma \ol{\omega} \, \ol{u} \cdot \BoldTau.
\end{align*}
But,
\begin{align*}
	(\omega, \ol{\omega})
		&= (\grad u, \grad \ol{u})
		= - (\Delta u, \ol{u})
			+ \int_\Gamma (\grad u \cdot \mathbf{n}) \cdot \ol{u},
\end{align*}
where we used Lemma 6.6 of \cite{K2008VVV} for scalar vorticity (in which the factor of 2 in that lemma does not appear).
By Equation (4.2) of \cite{KNavier},
\begin{align*}
	(\grad u \cdot \mathbf{n}) \cdot \ol{u}
		= ((\grad u \cdot \mathbf{n}) \cdot \BoldTau) (\ol{u} \cdot \BoldTau)
		= \omega(u) \, \ol{u} \cdot \BoldTau.
\end{align*}
Thus,
\begin{align*}
	- \nu (\Delta u, \ol{u})
		+ \nu \int_\Gamma  \omega(u) \, \ol{u} \cdot \BoldTau
		\to \nu \norm{\ol{\omega}}_{L^2}^2 - \nu \int_\Gamma \ol{\omega} \, \ol{u} \cdot \BoldTau.
\end{align*}
The right-hand side vanishes with $\nu$ since $\ol{u}$ is in $C^{1 + \eps}$, so
\begin{align*}
	\nu \int_\Gamma  \omega(u) \, \ol{u} \cdot \BoldTau
		\to - \nu (\Delta u, \ol{u}).
\end{align*}
It remains to show that the right-hand side vanishes with $\nu$.

Now,
\begin{align*}
	\nu (\Delta u, \ol{u})
		= (\nu \Delta u, \ol{u})
		= \nu (\prt_t u, \ol{u})
			+ \nu (u \cdot \grad u, \ol{u})
			+ \nu (\grad p, \ol{u})
\end{align*}
} 

%
%
\Ignore{
We make the assumptions on the initial velocity and on the forcing in \cref{T:VVEquiv}.

\begin{theorem}
The vanishing viscosity limit holds over any finite time interval $[0, T]$ if and only if $A_\nu \to 0$ as $\nu \to 0$, where
\begin{align}\label{e:Anu}
	A_\nu = \nu \int_0^T \norm{\omega}_{L^1(\Gamma)}.
\end{align}
Moreover,
\begin{align}\label{e:RateOfConvergence}
	\norm{u(t) - \ol{u}(t)}_{L^2}^2
		\le (C\nu + C A_\nu + \smallnorm{u_\nu^0 - \ol{u}^0}_{L^2}^2)^{1/2} e^{Ct}
\end{align}
for all sufficiently small $\nu > 0$, with $C$ depending only upon the initial velocities and $T$.
\end{theorem}
\begin{proof}
Subtracting ($EE$) from ($NS$), multiplying by $w = u - \ol{u}$, and integrating over $\Omega$ leads to
\begin{align*}
	\frac{1}{2} \diff{}{t} &\norm{\omega}_{L^2}^2
		+ \nu \norm{\grad u}_{L^2}^2
		= - (w \cdot \grad \ol{u}, w)
			+ \nu(\grad u, \grad \ol{u})
				- \nu \int_\Gamma (\grad u \cdot \mathbf{n}) \cdot \ol{u} \\
		&= - (w \cdot \grad \ol{u}, w)
			+ \nu(\grad u, \grad \ol{u})
				- \nu \int_\Gamma \omega \, \ol{u} \cdot \BoldTau.
\end{align*}
Here we used Equation (4.2) of \cite{KNavier} to conclude that
\begin{align*}
	(\grad u \cdot \mathbf{n}) \cdot \ol{u}
		= ((\grad u \cdot \mathbf{n}) \cdot \BoldTau) (\ol{u} \cdot \BoldTau)
		= \omega \, \ol{u} \cdot \BoldTau.
\end{align*}
Integrating over time gives
\begin{align*}
	\frac{1}{2} &\norm{w(t)}_{L^2}^2
		+ \nu \int_0^t \norm{\grad u}_{L^2}^2
		= \norm{w(0)}_{L^2}^2 
		- \int_0^t (w \cdot \grad \ol{u}, w)
			+ \nu \int_0^t (\grad u, \grad \ol{u}) \\
		&\qquad - \nu \int_0^t \int_\Gamma \omega \, \ol{u} \cdot \BoldTau.
\end{align*}

Using the bounds,
\begin{align*}
	\abs{(w \cdot \grad \ol{u}, w)}
		&\le \norm{\grad \ol{u}}_{L^\iny([0, T] \times \Omega)}
			\norm{w}_{L^2}^2
			\le C \norm{w}_{L^2}^2, \\
		\nu \int_0^T \abs{(\grad u, \grad \ol{u})}
			&\le \nu \norm{\grad \ol{u}}_{L^2([0, T] \times \Omega)}
				\norm{\grad u}_{L^2([0, T] \times \Omega)} \\
			&\le C \nu
				\norm{\grad u}_{L^2([0, T] \times \Omega)} \\
			&\le C \nu + \frac{\nu}{2} \norm{\grad u}_{L^2([0, T] \times \Omega)}^2 , \\
		- \nu \int_0^t \int_\Gamma \omega \, \ol{u} \cdot \BoldTau
			&\le
			\nu \norm{\ol{u}}_{L^\iny} \int_0^T \norm{\omega}_{L^1(\Gamma)}
			\le C \nu \int_0^T \norm{\omega}_{L^1(\Gamma)}
\end{align*}
gives
\begin{align}\label{e:VVArg}
	\begin{split}
		&\norm{w(t)}_{L^2}^2
		+ \nu \int_0^t \norm{\grad u}_{L^2}^2
		\le \norm{w(0)}_{L^2}^2
		+ C \nu + C A_\nu + C \int_0^t \norm{w}_{L^2}^2.
	\end{split}
\end{align}

Applying Gronwall's inequality leads to \refE{RateOfConvergence} and shows that $A_\nu \to 0$ implies ($VV$). 

\end{proof}

Let
\begin{align*}
	\Gamma_\delta = \set{x \in \Omega \colon \dist(x, \Gamma) < \delta},
\end{align*}
where we always assume that $\delta > 0$ is sufficiently small that $\Gamma_\delta$ is a tubular neighborhood of $\Gamma$.

\begin{lemma}\label{L:BoundaryLayerWidth}
For any sufficiently small $\delta > 0$
\begin{align}\label{e:OmegaL1VanishGeneral}
	\norm{\omega}_{L^2([0, T]; L^1(\Gamma_{\delta}))}^2
		\le C \frac{\delta}{\nu}
\end{align}
for all sufficiently small $\delta(\nu)$.
\end{lemma}
\begin{proof}
By the Cauchy-Schwarz inequality,
\begin{align*}
	\norm{\omega}_{L^1(\Gamma_{\delta})}
		\le \norm{1}_{L^2(\Gamma_{\delta})} \norm{\omega}_{L^2(\Gamma_{\delta})}
			\le C \delta^{1/2} \norm{\omega}_{L^2(\Gamma_{\delta})}
\end{align*}
so
\begin{align*}
	\norm{\omega}_{L^1(\Gamma_{\delta})}^2
		\le C \delta \norm{\omega}_{L^2(\Gamma_{\delta})}^2
\end{align*}
and
\begin{align*}
	\frac{C \nu}{\delta} \norm{\omega}_{L^2([0, T]; L^1(\Gamma_{\delta}))}^2
		\le \nu \norm{\omega}_{L^2([0, T]; L^2(\Gamma_{\delta}))}^2.
\end{align*}
By the basic energy inequality for the Navier-Stokes equations, the right-hand side is bounded, giving \refE{OmegaL1VanishGeneral}.
\end{proof}

\begin{theorem}\label{T:Anu}
	Assume that $\Gamma$ is $C^3$.
	If the vanishing viscosity limit holds then $A_\nu \to 0$ as $\nu \to 0$.
\end{theorem}
\begin{proof}
Impose at first the extra regularity condition that $u_\nu^0$ lies in $H^3(\Omega)$, so that the $u(t)$ lies in $H^3(\Omega)$ for all $t > 0$. Then for all $t > 0$, $\omega(t)$ is in $H^2(\Omega)$ and hence $\omega(t)$ is continuous up to the boundary by Sobolev embedding. Thus,\begin{align}\label{e:BoundaryIntegralLimit}
	\norm{\omega(t)}_{L^1(\Gamma)}^2
		= \lim_{\delta \to 0} \frac{1}{\delta^2}
			\norm{\omega(t)}_{L^1(\Gamma_{\delta})}^2.
\end{align}
It follows from Fatou's lemma that
\begin{align*}
	\nu \int_0^T &\norm{\omega(t)}_{L^1(\Gamma)}^2 \, dt
		= \nu \int_0^T \lim_{\delta \to 0} \frac{1}{\delta^2}
			\norm{\omega(t)}_{L^1(\Gamma_{\delta})}^2 \, dt\\
		&= \nu \int_0^T \liminf_{\delta \to 0} \frac{1}{\delta^2}
			\norm{\omega(t)}_{L^1(\Gamma_{\delta})}^2 \, dt
		\le  \nu \liminf_{\delta \to 0} \int_0^T
			\frac{1}{\delta^2}  \norm{\omega(t)}_{L^1(\Gamma_{\delta})}^2 \, dt\\
		&\le \nu \liminf_{\delta \to 0} \frac{1}{\delta^2} C \frac{\delta}{\nu}
		=  \liminf_{\delta \to 0}  \frac{C}{\delta}.
\end{align*}
In the last inequality we used \refL{BoundaryLayerWidth}. \textbf{Of course, this is BAD!!!}

Using \Holders and Jensen's inequalities it follows that
\begin{align*}
	\pr{\frac{\nu}{T} \int_0^T \norm{\omega}_{L^1(\Gamma)}}^2
		\le \frac{\nu^2}{T} \int_0^T \norm{\omega}_{L^1(\Gamma)}^2
		\le \frac{C \nu}{T},
\end{align*}
completing the proof.
\end{proof}

\begin{remark}
In higher dimensions, we could attempt the same argument using $\grad u$ in place of $\omega$. A problem remains, though, in that  we cannot conclude that $\grad u$ has sufficient space regularity over a finite time interval independent of the viscosity so that $\omega(t)$ is continuous. Weak solutions do have sufficient regularity so that the left-hand side of \refE{BoundaryIntegralLimit} (with $\grad u$ in place of $\omega$) makes sense, but there is no reason to suppose that equality with the right-hand side holds.
\end{remark}
} 

} 

\Ignore{ 
%
%
\section{An alternate derivation of Kato's conditions}\label{S:AlternateDerivation}

\noindent The argument that led to \refE{VVArg} in the proof of \refT{BoundaryIffCondition} is perhaps the first calculation that anyone who ever attempts to establish the vanishing viscosity limit makes. It is simple, direct, and natural. Because we were working in 2D it was easy to make the argument rigorous, but the essential idea is contained in the formal argument.

Kato's introduction of a boundary layer corrector, on the other hand, handles the rigorous proof of the necessity and sufficiency of his conditions in higher dimensions while at the same time striving to give the motivation for those very conditions. In this way, it obscures to some extent the nature of the argument, and appears somewhat unmotivated. That is to say, one can follow the technical details easily enough, but it is hard to see what the plan is at the outset. (Kato uses the energy inequality for the Navier-Stokes equations in a way that avoids treating $w = u - \ol{u}$ as though it were a test function for the Navier-Stokes equations. This now classical technique is clearly explained in Section 2.2 of \cite{IftimieLopeses2009}.)

We give a different derivation below, which starts with \refT{BoundaryIffCondition}. We give the formal argument, which is rigorous in two dimensions if we pay more attention to the regularity of the solutions.

\begin{theorem}
    The condition in \cref{e:KellCondition} is necessary and sufficient for
    ($VV$) to hold.
\end{theorem}
\begin{proof}
Let $v$ be the boundary layer velocity defined by Kato in \cite{Kato1983}, where $\delta = c \nu$: so $v$ is divergence-free, vanishes outside of $\Gamma_{c \nu}$, and $v = \ol{u}$ on $\Gamma$. (In all that follows, one can also refer to \cite{K2006Kato}, which gives Kato's argument using (almost) his same notation.) Since $v = \ol{u}$ on $\Gamma$, by \refT{BoundaryIffCondition}, and using \cref{e:gradunomega}, ($VV$) holds if and only if
\begin{align*}
	\nu \int_0^T \int_\Gamma (\grad u \cdot \mathbf{n}) \cdot v
		= \int_0^T (\nu \Delta u, v) + \nu \int_0^T (\grad u, \grad v)
		\to 0
\end{align*}
as $\nu \to 0$.

Using Lemma A.2 of \cite{K2006Kato},
\begin{align*}
	\nu \int_0^T (\grad u, \grad v)
		&= 2 \nu \int_0^T (\omega(u), \omega(v))
		\le 2 \nu \int_0^T \norm{\omega(u)}_{L^2(\Gamma_{c \nu})} \norm{\omega(v)}_{L^2} \\
		&\le \sqrt{\nu} \norm{\grad v}_{L^2([0, T] \times \Omega)}
			\sqrt{\nu} \norm{\omega(u)}_{L^2([0, T] \times \Gamma_{c \nu})} \\
		&\le C \pr{\nu \int_0^T \norm{\omega(u)}_{L^2(\Gamma_{c \nu})}^2}^{1/2},
\end{align*}
since $\norm{\grad v}_{L^2([0, T] \times \Omega)} \le C \nu^{-1/2}$.

Also,
\begin{align*}
	 \int_0^T (\nu \Delta u, v) 
	 	= \int_0^T \brac{(\prt_t u, v) + (u \cdot \grad u, v) + (\grad p, v) - (f, v)}.
\end{align*}
The integral involving the pressure disappears, while
\begin{align*}
	\int_0^T \abs{(f, v)}
		\le C \nu^{1/2} \int_0^T \norm{f}_{L^2(\Gamma_{c \nu})},
\end{align*}
using the bound on $\norm{v}_{L^\iny([0, T]; L^2)}$ in \cite{Kato1983} (Equation (3.1) of \cite{K2006Kato}). This vanishes with the viscosity since $f$ lies in $L^1([0, T]; L^2)$.

The integral involving $(u \cdot \grad u, v)$ we bound the same way as in \cite{K2006Kato}. Using Lemma A.4 of \cite{K2006Kato},
\begingroup
\allowdisplaybreaks
\begin{align*}
   &\abs{\int_0^t (u \cdot \grad u, v)}
        = 2 \abs{\int_0^t (v, u \cdot \omega(u))}
            \\
       &\qquad
            \le 2 \norm{v}_{L^\iny([0, T] \times \Omega)}
                \int_0^t \norm{u}_{L^2(\Gamma_{c \nu})}
                     \norm{\omega(u)}_{L^2(\Gamma_{c \nu})} \\
       &\qquad
            \le C \nu \int_0^t
                \norm{\grad u}_{L^2(\Gamma_{c \nu})}
                     \norm{\omega(u)}_{L^2(\Gamma_{c \nu})} \\
       &\qquad
            \le C \nu^{1/2}
                \norm{\grad u}_{L^2([0, T]; L^2(\Gamma_{c \nu}))}
                    \nu^{1/2} \norm{\omega(u)}_{L^2([0, T]; L^2(\Gamma_{c \nu}))} \\
       &\qquad
            \le C \pr{\nu \int_0^t
            \norm{\omega(u)}_{L^2(\Gamma_{c \nu})}^2}^{1/2}.
\end{align*}
\endgroup

Finally,
\begin{align*}
	\int_0^T (\prt_t u, v)
		= \int_0^T \int_\Omega \prt_t (u v) + \int_0^T (u, \prt_t v).
\end{align*}
As in \cite{Kato1983},
\begin{align*}
    \abs{\int_0^t (u, \prt_t v)}
       &\le \int_0^t \norm{u}_{L^2(\Omega)} \norm{\prt_t v}_{L^2(\Omega)}
        \le C \nu^{1/2}.
\end{align*}
Also,
\begin{align*}
	\int_0^T \int_\Omega &\prt_t (u v)
		= \int_0^T \diff{}{t} (u, v)
		= (u(T), v(T)) - (u_\nu^0, v(0)) \\
		&\le \norm{u(T)}_{L^2} \norm{v}_{L^2}
			+ \smallnorm{u_\nu^0}_{L^2} \norm{v(0)}_{L^2} \\
		&\le C \smallnorm{u_0}_{L^2} \norm{v}_{L^\iny([0, T]; L^2)}
		\le C \sqrt{\nu}.
\end{align*}

We conclude from all these inequalities that
\begin{align*}
	\nu \int_0^T \norm{\omega(u)}_{L^2(\Gamma_{c \nu})}^2
		\to 0 \text{ as } \nu \to 0
\end{align*}
is a sufficient condition for the vanishing viscosity limit to hold (as, too, is Kato's condition involving $\grad u$ in place of $\omega(u)$). The necessity follows easily from the energy inequality.
\end{proof}
} 

\ifbool{IncludeNavierBCSection}{
%
%
\section{Navier boundary conditions in 2D}\label{S:NavierBCs}

\refT{VorticityNotBounded} says that if the vanishing viscosity limit holds, then there cannot be a uniform (in $\nu$) bound on the $L^2$-norm of the vorticity. This is in stark contrast to the situation in the whole space, where such a bound holds, or for Navier boundary conditions in 2D, where such a bound holds for $L^p$, $p > 2$, as shown in \cite{FLP} and \cite{CMR}. For Navier boundary conditions in 2D, then, as long as the initial vorticity is in $L^p$ for $p > 2$ there will be a uniform bound on the $L^2$-norm of the vorticity, since the domain is bounded.


In fact, for Navier boundary conditions in 2D the classical vanishing viscosity limit ($VV$)
does hold, even for much weaker regularity on the initial velocity than that considered here (see \cite{KNavier}). The argument in the proof of \refT{VorticityNotBounded} then shows that
\begin{align}\label{e:VelocityGammaConvergence}
	u \to \ol{u}
		\text{ in } L^\iny([0, T]; L^2(\Gamma)).
\end{align}

We also have weak$^*$ convergence of the vorticity in $\Cal{M}(\ol{\Omega})$, as we show in \cref{T:VorticityConvergenceNavier}.

\begin{theorem}\label{T:VorticityConvergenceNavier}
Assume that the solutions to $(NS)$ are with Navier boundary conditions in 2D, and that  the initial vorticity $\omega_0 = \ol{\omega}_0$ is in $L^\iny$ (slightly weaker assumptions as in \cite{KNavier} can be made). Then all of the conditions in  \cref{T:VVEquiv} hold, but with the three conditions below replacing conditions $(C)$, $(E)$, and $(E_2)$, respectively:
\begin{align*}
	(C^N) & \qquad \grad u \to \grad \ol{u}
	        \quad
	        \weak^* \text{ in } L^\iny(0, T; \Cal{M}(\ol{\Omega})^{d \times d}), \\
	(E^N) & \qquad \omega \to \ol{\omega}
	        \quad
	        \weak^* \text{ in } L^\iny(0, T; \Cal{M}(\ol{\Omega})^{d \times d}), \\
	(E_2^N) & \qquad \omega \to \ol{\omega} 
	        \quad
	        \weak^* \text{ in } L^\iny(0, T; \Cal{M}(\ol{\Omega})). \\
\end{align*}
\end{theorem}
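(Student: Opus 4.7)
The plan is to leverage three ingredients already at our disposal: (i) the classical vanishing viscosity limit $(VV)$ holds in the Navier setting by \cite{KNavier}, which via \cref{T:VVEquiv} immediately yields conditions $(A)$, $(A')$, $(B)$, $(D)$, $(F)$ and, in 2D with $\Omega$ simply connected, $(F_2)$; (ii) the uniform $L^p$ bound ($p>2$) on $\omega$ from \cite{FLP, CMR}, which on the bounded domain $\Omega$ gives a uniform $L^\infty(0,T;\mathcal{M}(\overline\Omega))$ bound on $\omega$; and (iii) the boundary trace convergence $u \to \ol{u}$ in $L^\infty(0,T;L^2(\Gamma))$ of \cref{e:VelocityGammaConvergence}, which is the essential new information that distinguishes Navier boundary conditions from no-slip.

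First I would establish the analogues of $(C)$, $(E)$, $(E_2)$ but as weak convergence tested against $H^1$ functions (rather than against $C(\overline\Omega)$), showing that the boundary concentrations in the no-slip statements disappear. For $(E_2^N)$ tested on $\varphi \in H^1(\Omega)$, integration by parts gives
\begin{align*}
    (\omega,\varphi) = -(u,\grad^\perp \varphi) + \int_\Gamma (u\cdot\BoldTau)\varphi,
\end{align*}
and the analogous formula for $\ol{\omega}$. Subtracting and letting $\nu \to 0$, the volume term vanishes by $(B)$ and the boundary term by \cref{e:VelocityGammaConvergence}, so $(\omega,\varphi)\to(\ol\omega,\varphi)$ uniformly on $[0,T]$. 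The same strategy works for $(E^N)$ with matrix test functions $M \in (H^1(\Omega))^{d\times d}$, writing
\begin{align*}
    (\omega,M) = -\tfrac{1}{2}(u,\dv(M-M^T)) + \tfrac{1}{2}\int_\Gamma u\cdot((M-M^T)\n),
\end{align*}
and integrating by parts back in the limit; the two boundary contributions now cancel exactly (rather than combining into the measure $\innp{\gamma_\n(\cdot - \cdot^T),\ol{u}\mu}$ as in the no-slip case) precisely because $u \to \ol u$ strongly on $\Gamma$. The argument for $(C^N)$ is identical, using $(\nabla u, M) = -(u,\dv M) + \int_\Gamma u \cdot (M\n)$ for any $M\in (H^1(\Omega))^{d\times d}$.

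Next I would upgrade from weak convergence against $H^1(\Omega)$ to $\mathrm{weak}^*$ convergence in $L^\infty(0,T;\mathcal{M}(\overline\Omega))$ by exactly the density/approximation argument used in \cref{C:EquivConvMeasure}: given $\psi \in C(\overline\Omega)$ and $\epsilon>0$, pick $\varphi \in H^1(\Omega)$ with $\norm{\psi-\varphi}_{C(\overline\Omega)}<\epsilon$; then
\begin{align*}
    \abs{(\omega-\ol\omega,\psi)} \le \abs{(\omega-\ol\omega,\psi-\varphi)} + \abs{(\omega-\ol\omega,\varphi)} \le 2M\epsilon + o(1)
\end{align*}
as $\nu\to 0$, where $M$ is the uniform bound on $\norm{\omega-\ol\omega}_{L^\infty(0,T;\mathcal{M}(\overline\Omega))}$ provided by step (ii). Exactly the same approximation upgrades the matrix-valued statements $(C^N)$ and $(E^N)$; for $(C^N)$ the uniform bound in $\mathcal{M}(\overline\Omega)^{d\times d}$ comes from $\norm{\grad u}_{L^p} \le C\norm{\omega}_{L^p}$ (Yudovich's inequality, as used in the proof of \cref{T:VorticityNotBounded}) together with the $L^p$ bound on $\omega$.

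I expect the only point requiring genuine attention to be the clean separation of boundary and interior terms in the matrix-valued case: for antisymmetric $M$ one needs to be careful that the identity $(\omega,M) = \tfrac{1}{2}(\grad u, M - M^T)$ is used consistently so that when $M$ is replaced by a general (not necessarily antisymmetric) $H^1$ matrix, only the antisymmetric part contributes, matching the formulation of $(E^N)$. Everything else is a routine combination of $(VV)$, the uniform $L^p$ vorticity estimate, the $L^2(\Gamma)$ trace convergence, and the density argument of \cref{C:EquivConvMeasure}.
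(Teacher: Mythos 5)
Your proposal is correct and follows essentially the same route as the paper's proof: integrate by parts to express $(\omega,\varphi)$ and $(\grad u, M)$ in terms of $u$ plus a boundary trace term, pass to the limit using $(A')$ in the interior and the trace convergence $u \to \ol{u}$ in $L^\iny(0,T;L^2(\Gamma))$ on $\Gamma$, and then upgrade to $\weak^*$ convergence in $L^\iny(0,T;\Cal{M}(\ol{\Omega}))$ via the density argument of \cref{C:EquivConvMeasure} using the uniform $L^p$ bound on $\omega$. The only cosmetic difference is that you treat $(E^N)$ directly with antisymmetrized matrix test functions, whereas the paper simply notes that $(E^N)$ is a reformulation of $(E_2^N)$; your explicit remark that the uniform measure bound for $(C^N)$ comes from Yudovich's inequality is a detail the paper leaves implicit.
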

\begin{proof}
First observe that $(E^N)$ is just a reformulation of $(E_2^N)$ with vorticity viewed as a matrix. Also, it is sufficient to prove convergences in $(H^1(\Omega))^*$, using the same argument as in the proof of \cref{C:EquivConvMeasure}, since $\omega$ is bounded in all $L^p$ spaces, including $p = $.

It is shown in \cite{KNavier} that condition $(B)$ holds, from which $(A)$ and $(A')$ follow immediately. Condition $(D)$ is weaker than $(C^N)$ and condition $(F_2)$ is weaker than conditions $(E_2^N)$, so it remains only to show that $(C^N)$ and $(E_2^N)$ hold. We show this by modifying slightly the argument in the proof of  \cref{T:VVEquiv} given in \cite{K2008VVV}.

\medskip

	\noindent $\mathbf{(A') \implies (C^N)}$: Assume that ($A'$) holds and let $M$ be in
	$(H^1(\Omega))^{d \times d}$. Then
	\begin{align*}
		(\grad u, M)
			&= - (u, \dv M) + \int_\Gamma (M \cdot \mathbf{n}) \cdot u \\
			 &\to -(\ol{u}, \dv M) + \int_\Gamma (M \cdot \mathbf{n}) \cdot \ol{u}
					\text{ in } L^\iny([0, T]).
	\end{align*}
	The convergence follows from condition $(A')$ and \refE{VelocityGammaConvergence}.
	But,
		\begin{align*}
			-(\ol{u}, \dv M)
					= (\grad \ol{u}, M)
									- \int_\Gamma (M \cdot \mathbf{n}) \cdot \ol{u},
	\end{align*}
	giving ($C^N$).

	\medskip

	\noindent $\mathbf{(A') \implies (E_2^N)}$: Assume that ($A'$) holds and let $f$ be in
	$H^1(\Omega)$. Then
	\begin{align*}
		(\omega, f)
			&= - (\dv u^\perp, f) 
			= (u^\perp, \grad f) - \int_\Gamma (u^\perp \cdot \mathbf{n}) f \\
			&= - (u, \grad^\perp f) + \int_\Gamma (u \cdot \BoldTau) f \\
			&\to -(\ol{u}, \grad^\perp f) + \int_\Gamma (\ol{u} \cdot \BoldTau) f
					\text{ in } L^\iny([0, T])
	\end{align*}
	where $u^\perp = -\innp{u^2, u^1}$ and we used the identity $\omega(u) = - \dv u^\perp$
	and \refE{VelocityGammaConvergence}.
	But,
		\begin{align*}
			-(\ol{u}, &\grad^\perp f)
					= (\ol{u}^\perp, \grad f) 
					= - (\dv \ol{u}^\perp, f) 
							+ \int_\Gamma (\ol{u}^\perp \cdot \mathbf{n}) f \\
					&= - (\dv \ol{u}^\perp, f) 
							- \int_\Gamma (\ol{u} \cdot \BoldTau) f 
					= (\ol{\omega}, f) 
							- \int_\Gamma (\ol{u} \cdot \BoldTau) f,
	\end{align*}
	giving ($E_2^N$).
\end{proof}

\begin{remark}
If one could show that \refE{VelocityGammaConvergence} holds in dimension three then \refT{VorticityConvergenceNavier} would hold, with convergences in $(H^1(\Omega))^*$, in dimension three as well for initial velocities in $H^{5/2}(\Omega)$. This is because by \cite{IP2006} the vanishing viscosity limit holds for such initial velocities, and the argument in the proof of \refT{VorticityConvergenceNavier} would then carry over to three dimensions by making adaptations similar to those we made to the 2D arguments in \cite{K2008VVV}. Note that \refE{VelocityGammaConvergence} would follow, just as in 2D, from a uniform (in $\nu$) bound on the $L^p$-norm of the vorticity for some $p \ge 2$ if that could be shown to hold, though that seems unlikely.
\end{remark}
} 
{ 
}

\Ignore{ 

%
\section{High friction limit}

\noindent Assume that $\ol{u}$ is a vector field lying in  $L^\iny([0, T]; H)$ and let $u = u^\al$ be a vector field in $L^\iny([0, T]; H \cap H^1(\Omega)$ parameterized by $\al$, where $\al \to \iny$. This is the scenario that occurs in the high friction limit [\textbf{add references}], where $\ol{u}$ (which lies in $L^\iny([0, T]; V) \subseteq L^\iny([0, T]; H)$), a subject that we return to briefly at the end of this section.

Define the conditions
\begin{align*}
	(A_\al) & \qquad u \to \ol{u} \text{ weakly in } H
					\text{ uniformly on } [0, T], \\
	(A'_\al) & \qquad u \to \ol{u} \text{ weakly in } (L^2(\Omega))^d
					\text{ uniformly on } [0, T], \\
	(B_\al) & \qquad u \to \ol{u} \text{ in } L^\iny([0, T]; H), \\
	(C_\al) & \qquad \grad u \to \grad \ol{u}
				\text{ in } ((H^1(\Omega))^{d \times d})^*
					   \text{ uniformly on } [0, T], \\
	(D_\al) & \qquad \grad u \to \grad \ol{u} \text{ in } (H^{-1}(\Omega))^{d \times d}
					   \text{ uniformly on } [0, T], \\
	(E_\al) & \qquad \omega \to \omega(\ol{u})
					\text{ in } 
	 				((H^1(\Omega))^{d \times d})^*
	 				   \text{ uniformly on } [0, T], \\
	(E_{2, \al}) & \qquad \omega \to \omega(\ol{u})
				\text{ in } (H^1(\Omega))^*
					   \text{ uniformly on } [0, T], \\
	(F_{2,  \al}) & \qquad \omega \to \omega(\ol{u}) \text{ in } H^{-1}(\Omega)
					   \text{ uniformly on } [0, T],
\end{align*}
we have the following theorem:
\begin{theorem}\label{T:MainResultal}
	Assume that $u \to \ol{u}$ in $L^\iny([0, t]; L^2(\Gamma))$.
	Conditions ($A_\al$), ($A'_\al$), ($C_\al$), ($D_\al$), and ($E_\al$) are equivalent.
	In two dimensions, conditions ($E_{2, \al}$) and ($F_{2, \al}$) are equivalent to the other conditions
	when $\Omega$ is simply connected.
	Also, $(B_\al)$ implies all of the other conditions. Finally, the same equivalences hold if we replace each
	convergence above with the convergence of a subsequence.
\end{theorem}
\begin{proof}
$\mathbf{(A) \iff (A')}$: Let $v$ be in $(L^2(\Omega))^d$. By Lemma 7.3 of \cite{K2008VVV}, $v = w + \grad p$, where $w$ is in $H$ and $p$ is in $H^1(\Omega)$. Then assuming $(A)$ holds,
\begin{align*}
	(u(t), v)
		&
		= (u(t), w)
		\to (\ol{u}(t), w)
		= (\ol{u}(t), v)
		\end{align*}
uniformly over $t$ in $[0, T]$, so $(A')$ holds. The converse is immediate.

	\medskip

	\noindent $\mathbf{(B) \implies (A)}$:
	This implication is immediate.
	
	\medskip
	
	\noindent $\mathbf{(A') \implies (C)}$: Assume that ($A'$) holds and let $M$ be in
	$(H^1(\Omega))^{d \times d}$. Then
	\begin{align*}
		(\grad &u(t), M)
			= - (u(t), \dv M) + \int_\Gamma (M \cdot \mathbf{n}) u(t) \\
			 &\to -(\ol{u}(t), \dv M) + \int_\Gamma (M \cdot \mathbf{n}) \ol{u} (t)
			 = (\grad \ol{u}(t), M)
					\text{ in } L^\iny([0, T]).
	\end{align*}
	But,
		\begin{align*}
			-(\ol{u}(t), \dv M)
					= (\grad \ol{u}(t), M)
									- \int_\Gamma (M \cdot \mathbf{n}) \cdot \ol{u},
	\end{align*}
	giving ($C$).
	
	\medskip
	
\noindent $\mathbf{(C) \implies (D)}$: This follows simply because $H^1_0(\Omega) \subseteq H^1(\Omega)$.

	\medskip
	

	\medskip
	
	\noindent $\mathbf{(D) \implies (A)}$: Assume ($D$) holds, and let $v$ be
	in $H$. Then $v = \dv M$ for some $M$ in $(H^1_0(\Omega))^{d \times d}$ by
	Corollary 7.5 of \cite{K2008VVV}, so
	\begin{align*}
		(u&(t), v)
			= (u(t), \dv M)
			=  -(\grad u(t), M) + \int_\Gamma (M \cdot \mathbf{n}) \cdot u(t) \\
			& \to -(\grad \ol{u}(t), M) + \int_\Gamma (M \cdot \mathbf{n}) \cdot \ol{u}(t)
			= (\ol{u}(t), \dv M)
				= (\ol{u}(t), v)
	\end{align*}
	uniformly over $[0, T]$.
	from which ($A$) follows.
	
	\medskip
	
	Now assume that $d = 2$.
	
	\medskip

	\noindent $\mathbf{(A') \implies (E_2)}$: Assume that ($A'$) holds and let $f$ be in
	$H^1(\Omega)$. Then
	\begin{align*}
		(\omega(t), f&)
			= - (\dv u^\perp(t), f)
			= (u^\perp(t), \grad f) - \int_\Gamma (u^\perp \cdot \mathbf{n}) f  \\
			&= - (u(t), \grad^\perp f) - \int_\Gamma (u^\perp \cdot \mathbf{n}) f
			\to -(\ol{u}(t), \grad^\perp f) - \int_\Gamma (\ol{u}^\perp \cdot \mathbf{n}) f \\
			&= (\ol{u}^\perp(t), \grad f)   - \int_\Gamma (\ol{u}^\perp \cdot \mathbf{n}) f
			= - (\dv \ol{u}^\perp(t), f)
			= (\ol{\omega}(t), f)
	\end{align*}
	in $L^\iny([0, T])$, giving ($E_2$). Here we used $u^\perp = -\innp{u^2, u^1}$ the identity,
	$\omega(u) = - \dv u^\perp$, and the fact that $\grad^\perp f$ lies in $H$.	
	
	\medskip
	
	\noindent $\mathbf{(E_2) \implies (F_2)}$: Follows for the same reason that
	$(C) \implies (D)$.
	
	\medskip
	
	\noindent $\mathbf{(F_2) \implies (A)}$: Assume ($F_2$) holds, and let $v$ be
	in $H$. Then $v = \grad^\perp f$ for some $f$ in $H^1_0(\Omega)$ ($f$ is called
	the stream function for $v$), and
	\begin{align*}
		(u(t), &v)
			= (u(t), \grad^\perp f)
			= - (u^\perp(t), \grad f)
			= (\dv u^\perp(t), f) - \int_\Gamma (u^\perp(t) \cdot \mathbf{n}) f \\
			&= - (\omega(t), f) - \int_\Gamma (u^\perp(t) \cdot \mathbf{n}) f
			\to - (\ol{\omega}(t), f) - \int_\Gamma (\ol{u}^\perp(t) \cdot \mathbf{n}) f \\
			&= (\dv \ol{u}^\perp(t), f) - \int_\Gamma (u^\perp(t) \cdot \mathbf{n}) f
			= - (\ol{u}^\perp(t), \grad f)
			= (\ol{u}(t), \grad^\perp f) \\
			&= (\ol{u}(t), v)
	\end{align*}
	in $L^\iny([0, T])$, which shows that ($A$) holds.
	
What we have shown so far is that ($A$), ($A'$), ($B$), ($C$), and ($D$) are equivalent, as are $(E_2)$ and $(F_2)$ in two dimensions. It remains to show that $(E)$ is equivalent to these conditions as well. We do this by establishing the implications $(C) \implies (E) \implies (A)$.

\medskip

\noindent $\mathbf{(C) \implies (E)}$: Follows directly from the vorticity being the antisymmetric gradient.

\medskip

\noindent $\mathbf{(E) \implies (A)}$: Let $v$ be in $H$ and let $x$ be the vector field in $(H^2(\Omega) \cap H_0^1(\Omega))^d$ solving $\Delta x = v$ on $\Omega$ ($x$ exists and is unique by standard elliptic theory). Then, utilizing Lemma 7.6 of \cite{K2008VVV} twice (and suppressing the explicit dependence of $u$ and $\ol{u}$ on $t$),
\begin{align}\label{e:EImpliesAEquality}
	\begin{split}
	(u, v)
		&= (u, \Delta x)
		= - (\grad u, \grad x) + \int_\Gamma (\grad x \cdot \mathbf{n}) \cdot u \\
		&= -2 (\omega(u), \omega(x)) - \int_\Gamma (\grad u x) \cdot \mathbf{n} 
			+ \int_\Gamma (\grad x \cdot \mathbf{n}) \cdot u \\
		&= -2 (\omega(u), \omega(x)) + \int_\Gamma (\grad x \cdot \mathbf{n}) \cdot u \\
		&\to -2(\omega(\ol{u}), \omega(x))
				+ \int_\Gamma (\grad x \cdot \mathbf{n}) \cdot \ol{u} \\
		&= -(\grad \ol{u}, \grad x)
				+ \int_\Gamma (\grad \ol{u} x) \cdot \mathbf{n}
				+ \int_\Gamma (\grad x \cdot \mathbf{n}) \cdot \ol{u} \\
		&= -(\grad \ol{u}, \grad x)
				+ \int_\Gamma (\grad x \cdot \mathbf{n}) \cdot \ol{u}
		= (\ol{u}, \Delta x)
		= (\ol{u}, v),
	\end{split}
\end{align}
giving $(A)$.
\end{proof}

In the case of the high friction limit, at least in 2D, $(B_\al)$ holds so all of the conditions hold. This means that the vorticities and gradients converge weakly in the sense of the conditions $(C_\al)$ through $(F_{2, \al})$---convergence that does not include a vortex sheet on the boundary.

} 

\addtocontents{toc}{\protect\vspace{0.6em}}

\appendix
%
%
\section{Some Lemmas}

\noindent \refC{TraceCor}, which we used in the proof of \refT{VorticityNotBounded}, follows from \refL{Trace}.

\begin{lemma}[Trace lemma]\label{L:Trace}
	Let $p \in (1, \iny)$ and $q \in [1, \iny]$ be chosen
	arbitrarily, and let $q'$ be \Holder conjugate to $q$.
	There exists a constant $C = C(\Omega)$
    such that for all $f \in W^{1, p}(\Omega)$,
    \begin{align*}
        \norm{f}_{L^p(\Gamma)}
            \le C \norm{f}_{L^{(p - 1) q}(\Omega)}
            		^{1 - \frac{1}{p}}
                \norm{f}_{W^{1, q'}(\Omega)}
                	^{\frac{1}{p}}.
    \end{align*}
    If $f \in W^{1, p}(\Omega)$ has mean zero or $f \in W^{1, p}_0(\Omega)$ then
    \begin{align*}
        \norm{f}_{L^p(\Gamma)}
            \le C \norm{f}_{L^{(p - 1) q}(\Omega)}
            		^{1 - \frac{1}{p}}
                \norm{\grad f}_{L^{q'}(\Omega)}
                	^{\frac{1}{p}}.
    \end{align*}
\end{lemma}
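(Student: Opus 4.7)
The plan is to convert the boundary integral into volume integrals via the divergence theorem and then to estimate them using \Holders inequality with exponents $q$ and $q'$. Since $\Gamma$ is $C^2$, one can construct a vector field $V \in C^1(\ol{\Omega})^d$ with $V \cdot \n = 1$ on $\Gamma$---for instance by taking $\grad \psi$, where $\psi$ is the signed distance to $\Gamma$ (which is $C^2$ in a tubular neighborhood of $\Gamma$), multiplied by a smooth cutoff and extended by zero.

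For $f \in C^1(\ol{\Omega})$, the divergence theorem then gives
\begin{align*}
    \int_\Gamma \abs{f}^p
        = \int_\Gamma \abs{f}^p V \cdot \n
        = \int_\Omega \dv(\abs{f}^p V)
        = \int_\Omega \brac{p \abs{f}^{p-2} f \, V \cdot \grad f
            + \abs{f}^p \dv V}.
\end{align*}
Bounding $\abs{V}$ and $\abs{\dv V}$ by a constant $C = C(\Omega)$ and applying \Holders inequality to each of the two volume integrals,
\begin{align*}
    \int_\Omega \abs{f}^{p-1} \abs{\grad f}
        &\le \norm{\abs{f}^{p-1}}_{L^q(\Omega)} \norm{\grad f}_{L^{q'}(\Omega)}
        = \norm{f}_{L^{(p-1)q}(\Omega)}^{p-1} \norm{\grad f}_{L^{q'}(\Omega)}, \\
    \int_\Omega \abs{f}^p
        &\le \norm{\abs{f}^{p-1}}_{L^q(\Omega)} \norm{f}_{L^{q'}(\Omega)}
        = \norm{f}_{L^{(p-1)q}(\Omega)}^{p-1} \norm{f}_{L^{q'}(\Omega)}.
\end{align*}
Combining these estimates gives
\begin{align*}
    \norm{f}_{L^p(\Gamma)}^p
        \le C \norm{f}_{L^{(p-1)q}(\Omega)}^{p-1}
            \brac{\norm{\grad f}_{L^{q'}(\Omega)}
                + \norm{f}_{L^{q'}(\Omega)}}
        \le C \norm{f}_{L^{(p-1)q}(\Omega)}^{p-1}
            \norm{f}_{W^{1, q'}(\Omega)},
\end{align*}
and taking the $p$-th root yields the first inequality. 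The general case $f \in W^{1, p}(\Omega)$ follows by density, regularizing $\abs{f}^p$ via $(\abs{f}^2 + \eps^2)^{p/2}$ and passing to the limit $\eps \to 0$ to justify the chain rule.

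For the second inequality, the Poincar\'e inequality applies to $f$ of mean zero on the bounded domain $\Omega$ and to $f \in W^{1, q'}_0(\Omega)$, yielding $\norm{f}_{L^{q'}} \le C \norm{\grad f}_{L^{q'}}$; hence $\norm{f}_{W^{1, q'}} \le C \norm{\grad f}_{L^{q'}}$, and substitution into the first inequality completes the proof. The main technical obstacle is justifying the chain rule for $\abs{f}^p$ when $f$ lies only in $W^{1, p}(\Omega)$---the factor $\abs{f}^{p-2}$ can blow up on the zero set of $f$ when $p \in (1, 2)$---but standard Marcus--Mizel type composition results resolve this without essential difficulty.
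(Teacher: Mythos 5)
Your proposal is correct and follows essentially the same route as the paper: both convert $\int_\Gamma \abs{f}^p$ into a volume integral of $\grad(\abs{f}^p)$ against bounded data (you via the divergence theorem applied to $\abs{f}^p V$ with a $C^1$ extension $V$ of the normal, the paper via the fundamental theorem of calculus in tubular-neighborhood coordinates with a cutoff), then apply \Holders inequality with exponents $q,q'$ to split $\abs{f}^{p-1}\grad f$, and finish the mean-zero/$W^{1,p}_0$ case with the Poincar\'e inequality. The only cosmetic difference is the mechanism for the boundary-to-volume conversion; the paper likewise handles general $f \in W^{1,p}(\Omega)$ by density from smooth functions, so your extra remarks on justifying the chain rule for $\abs{f}^p$ are a harmless refinement rather than a divergence in method.
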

\begin{proof}
We prove this for $f \in C^\iny(\Omega)$, the result following by the density of $C^\iny(\Omega)$ in $W^{1, p}(\Omega)$. We also prove it explicitly in two dimensions, though the proof extends easily to any dimension greater than two.

Let $\Sigma$ be a tubular neighborhood of $\Gamma$ of uniform width $\delta$, where $\delta$ is half of the maximum possible width. Place coordinates $(s, r)$ on $\Sigma$ where $s$ is arc length along $\Gamma$ and $r$ is the distance of a point in $\Sigma$ from $\Gamma$, with negative distances being inside of $\Omega$. Then $r$ ranges from $-\delta$ to $\delta$, with points $(s,0)$ lying on $\Gamma$. Also, because $\Sigma$ is only half the maximum possible width, $\abs{J}$ is bounded from below, where
\begin{align*}
    J = \det \frac{\prt(x, y)}{\prt (s, r)}
\end{align*}
is the Jacobian of the transformation from $(x, y)$ coordinates to $(s, r)$ coordinates.

Let $\varphi \in C^\iny(\Omega)$ equal 1 on $\Gamma$ and equal 0 on $\Omega \setminus \Sigma$. Then if $a$ is the arc length of $\Gamma$,
\begingroup
\allowdisplaybreaks
\begin{align*}
	\norm{f}_{L^p(\Gamma)}^p
		&= \int_0^a \int_{-\delta}^0 \pdx{}{r}
			\brac{(\varphi f)(s, r)}^p \, dr \, ds \\
		&\le \int_0^a \int_{-\delta}^0 \abs{\pdx{}{r}
			\brac{(\varphi f)(s, r)}^p} \, dr \, ds \\
		&\le \int_0^a \int_{-\delta}^0 \abs{\grad
			\brac{(\varphi f)(s, r)}^p} \, dr \, ds \\
		&= \pr{\inf_{\supp \varphi} \abs{J}}^{-1}
			\int_0^a \int_{-\delta}^0 \abs{\grad
			\brac{(\varphi f)(s, r)}^p}
			\inf_{\supp \varphi} \abs{J}
			\, dr \, ds \\
		&\le \pr{\inf_{\supp \varphi} \abs{J}}^{-1}
			\int_0^a \int_{-\delta}^0 \abs{\grad
			\brac{(\varphi f)(s, r)}^p}
			\abs{J}
			\, dr \, ds \\
		&= C
			\int_{\Sigma \cap \Omega} \abs{\grad
			\brac{(\varphi f)(x, y)}^p}
			\, dx \, dy \\
		&\le C
			\norm{\grad \brac{\varphi f}^p}_{L^1(\Omega)} \\
		&= C p
			\norm{(\varphi f)^{p - 1}
			\grad \brac{\varphi f}}_{L^1(\Omega)} \\
		&\le C p
			\norm{(\varphi f)^{p - 1}}_{L^q}
			\norm{\grad \brac{\varphi f}}_{L^{q'}(\Omega)} \\
		&= C p
			\brac{\int_{\Omega}{(\varphi f)^{{(p - 1)} q}}}
				^{\frac{1}{q}}
			\norm{\grad \brac{\varphi f}}_{L^{q'}(\Omega)} \\
		&= C p
			\norm{\varphi f}_{L^{(p - 1) q}(\Omega)}
				^{p - 1}
			\norm{\varphi \grad f + f \grad \varphi}
				_{L^{q'}(\Omega)} \\
		&\le C p
			\norm{f}_{L^{(p - 1) q}(\Omega)}
				^{p - 1}
			\norm{f}
				_{W^{1, q'}(\Omega)}.
\end{align*}
\endgroup
The first inequality then follows from raising both sides to the $\frac{1}{p}$ power and using $p^{1/p} \le e^{1/e}$. The second inequality follows from Poincare's inequality.
\end{proof}

\begin{remark}
    The trace inequality in \refL{Trace} is a folklore result,
    most commonly referenced in the special case where
    $p = q = q' = 2$. We proved it for completeness, since we
    could not find a proof (or even clear statement) in the literature.
    We also note that a simple, but incorrect, proof of it
    (for $p = q = q' = 2$) is
    to apply the \textit{invalid} trace inequality from
    $H^{\frac{1}{2}}(\Omega)$ to $L^2(\Gamma)$ then use
    Sobolev interpolation.
\end{remark}

Note that in \cref{L:Trace} it could be that $(p - 1) q \in (0, 1)$, though in our application of it in \cref{S:LpNormsBlowUp}, via \cref{C:TraceCor}, we have $(p - 1) q = 2$. Also, examining the last step in the proof, we see that for $p = 1$ the lemma reduces to $\norm{f}_{L^1(\Gamma)} \le C \norm{f}_{W^{1, q'}(\Omega)}$, which is not useful.

\begin{cor}\label{C:TraceCor}
	Let $p, q, q'$ be as in \cref{L:Trace}.
    For any $v \in H$,
    \begin{align*}
        \norm{v}_{L^p(\Gamma)}
            \le C \norm{v}_{L^{(p - 1) q}(\Omega)}
            		^{1 - \frac{1}{p}}
                \norm{\grad v}_{L^{q'}(\Omega)}
                	^{\frac{1}{p}}
    \end{align*}
    and for any $v \in V \cap H^2(\Omega)$,
    \begin{align*}
        \norm{\curl v}_{L^p(\Gamma)}
            \le C \norm{\curl v}_{L^{(p - 1) q}(\Omega)}
            		^{1 - \frac{1}{p}}
                \norm{\grad \curl v}_{L^{q'}(\Omega)}
                	^{\frac{1}{p}}.
    \end{align*}
\end{cor}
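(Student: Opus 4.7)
The plan is to reduce both inequalities to the second (mean-zero) version of the Trace Lemma applied componentwise. The whole point is that the vector fields $v$ and $\curl v$ each have components with vanishing integral mean over $\Omega$, which converts the $W^{1,q'}$ bound in the basic trace inequality into the sharper $L^{q'}$ bound on the gradient.

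First I would check the mean-zero condition for $v \in H$. For each component $v^i$, I would write
\begin{align*}
    \int_\Omega v^i \, dx
        = \int_\Omega \dv(x^i v) \, dx - \int_\Omega x^i \dv v \, dx
        = \int_\Gamma x^i (v \cdot \n) \, dS = 0,
\end{align*}
since $\dv v = 0$ in $\Omega$ and $v \cdot \n = 0$ on $\Gamma$. Hence the mean-zero version of \refL{Trace} applies to each $v^i$:
\begin{align*}
    \norm{v^i}_{L^p(\Gamma)}
        \le C \norm{v^i}_{L^{(p-1)q}(\Omega)}^{1 - \frac{1}{p}}
              \norm{\grad v^i}_{L^{q'}(\Omega)}^{\frac{1}{p}}.
\end{align*}
I would then sum over $i$ after raising to the $p$-th power, use the equivalence of the Euclidean and $\ell^p$ norms on $\R^d$, and bound $\norm{v^i}_{L^{(p-1)q}(\Omega)} \le \norm{v}_{L^{(p-1)q}(\Omega)}$ and $\norm{\grad v^i}_{L^{q'}(\Omega)} \le \norm{\grad v}_{L^{q'}(\Omega)}$. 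Taking the $1/p$ power at the end absorbs the dimensional constant into $C$ and yields the first claim.

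For the curl estimate, I would exploit that $v \in V$ gives $v = 0$ on $\Gamma$. Each scalar component of $\curl v$ is of the form $\partial_j v^k - \partial_k v^j$, and for any $j,k$,
\begin{align*}
    \int_\Omega \partial_j v^k \, dx
        = \int_\Gamma v^k n^j \, dS = 0.
\end{align*}
Thus every component of $\curl v$ has vanishing mean on $\Omega$. Since $v \in H^2(\Omega)$, the components of $\curl v$ lie in $W^{1,q'}(\Omega)$ for any $q' \in [1,\infty]$ (modulo the boundedness of $\Omega$), so I can apply the mean-zero form of \refL{Trace} to each scalar component of $\curl v$ and then sum exactly as in the previous paragraph.

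There is essentially no serious obstacle: once the mean-zero conditions are verified, the conclusion follows from \refL{Trace} by a routine component-by-component argument and the equivalence of norms on Euclidean space. The only point requiring any care is the first mean-zero calculation, which depends critically on using both defining conditions of $H$, namely $\dv v = 0$ and $v \cdot \n = 0$; neither alone is enough.
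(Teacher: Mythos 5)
Your proposal is correct and follows essentially the same route as the paper: verify that the components of $v$ (using $\dv v = 0$ together with $v \cdot \n = 0$) and of $\curl v$ (using $v = 0$ on $\Gamma$) have zero mean, then apply the mean-zero case of \refL{Trace} componentwise. The paper's own proof is just a terser version of exactly this argument.
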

\begin{proof}
    If $v \in H$, then
    \begin{align*}
        \int_\Omega v^i
            & = \int_\Omega v \cdot \grad x_i
            = - \int_\Omega \dv v \, x_i
                + \int_\Gamma (v \cdot \n) x_i
            = 0.
    \end{align*}
    If $v \in V$ then
    \begin{align*}
        \int_\Omega \curl v
            &= - \int_\Omega \dv v^\perp
            = - \int_{\prt \Omega} v^\perp \cdot \n
            = 0.
    \end{align*}
    Thus, \refL{Trace} can be applied to $v_1, v_2$, and $\curl v$, giving the result.
\end{proof}

\section*{Acknowledgements} Work on this paper was supported in part by NSF Grants DMS-1212141 and DMS-1009545. The author benefited from several helpful conversations with Claude Bardos, Helena Nussenzveig Lopes, and Milton Lopes Filho that took place during his stay at the Instituto Nacional de Matem\'{a}tica Pura e Aplicada in Rio de Janeiro, Brazil in Spring 2014. He also acknowledges helpful conversations with Gung-Min Gie at the University of Louisville in Summer 2014. \cref{S:SomeConvergence} came about because of a question asked by Claude Bardos when the author gave a talk on \cite{K2008VVV} at Brown University in Fall 2009.

\IfarXivElse{

} 
{ 
\bibliography{Refs}
\bibliographystyle{plain}
}

\end{document}